\colorlet{MyBlue}{DodgerBlue!75!Black}
\colorlet{MyGreen}{DarkGreen!85!Black}
\colorlet{MyBlue}{DodgerBlue!75!Black}
\colorlet{MyGreen}{DarkGreen!85!Black}
\colorlet{MyGray}{White!75!Black}
\colorlet{PrimalColor}{DodgerBlue!40!MidnightBlue}
\colorlet{DualColor}{FireBrick}
\definecolor{chartreuse}{HTML}{288800}
\definecolor{dark_gray}{HTML}{111111}
\definecolor{light_gray}{HTML}{555555}
\definecolor{tomato}{HTML}{FF6347}
\definecolor{snow}{HTML}{F1F1F1}
\definecolor{whiteee}{HTML}{EEEEEC}
\definecolor{c1}{RGB}{238,102,119}
\definecolor{c2}{RGB}{68, 119, 170}
\definecolor{c3}{RGB}{102, 204, 238}
\definecolor{c4}{RGB}{34, 136, 51}
\definecolor{c5}{RGB}{204, 187, 68}
\definecolor{c6}{RGB}{238, 102, 119} %
\definecolor{c7}{RGB}{170, 51, 119}
\definecolor{c8}{RGB}{187, 187, 187}
\definecolor{vibrant1}{HTML}{EE7733} %
\definecolor{vibrant2}{HTML}{0077BB} %
\definecolor{vibrant3}{HTML}{33BBEE} %
\definecolor{vibrant4}{HTML}{EE3377} %
\definecolor{vibrant5}{HTML}{CC3311} %
\definecolor{vibrant6}{HTML}{009988} %
\definecolor{vibrant7}{HTML}{BBBBBB} %
\definecolor{bright1}{HTML}{4477AA} %
\definecolor{bright2}{HTML}{EE6677} %
\definecolor{bright3}{HTML}{228833} %
\definecolor{bright4}{HTML}{CCBB44} %
\definecolor{bright5}{HTML}{66CCEE} %
\definecolor{bright6}{HTML}{AA3377} %
\definecolor{bright7}{HTML}{BBBBBB} %
\definecolor{light_blue}{HTML}{77AADD}
\definecolor{orange}{HTML}{EE8866}
\definecolor{light_yellow}{HTML}{EEDD88}
\definecolor{pink}{HTML}{FFAABB}
\definecolor{light_cyan}{HTML}{99DDFF}
\definecolor{mint}{HTML}{44BB99}
\definecolor{pear}{HTML}{BBCC33}
\definecolor{olive}{HTML}{AAAA00}
\definecolor{pale_grey}{HTML}{DDDDDD}
\definecolor{black}{HTML}{000000}
\pgfplotsset{compat=1.18}
\def\rot{\rotatebox} %
\newcommand{\cmark}{\ding{51}}%
\def\addlegendimage{\csname pgfplots@addlegendimage\endcsname}
\newcommand{\citep}{\cite}
\newcommand{\citet}{\cite}
\numberwithin{equation}{section}		%
\crefname{assumption}{Assumption}{Assumptions}
\theoremstyle{plain}
\newtheorem{thm}{Theorem}[section]
\newtheorem{theorem}[thm]{Theorem}
\newtheorem{lemma}[thm]{Lemma}
\newtheorem{corollary}[thm]{Corollary}
\newtheorem{proposition}[thm]{Proposition}
\newtheorem*{claim*}{Claim}
\newtheorem*{tarskisProblem*}{Tarski's Problem (approximately)}
\newtheorem{smallsetstheorem*}[thm]{Small Sets Theorem}
\newtheorem{localOminimality*}[thm]{Local o-minimality}
\newtheorem{definableChoice*}[thm]{Definable Choice}
\newtheorem{dimensionTheorem*}[thm]{Dimension Theorem}
\newtheorem{tarskiSeidenberg*}[thm]{Tarski-Seidenberg Theorem}
\newtheorem{monotonicityTheorem*}[thm]{Monotonicity Theorem}
\newtheorem{smoothMonotonicityTheorem*}[thm]{Smooth Monotonicity Theorem}
\newtheorem{exponentialDichotomy*}[thm]{Exponential dichotomy}
\newtheorem{smallfrontiertheorem*}[thm]{Small Frontier Theorem}
\newtheorem{projectionFormula*}[thm]{Projection formula}
\newtheorem{chainRule*}[thm]{Chain rule}
\theoremstyle{definition}
\newtheorem{definition}[thm]{Definition}
\theoremstyle{remark}
\newtheorem{remark}[thm]{Remark}
\newtheorem{example}[thm]{Example}
\newtheorem{nonExample}[thm]{Non-example}
\newtheorem{openQuestion}[thm]{Open Question}
\crefname{property}{property}{properties}   %
\crefname{nonExample}{Non-Example}{Non-Examples}
\numberwithin{equation}{section}						%
\numberwithin{thm}{section}						%
\newcommand{\debug}[1]{#1}		%
\newcommand{\newmacro}[2]{\newcommand{#1}{\debug{#2}}}		%
\newcommand{\newop}[2]{\DeclareMathOperator{#1}{\debug{#2}}}		%
\newmacro{\defeq}{\triangleq}
\newmacro{\eqdef}{\triangleq}
\DeclarePairedDelimiter{\clip}{[}{]}		%
\DeclarePairedDelimiterX{\setdef}[2]{\{}{\}}{#1:#2}		%
\DeclarePairedDelimiterXPP{\exclude}[1]{\mathopen{}\,\backslash}{\{}{\}}{}{#1}
\newcommand{\eg}{e.g.,}		%
\newcommand{\ie}{i.e.,}		%
\newcommand{\textpar}[1]{\textup(#1\textup)}		%
\newcommand{\alt}[1]{#1'}		%
\newcommand{\N}{\mathbb{N}}		%
\newcommand{\Q}{\mathbb{Q}}		%
\newcommand{\R}{\mathbb{R}}		%
\newcommand{\bbR}{\mathbb{R}}		%
\newcommand{\bbN}{\mathbb{N}}		%
\providecommand{\C}{C}
\newmacro{\Ostr}{\mathcal{R}}                      %
\newmacro{\Rsemialg}{$\Ostr_{\mathrm{alg}}$}                %
\newmacro{\Ran}{$\Ostr_{\mathrm{an}}$}        %
\newmacro{\Rexp}{$\Ostr_{\mathrm{exp}}$}      %
\newmacro{\Ranexp}{$\Ostr_{\mathrm{an,exp}}$} %
\newmacro{\Pff}{\mathrm{Pff}} %
\newmacro{\omin}{o-minimal} %
\newmacro{\Omin}{O-minimal} %
\newmacro{\dd}{\:d}		%
\newcommand{\eps}{\varepsilon}		%
\renewcommand{\epsilon}{\varepsilon}		%
\newop{\dom}{dom}		%
\newmacro{\set}{\mathcal{S}}		%
\newmacro{\points}{\mathcal{K}}		%
\newmacro{\point}{x}		%
\newmacro{\pointalt}{\alt\point}		%
\newmacro{\dpoints}{\mathcal{Y}}		%
\newmacro{\dpoint}{y}		%
\newmacro{\dpointalt}{\alt\dpoint}		%
\newmacro{\base}{p}		%
\newmacro{\basealt}{q}		%
\DeclareMathOperator{\dist}{dist}		%
\newmacro{\open}{\mathcal{U}}           %
\newmacro{\cpt}{\mathcal{K}}            %
\newmacro{\U}{U}                        %
\newmacro{\start}{1}		%
\newmacro{\running}{1,2,\dotsc}		%
\newmacro{\run}{t}		%
\newmacro{\runalt}{s}		%
\newmacro{\nRuns}{T}		%
\newmacro{\runtime}{S}		%
\newmacro{\runs}{\mathcal{\nRuns}}		%
\newmacro{\vecspace}{\mathcal{X}}		%
\newmacro{\vdim}{n}		%
\newmacro{\vvec}{v}		%
\newmacro{\bvec}{e}		%
\newmacro{\unitvec}{z}		%
\newmacro{\subspace}{\mathcal{W}}		%
\newmacro{\subdim}{m}		%
\newmacro{\tanspace}{\mathcal{Z}}		%
\newmacro{\tvec}{z}		%
\DeclarePairedDelimiterX{\braket}[2]{\langle}{\rangle}{#1,#2}		%
\newmacro{\dspace}{\vecspace^{\ast}}		%
\newmacro{\dvec}{w}		%
\newmacro{\dbvec}{\eps}		%
\newmacro{\ones}{\mathbf{1}}                     %
\newmacro{\mat}{A}                               %
\newmacro{\eye}{I}                               %
\newop{\Tr}{Tr}
\newmacro{\cvx}{\mathcal{C}}      %
\newmacro{\subd}{\partial}        %
\newmacro{\strong}{\alpha}        %
\newmacro{\smooth}{\beta}         %
\newmacro{\obj}{f}		%
\newmacro{\objalt}{g}		%
\newmacro{\sobj}{F}		%
\newmacro{\gvec}{g}		%
\newmacro{\gbound}{G}		%
\newmacro{\param}{\theta}		%
\newmacro{\params}{\Theta}		%
\newmacro{\oper}{A}		%
\newmacro{\vecfield}{V}		%
\newmacro{\vbound}{L}		%
\newmacro{\M}{M}		%
\newmacro{\Malt}{{\mathcal{M}}'}		%
\newmacro{\Mcol}{\mathcal{W}}		%
\newmacro{\X}{\mathcal{X}}		%
\newmacro{\Y}{\mathcal{Y}}		%
\newmacro{\vx}{x}
\newmacro{\vy}{y}
\newmacro{\vxalt}{x'}
\newmacro{\vyalt}{y'}
\newmacro{\vxman}{\vx^{\M}}
\newmacro{\vn}{v_n}
\newmacro{\vnalt}{\hat{v}_n}
\newmacro{\vsg}{v} %
\newmacro{\tangentBundle}{T\mathcal{B}}
\newcommand{\tangent}[2]{\debug{T}_{#2} (#1)}
\newcommand{\normal}[2]{\debug{N}_{#2} (#1)}
\newcommand{\tangentM}[1][\vx]{\tangent{#1}{\M}}
\newcommand{\normalM}[1][\vx]{\normal{#1}{\M}}
\newmacro{\maneq}{h}
\newmacro{\smoothcurve}{  c}
\newmacro{\geocurve}{  \gamma}
\newmacro{\grad}{\nabla_R}
\newmacro{\Hess}{  \operatorname{Hess}}
\newmacro{\D}{  \operatorname{D}}           %
\newmacro{\Jac}{  \operatorname{Jac}}           %
\newmacro{\fun}{F}   %
\newmacro{\funalt}{\tilde{F}}   %
\newmacro{\funman}{f} %
\newmacro{\funs}{f}   %
\newmacro{\funns}{f}  %
\newmacro{\funcomp}{F}  %
\newmacro{\mapping}{c}
\newop{\proj}{proj}
\newop{\prox}{prox}
\newop{\ri}{ri}
\newop{\Conv}{Conv}
\newop{\Aff}{Aff}
\newop{\Par}{Par}
\newop{\Diag}{Diag}
\newop{\diag}{diag}
\newop{\trace}{trace}
\newop{\bnd}{bnd}
\newop{\rbd}{rbd}
\newmacro{\ball}{{B}}
\newmacro{\ndim}{n}
\newmacro{\inputSpace}{\bbR^{\ndim}}
\newmacro{\ndimalt}{m}
\newmacro{\Rnalt}{\bbR^{\ndimalt}}
\newmacro{\interSpace}{\Rnalt}
\newmacro{\vxinter}{\vy}
\newmacro{\Minter}{\M^{\funns}}
\newmacro{\Mopt}{\opt[\M]}
\newmacro{\Mr}{\M^{\lambda_{\max}}_{\mult}}
\newmacro{\MI}{\M^{\max}_I}
\newmacro{\Mell}{\M_{I}^{\ell_{1}}}
\newmacro{\lammax}{\lambda_{\max{}}}
\newmacro{\mult}{r}
\newcommand{\opt}[1][\vx]{\debug{{#1}^\star}}		%
\newcommand{\crit}[1][\vx]{\debug{\bar{#1}}}				%
\newmacro{\const}{C}
\newmacro{\constalt}{C'}
\newmacro{\constcurve}{\tilde{L}}
\newmacro{\conststep}{C''}
\newmacro{\Tcurve}{T}
\newop{\Ima}{Im}
\DeclareMathOperator{\Kera}{Ker}
\renewcommand{\ker}{\Kera}
\newmacro{\cm}{ e}
\newmacro{\stepLowBnd}{\varphi}
\newmacro{\stepUpBnd}{\Gamma}
\newmacro{\stepLow}{\underaccent{\bar}{\step}}
\newmacro{\stepUp}{\bar{\step}}
\newmacro{\Mcodim}{p}
\newmacro{\maneqSpace}{\bbR^{\Mcodim}}
\newmacro{\dSQP}{d^{\mathrm{SQP}}}
\newmacro{\currdSQP}{d_{\ite}^{\mathrm{SQP}}(\curr[\vx])}
\newmacro{\dSQPt}{d^{\mathrm{SQP}}_{t}}
\newmacro{\dSQPn}{d^{\mathrm{SQP}}_{n}}
\newmacro{\dSQPr}{d^{\mathrm{SQP}}_{r}}
\newmacro{\gred}{\grad_{\M} \funcomp}
\newmacro{\redg}{\grad_{\M} \funcomp}
\newmacro{\redH}{\Hess_{\M} \funcomp}
\newmacro{\Z}{Z^{-}}
\newmacro{\Zt}{Z^{-\top}}
\newmacro{\hessLag}{\nabla^2_{xx} L}
\newmacro{\dcorr}{d^{\mathrm{corr}}}
\newmacro{\Lagmult}{\lambda}
\newmacro{\lsstep}{\alpha}
\newmacro{\lsArmijoParam}{m}
\newmacro{\Msqp}{M}
\newmacro{\stepinit}{\step_{\text{init}}}
\newmacro{\minmax}{\Phi}		%
\newmacro{\sadobj}{\minmax}		%
\newmacro{\minvar}{\point_{1}}		%
\newmacro{\minvaralt}{\alt\minvar}		%
\newmacro{\minvars}{\points_{1}}		%
\newmacro{\maxvar}{\point_{2}}		%
\newmacro{\maxvars}{\points_{2}}		%
\newmacro{\maxvaralt}{\alt\maxvar}		%
\newmacro{\play}{i}		%
\newmacro{\playalt}{j}		%
\newmacro{\nPlayers}{N}		%
\newmacro{\players}{\mathcal{\nPlayers}}		%
\newmacro{\pure}{a}		%
\newmacro{\purealt}{a'}		%
\newmacro{\nPures}{n}		%
\newmacro{\pures}{\mathcal{A}}		%
\newmacro{\cost}{c}		%
\newmacro{\loss}{\ell}		%
\newmacro{\pay}{u}		%
\newmacro{\payv}{v}		%
\newmacro{\pot}{F}		%
\newmacro{\game}{\mathcal{G}}		%
\newmacro{\gamefull}{\game(\players,\points,\pay)}		%
\newmacro{\fingame}{\Gamma}		%
\newmacro{\fingamefull}{\Gamma(\players,\pures,\pay)}		%
\newmacro{\mixgame}{\simplex(\fingame)}		%
\newmacro{\corstrat}{\pi}		%
\newmacro{\corprob}{\chi}		%
\newmacro{\cormarg}{\point}		%
\newmacro{\corprobs}{\points_{\mathrm{c}}}
\DeclareMathOperator{\ex}{\mathbb{E}}		%
\DeclareMathOperator{\prob}{\mathbb{P}}		%
\DeclareMathOperator{\simplex}{\Delta}		%
\newmacro{\sample}{\omega}		%
\newmacro{\samples}{\Omega}		%
\newmacro{\filter}{\mathcal{F}}		%
\newmacro{\probspace}{(\samples,\filter,\prob)}		%
\newmacro{\mean}{\mu}		%
\newmacro{\sdev}{\sigma}		%
\newmacro{\variance}{\sdev^{2}}		%
\newmacro{\dkl}{D_{\mathrm{KL}}}		%
\newmacro{\as}{\textpar{a.s.}\xspace}		%
\DeclarePairedDelimiterXPP{\exof}[1]{\ex}{[}{]}{}{%
 #1}
\DeclarePairedDelimiterXPP{\probof}[1]{\prob}{(}{)}{}{%
 #1}
\newmacro{\hreg}{h}		%
\newmacro{\breg}{D}		%
\newmacro{\proxmap}{P}		%
\newmacro{\mirror}{Q}		%
\newmacro{\fench}{F}		%
\newmacro{\hstr}{K}		%
\newmacro{\depth}{H}		%
\newmacro{\radius}{R}		%
\newmacro{\zone}{\mathbb{D}}		%
\newmacro{\subpoints}{\points^{\circ}}		%
\newmacro{\state}{X}		%
\newmacro{\dstate}{Y}		%
\newmacro{\signal}{V}		%
\newmacro{\step}{\gamma}		%
\newmacro{\learn}{\eta}		%
\newmacro{\ite}{k}
\newmacro{\initite}{0}
\newmacro{\afterinitite}{1}
\newmacro{\sumite}{n}
\newmacro{\Afterite}{K}
\newcommand{\curr}[1][\vx]{\debug{#1_{\ite}}}				%
\newmacro{\Graph}{\mathcal{G}}
\newmacro{\vertices}{\mathcal{V}}
\newmacro{\edges}{\mathcal{E}}
\newmacro{\ReLU}{\mathsf{ReLU}}		%
\newmacro{\smoothDeg}{p}
\newmacro{\stratDeg}{p}
\newmacro{\manDim}{d}
\newmacro{\spspDist}{\Delta}
\newmacro{\vecspDist}{\dist}
\newmacro{\fdef}{$f:\bbR^{\ndim}\to\bbR\cup\{+\infty\}$}
\newop{\epi}{epi} %
\newop{\gph}{gph} %
\newmacro{\AD}{\textbf{AD}}
\newmacro{\VC}{\text{VC}}
\colorlet{PrimalColor}{DodgerBlue!40!MidnightBlue}
\colorlet{DualColor}{FireBrick}
\definecolor{chartreuse}{HTML}{288800}
\definecolor{dark_gray}{HTML}{111111}
\definecolor{light_gray}{HTML}{555555}
\definecolor{tomato}{HTML}{FF6347}
\definecolor{snow}{HTML}{F1F1F1}
\definecolor{whiteee}{HTML}{EEEEEC}
\definecolor{c1}{RGB}{238,102,119}
\definecolor{c2}{RGB}{40,90,140}
\definecolor{c3}{RGB}{0,120,170}
\definecolor{c4}{RGB}{0,110,100}
\definecolor{c5}{RGB}{20,110,40}
\definecolor{c6}{RGB}{120,150,40}
\definecolor{c7}{RGB}{200,160,40}
\definecolor{c8}{RGB}{210,120,30}
\definecolor{c9}{RGB}{140,40,110}
\definecolor{c10}{RGB}{100,20,70}
\definecolor{c11}{RGB}{70,70,70}
\definecolor{c12}{RGB}{120,120,120}
\definecolor{c13}{RGB}{40,30,110}
\definecolor{bright_blue}{HTML}{4477AA}   %
\definecolor{bright_red}{HTML}{EE6677}    %
\definecolor{bright_green}{HTML}{228833}    %
\definecolor{bright_yellow}{HTML}{CCBB44} %
\definecolor{bright_cyan}{HTML}{66CCEE}   %
\definecolor{bright_purple}{HTML}{AA3377} %
\definecolor{bright_grey}{HTML}{BBBBBB}   %
\definecolor{vibrant_orange}{HTML}{EE7733}  %
\definecolor{vibrant_blue}{HTML}{0077BB}    %
\definecolor{vibrant_cyan}{HTML}{33BBEE}    %
\definecolor{vibrant_magenta}{HTML}{EE3377} %
\definecolor{vibrant_red}{HTML}{CC3311}     %
\definecolor{vibrant_teal}{HTML}{009988}    %
\definecolor{vibrant_grey}{HTML}{BBBBBB}    %
\definecolor{light_blue}{HTML}{77AADD}
\definecolor{orange}{HTML}{EE8866}
\definecolor{light_yellow}{HTML}{EEDD88}
\definecolor{pink}{HTML}{FFAABB}
\definecolor{light_cyan}{HTML}{99DDFF}
\definecolor{mint}{HTML}{44BB99}
\definecolor{pear}{HTML}{BBCC33}
\definecolor{olive}{HTML}{AAAA00}
\definecolor{pale_grey}{HTML}{DDDDDD}
\definecolor{black}{HTML}{000000}
  \def\bbR{{R}}%
  \def\Rsemialg{Ralg}%
  \def\Ran{Ran}%
  \def\Rexp{Rexp}%
  \def\Ranexp{Ranexp}%
\newmacro{\Whcusp}{W^\text{cusp}}
\newmacro{\fnegabs}{f^{1}}
\newmacro{\fsubfail}{f^{2}}
\newcommand{\proofof}[1]{\emph{Proof of #1.}}
\newmacro{\xa}{x'}
\newmacro{\Xa}{\X'}
\newmacro{\ya}{y'}
\newmacro{\Ya}{\Y'}
\newmacro{\stra}{strong-$(a)$}
\newmacro{\nsfun}{f}
\newmacro{\Cell}{\mathsf{Cell}^r}
\newmacro{\Adj}{\mathsf{Adj}^r}
\newmacro{\wHa}{\mathsf{wH}_{a}}
\newmacro{\wHagen}{\mathsf{wH}_{a}^{\operatorname{gen}}}
\newmacro{\vEr}{\mathsf{vEr}}
\newmacro{\vErgen}{\mathsf{vEr}^{\operatorname{gen}}}
\newmacro{\stratif}{\mathfrak{S}}
\newmacro{\linearmap}{A}
\title{Deep learning as the disciplined construction of tame objects}
\author
[G.~Bareilles]
{Gilles Bareilles$^{1}$*}
\address{\scriptsize$^{1}$CMAP, École polytechnique, Institut Polytechnique de Paris, Palaiseau, France. Work done while at CTU.}
\email{gilles.bareilles@polytechnique.edu}
\author[A.~Gehret]{Allen Gehret$^{2, 3}$*}
\address{\scriptsize$^{2}$Czech Technical University in Prague, Artificial Intelligence Center, Charles Square 13, Prague 2, Czech Republic}
\address{\scriptsize$^{3}$Universit\"{a}t Wien, Institut f\"{u}r Mathematik, Kurt G\"{o}del Research Center, Kolingasse 14--16, 1090 Wien, Austria}
\email{gehreall@fel.cvut.cz}
\address{\scriptsize*Equal contribution}
\author
[J.~Aspman]
{Johannes Aspman$^2$}
\author
[J. Lep\v{s}ov\'a]
{Jana Lep\v{s}ov\'a$^2$}
\author
[J.~Mare\v{c}ek]
{Jakub Mare\v{c}ek$^2$}
\begin{document}

\begin{abstract}
  One can see deep-learning models as compositions of functions within the so-called tame geometry.
In this expository note, we give an overview of some topics at the interface of \emph{tame geometry} (also known as \emph{o-minimality}), \emph{optimization theory}, and \emph{deep learning} theory and practice.
To do so, we gradually introduce the concepts and tools used to build \emph{convergence guarantees} for stochastic gradient descent in a general nonsmooth nonconvex, but tame, setting.
This illustrates some ways in which tame geometry is a natural mathematical framework for the study of AI systems, especially within Deep Learning. %

\end{abstract}

\maketitle

\section{Introduction}

\subsection{Context: Frameworks for studying AI systems and Deep Learning}

The rapid evolution of Artificial Intelligence (AI) systems has allowed these systems to be applied across many fields, including high-risk applications such as credit-scoring \cite{bhatoreMachineLearningTechniques2020}, recidivism forecasting \cite{angwin2022machine}, and self-driving vehicles \cite{bachuteAutonomousDrivingArchitectures2021}.
While these advances promise social benefits, they also raise pressing concerns regarding the reliability, interpretability, fairness, and safety of systems that are both novel and deployed at scale.
Consequently, there is a growing consensus among scholars, policymakers, and industry stakeholders for the establishment of robust regulatory frameworks and standardized evaluation protocols to ensure a responsible and trustworthy AI deployment  \cite{dilhacMontrealDeclarationResponsible2018,europeanparliamentArtificialIntelligenceAct2024}.
To that end, a number of road-maps have been suggested in order to align AI systems with human preferences; see \eg{} \cite{russellHumanCompatibleArtificial2019,hoangFabuleuxChantierRendre2021} among others.

\medskip\noindent
This prompts the question: what is a good theoretical framework to provide guarantees on current AI systems, or more precisely, on Deep Learning objects?
Let us be specific: by ``good framework'', we mean a framework that is both \emph{realistic}, in that it encompasses relevant applications, such as current AI systems, and \emph{prolific}, in that it allows one to develop nontrivial theoretical guarantees that are relevant in practice.

\medskip\noindent
One such widespread framework is convex analysis and close variants \cite{nesterovLecturesConvexOptimization2018,hiriart-urrutyFundamentalsConvexAnalysis2001,rockafellar2009variational}.
Within convex analysis, numerous optimization schemes have been designed and analyzed, in terms of where they converge to, and at what speed; see \eg{} \cite{bottouOptimizationMethodsLargeScale2018,bachLearningTheoryFirst2024}.
Yet, these theories reach their limits with deep learning theory, where objects typically do not satisfy convexity, or close variants at every point.

\medskip\noindent
In this paper, we discuss how an interesting candidate stems from the interface of \emph{tame geometry}, or more precisely, \emph{o-minimality},\footnote{Here, we will not distinguish between \emph{tame geometry} and \emph{o-minimality}, we will often abbreviate \emph{definable in an o-minimal structure} to just \emph{definable}, and moreover use \emph{tame} as a synonym for \emph{definable} (in an o-minimal structure); we clarify these conventions in \Cref{sec:defomin}. The interested reader should be aware that this usage is part of a wider usage of \emph{tame} in the sense of Shelah's \emph{dividing lines} in model theory~\cite{shelah1990classification,forkingAndDividing} as well as a similar usage of \emph{tame} in the sense of \emph{tame expansions of the real field} in the work of Miller, Hieronymi and others~\cite{miller2005tameness,Hieronymi2025Tameness}.
} with \emph{optimization theory}, and \emph{deep learning}, both in terms of how realistic and prolific they are.
Let us detail these two aspects.

\subsection{A realistic framework}

Definability in o-minimal structures covers nearly all functions and problems occurring in Deep Learning theory and practice.
It does so by providing \emph{composability guarantees}: the objects built from definable \emph{elementary constituents} according to certain \emph{composition rules} will remain definable.

\medskip\noindent
The list of \emph{composition rules} that preserve definability is wide.
To quote
Ioffe~\cite{Ioffe08},
\begin{quote}
  \emph{...the class [of definable sets and functions] is closed with respect to practically all operations used in variational analysis...}
\end{quote}
In less technical terms, definability is preserved by almost all common operations in optimization, such as composition of functions, (partial) minimization, construction of dual objects, (sub)differentiation, \emph{etc}.
This feature follows directly from the first-order logic background of such structures, as discussed in \Cref{sec:semialg,sec:defomin}.
One notable exception is that solutions to differential equations may not remain definable in some o-minimal structure.

\medskip\noindent
The list of \emph{elementary constituents} which are definable in o-minimal structures is also wide.
In particular, it involves nearly all activation functions and losses that are commonly used in Deep Learning.
\Cref{table:definable} provides an overview of common activation functions and losses, and whether they are definable in three specific o-minimal structures, $\R_{\operatorname{alg}}$, $\R_{\operatorname{exp}}$, and $\R_{\operatorname{Pfaff}}$, defined in \Cref{sec:examples}.
\Cref{400_AFs_remark} provides a more in-depth discussion on the definability of activation functions in Deep Learning.

\medskip\noindent
For these two reasons, nearly every object that appears in Deep Learning theory is definable in some o-minimal structure.
This observation has been made a number of times, see \eg{} \cite{tressl2010introduction,davis2020stochastic} to cite just a few.
Note that many of the popular theoretical frameworks used to analyze Deep Learning objects fail to account for feedforward ReLU networks, arguably the simplest networks.
Indeed, the vast majority of approaches developed to study optimization schemes require the functions to be either differentiable with a (globally) Lipschitz gradient, or to feature some form of convexity such as strong convexity, convexity, weak convexity, prox-regularity, or at the very least Clarke regularity, in order of decreasing strength; see \eg{} \cite{rockafellar2009variational,clarkeOptimizationNonsmoothAnalysis1990}.
Yet, all these assumptions fail for the function $x \mapsto -\operatorname{ReLU}(x) = \min(0, -x)$ at zero.
This function can be seen as a ReLU Network with depth one; it reveals the nature of the difficulty of nonsmooth nonconvex optimization.
These nonsmooth nonconvex features are also present in higher-dimensional and more complex networks; yet they are (nearly) always definable in some o-minimal structure.

\medskip\noindent
Note that many theoretical frameworks come with \emph{composability guarantees}.
One popular such framework is convexity theory; for instance, adding two convex functions, or taking the indicator function of a convex set always maintains convexity, while subtracting two convex functions does not.
Here again, building functions and sets from convex objects and according to a list of prescribed composition rules, in a ``disciplined way'', is useful.
It provides objects that are convex,
and which are in a form suitable for optimization; see the disciplined convex programming line of work \cite{grantDisciplinedConvexProgramming2006}.

\medskip\noindent
To fix ideas, let us consider the example of training a simple Deep Neural Network.
Setting the stage, assume that a dataset of $N$ observations $D = \{ (x_i, y_i), i = 1, ..., N \}$ is available, and consider the task of finding a function $f_{\theta}$ such that $f_{\theta}(x_{i}) \approx y_{i}$.
The function $f_\theta$ is a \emph{Deep Neural Network} (DNN) of depth $L$ defined recursively, for some input $x$, as
\begin{equation}\label{eq:defDNN}
  a_0=x, \qquad a_i=\rho_i(V_i(\theta)a_{i-1}) ~~\forall i=1,\ldots,L,\qquad f_{\theta}(x)= a_{L},
\end{equation}
where
$V_i(\cdot)$ are linear maps into the space of matrices, and
$\rho_i$ are activation functions applied coordinate-wise.
The learning task, known as empirical risk minimization (ERM), is typically formulated as solving
\begin{equation}\label{eq:learnpb}\tag{ERM}
    \min_{\theta\in\bbR^n} \frac{1}{N} \sum_{i=1}^N \ell( f_\theta(x_i) , y_i),
\end{equation}
where
$\ell(\cdot,\cdot)$ is a loss function.
\Cref{table:definable} lists common activation and loss functions.
\Cref{eq:defDNN,eq:learnpb} illustrate that DNNs and the corresponding learning problem are obtained by minimization and composition of elementary constituents, including activation functions, loss functions, and operations such as sum and product.

\begin{table}[t]
  \centering
  \caption{Commonly used loss and activation functions and their
    definability in the o-minimal structures $\R_{\operatorname{alg}}$,
    $\R_{\operatorname{exp}}$ and $\R_{\operatorname{Pfaff}}$, defined in \Cref{sec:examples}.
    Here $\beta > 0$, $\operatorname{erf}(t) = \tfrac{2}{\sqrt{\pi}}\int_{0}^{t} e^{-s^2} ds$
    is the Gaussian error function.
    The symbol (\cmark) indicates that the property is inherited from the inclusion $\R_{\operatorname{alg}}\subset \R_{\operatorname{exp}}\subset\R_{\operatorname{Pfaff}}$. See \Cref{rem:table_semialgebraic}, \Cref{cor:table_exp} and \Cref{cor:table_pfaff}
    for details.
}
  \label{table:definable}
  \small
  \resizebox{1.0\textwidth}{!}{
    \begin{tabular}{cll|ccc}
      \toprule
      & Name & Mathematical formulation & $\R_{\operatorname{alg}}$ & $\R_{\operatorname{exp}}$ & $\R_{\operatorname{Pfaff}}$ \\
      \midrule
      \multirow{10}{*}{\rot{90}{Activation function}}
      & ReLU & $\rho(t) = \max(0,t)$ & \cmark & (\cmark) & (\cmark) \\
      & Softsign & $\rho(t) = \frac{t}{|t|+1}$ & \cmark & (\cmark) & (\cmark) \\
      & Logistic & $\rho(t) = \frac{1}{1+e^{-t}}$ & - & \cmark & (\cmark) \\
      & Hyperbolic tangent & $\rho(t) = \tanh(t)$ & - & \cmark & (\cmark) \\
      & Softplus & $\rho(t) = \log(1+e^t)$ & - & \cmark & (\cmark) \\
      & Swish & $\rho(t) = \tfrac{t}{1+e^{-\beta t}}$ & - & \cmark & (\cmark) \\
      & Mish & $\rho(t) = t\tanh(\log(1+e^t))$ & - & \cmark & (\cmark) \\
      & Exponential linear unit (ELU) & $\rho(t) = \begin{cases}
        e^t - 1, & \text{if } t \leq 0 \\
        t, & \text{if } t > 0
      \end{cases}$ & - & \cmark & (\cmark) \\
      & Gaussian error linear unit (GELU) & $\rho(t) = \tfrac{t}{2}(1+\operatorname{erf}(\tfrac{t}{\sqrt{2}}))$ & - & - & \cmark \\
      & Arctan & $\rho(t) = \arctan(t)$ & - & - & \cmark \\
      \midrule
      \multirow{7}{*}{\rot{90}{Loss function}}
      & Squared error  & $\ell(y,z) = (y-z)^2$ & \cmark & (\cmark) & (\cmark) \\
      & Absolute deviation  & $\ell(y,z) = |y-z|$ & \cmark & (\cmark) & (\cmark) \\
      & Hinge  & $\ell(y,z) = \max\{0,1-yz\}$ & \cmark & (\cmark) & (\cmark) \\
      & Huber & $\ell(y,z) = \begin{cases}
        \tfrac{1}{2}(y-z)^2, & \text{if } |y-z|\leq\beta  \\
        \beta(|y-z|-\tfrac{\beta}{2}), & \text{if }|y-z|>\beta
      \end{cases}$ & \cmark & (\cmark) & (\cmark) \\
      & Logistic  & $\ell(y,z) = \log(1+e^{-yz})$ & - & \cmark & (\cmark) \\
      & Binary cross entropy & $\ell(y,z) = -(y\log z +(1-y)\log(1-z))$& - & \cmark & (\cmark) \\
      \bottomrule
    \end{tabular}
  }
\end{table}

\subsection{A prolific framework}

We argued above that many of the popular theoretical frameworks are not broad enough to account for practical functions such as ReLU-based networks.
We argue here that they are also too broad, in that they often include ill-behaved functions and sets, which don't actually appear in practice.
Crucially, this limits their ability to allow for general, wide-ranging, results.
We thus want to restrict our mathematical universe to rule out pathological objects.

\medskip\noindent
Let us first give some examples of pathological objects.
In topology, one can think about sets whose frontier have the same (or higher!) dimension, or functions that exhibit infinitely many changes in monotonicity in a bounded set; see %
\cref{nex:oscillations,nex:sethigherfrontier,nex:spirallimitcycle,nex:toposinecurve}.
In optimization theory, there are (two-dimensional) smooth convex coercive functions with curious behavior: failure of convergence of steepest descent with exact search, of central paths, or of Newton's flow, but also infinite length of Tikhonov path, or failure of the smooth Kurdyka--{\L}ojasiewicz inequality \cite{bolteCuriositiesCounterexamplesSmooth2022}.
Also, optimization over trigonometric functions is not decidable, even in the box-constrained case~\cite[3.5.5]{liberti2019undecidability}.

\medskip\noindent
In contrast with this, definability in o-minimal structures restrains the world of functions and sets to well-behaved, pathology-free objects.
As such, definable functions and sets enjoy remarkable properties.
The concepts that characterize a set being small in $\bbR$, including finiteness, countability, nowhere dense, meager, or zero Lebesgue measure become equivalent  (\cref{th:smallsets}).
There exists a definable choice lemma, which provides a constructive version of the
Axiom of Choice (\cref{definableChoice}).
A definable curve admits one-sided limits everywhere; this limit is also definable (\cref{existence_onesided_lim_omin}).
There is a unique notion of dimension of a set (\cref{omin_Dimension_Theorem}), and the frontiers of definable sets have lower dimensions than the sets (\cref{th:smallfrontier}).
These results build up to various stratification theorems: any definable set partitions into ``smooth'' definable subsets (manifolds), which fit ``nicely'' together; a parallel result holds for functions (\cref{verdier_stratification}).

\medskip\noindent
Stratification results proved particularly prolific in optimization theory.
They allowed to characterize how generalized derivatives behave on generic nonsmooth nonconvex functions \cite{bolte2007clarke}, among other results.
Eventually, this approach allowed to prove the convergence of the Stochastic Subgradient Method (SSM), also known as Stochastic Gradient Descent, on nearly any function encountered in the training of DNNs  \cite{davis2020stochastic}.
We discuss this result and its proof ideas in \Cref{sec:descentmethods}.
Focusing on tame functions and sets allowed to obtain the first theory of Automatic Differentiation (AD) on nonsmooth functions, along with a convergence proof of AD-based SSM \cite{bolte2020mathematical,bolteConservativeSetValued2021}; see \Cref{sec:remarkAD} for details.

\subsection{Content and outline}

In this document, we seek to provide a unified exposition of concepts at the interface of \emph{o-minimality}, \emph{optimization theory}, and \emph{deep learning} theory and practice.
As a motivating case-study, we expose the material that builds towards the convergence proof of the Stochastic Subgradient Method.
The ideas presented in this document are not new.
Rather, we have strived to provide proofs and arguments that are up-to-date, concise, and help develop intuition, based on material from three distinct fields, scattered across many articles.
As such, we hope that this document, along with the many articles it references, will help people to learn and use this theoretical framework.

\medskip\noindent
\Cref{sec:semialg} introduces the basic concepts and interest of definability on the familiar objects built only from (piecewise) polynomials, that is semialgebraic geometry.
\Cref{sec:defomin} discusses the nice mathematical properties of objects definable in an o-minimal structure.
\Cref{sec:descentmethods} details the key role of o-minimality in the convergence proof of SSM.

\subsection{Conventions and notation}\label{sec:conventions}
Throughout the text, $m$ and $n$ range over $\N=\{0,1,2,\ldots\}$.
$\R^>$ denotes the positive real numbers and $\varnothing$ the empty set.
If $S\subseteq A\times B$ and $a\in A$, we denote $S_{a}:=\{b\in B:(a,b)\in S\}$,
we call it a~\textbf{fiber} and we view $S$ as describing the family $(S_a)_{a\in A}$ of subsets of $B$. Usually this situation occurs where a set $R$ is given, and $A=R^m$, $B=R^n$, and $A\times B$ is identified with $R^{m+n}$.
We denote the distance between two subspaces $T,T'$ of $\R^n$ by $\Delta(T,T') \ := \ \sup_{v\in T,\|v\|=1}\operatorname{dist}(v,T')$.
The \emph{differential} of a function $h:\bbR^{\ndim}\to\bbR^{p}$ at $x\in\bbR^{\ndim}$ is the map $\D h(x):\bbR^\ndim \to \bbR^p$, defined as \(\D h(x) : \eta \mapsto \lim_{t \to 0} \frac{h(x + t\eta) - h(x)}{t}\) when the limit exists.

\medskip\noindent
We adopt notions concerning manifolds from \cite{boumal2023introduction}.
A subset $\M$ of $\bbR^{\ndim}$ is a \textbf{$\manDim$-dimensional $\C^{\smoothDeg}$-manifold} around $\crit[\vx] \in \M$
if there exists a $\C^{\smoothDeg}$ function $\maneq : \bbR^{\ndim} \to \bbR^{\ndim-\manDim}$ with a surjective derivative at $\crit[\vx] \in \M$
that satisfies for all $\vx$ close enough to $\crit[\vx]$ that $\vx \in\M$ if and only if $\maneq(\vx) = 0$.
We let $\dim(M) = d$ denote the dimension of $M$.

\medskip\noindent
Let $I\subset\bbR$ denote an interval. A~\textbf{curve} $c$ is a~continuous map $c:I\to\bbR^{\ndim}$ and its \textbf{velocity} is the derivative $\frac{d}{dt}c(t)$, also denoted $\dot{c}(t)$, wherever defined. %
A~curve $x:[0, +\infty) \to \bbR^n$ is called \textbf{absolutely continuous} if there exists a~function $y:[0, +\infty) \to \bbR^n$
that satisfies $x(t) = x(0) + \int_0^t y(\tau)d\tau$, for any $t \ge 0$.
In that case, there holds $\dot{x}(t) = y(t)$ for a.e. $t\ge 0$.

\medskip\noindent
Consider a $\manDim$-dimensional $\C^{\smoothDeg}$-manifold $\M$ included in $\bbR^n$, and a point $\vx\in\M$.
The \textbf{tangent space} at $\vx\in\M$, noted $\tangentM[\vx]$, is the set of all possible velocities of curves on $\M$ that go through $\vx$, evaluated at $\vx$.
This is a $\manDim$-dimensional linear space, which can be expressed as the null space of the differential of a manifold defining map: \(\tangentM[\vx] = \ker \D\maneq (\vx)\).
The \textbf{normal space} at point $\vx\in\M$, noted $\normalM[\vx]$, defines as the orthogonal space to $\tangentM[\vx]$ in $\bbR^{\ndim}$.
We denote the orthogonal projection onto $\tangentM[\vx]$ by $\proj_{\tangentM[\vx]}(\cdot)$.
Finally, we equip each tangent and normal space by the (restriction of the) scalar product of the ambient euclidean space $\bbR^n$, thus making the manifold Riemannian.

\section{Semialgebraic geometry}
\label{sec:semialg}

\noindent
To provide a specific, easy to visualize example, let us start with semialgebraic geometry.
We also observe that some, but not all, objects in deep learning are already definable in the semialgebraic setting.
Let us first provide an example of a semialgebraic set.

\begin{figure}[h]
    \centering
    \begin{tikzpicture}[even odd rule]
        \clip (-3,0) rectangle (4.5,2);
        \fill[c1] (-1,0) circle (1.2) (-1,0) circle (2);
        \fill[c1]  (1,0) circle (1.2)  (1,0) circle (2);
        \fill[c1] (3.5,0.5) rectangle (4,2.5);
    \end{tikzpicture}
    \caption{Illustration of the set $S$ in \cref{eq:setsemialg}.\vspace{-1ex}}
    \label{fig:semialg}
\end{figure}
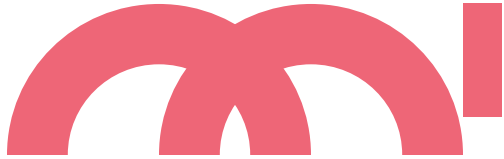

\begin{example}\label{ex:semialg}
    \Cref{fig:semialg} shows a two-dimensional semialgebraic set, defined by the following set of equations:
    \begin{equation}\label{eq:setsemialg}
    \begin{aligned}
    S &= S_1 \cup S_2, \qquad \text{where} \quad
    S_1 =
    \left\{
    (x, y):
    \begin{array}{c}
    |x-3.25| \le 0.25 \\
    |y - 1.5| \le 1
    \end{array}
    \right\},
    \\
    S_2 &=
    \left\{ (x, y) :
    \begin{array}{c}
    1.2^2 \le (x + 1)^2 + y^2 \le 2^2 \\
    y \ge 0
    \end{array}
    \right\}
    \cup
    \left\{ (x, y) :
    \begin{array}{c}
    1.2^2 \le (x - 1)^2 + y^2 \le 2^2 \\
    y \ge 0
    \end{array}
    \right\}.
    \end{aligned}
    \end{equation}
\end{example}

\subsection{Semialgebraic sets}
In \emph{semialgebraic geometry}, one studies sets defined by polynomial equalities and inequalities over real numbers, in contrast to working over complex numbers as in \emph{classical algebraic geometry}.
We begin with the notion of \emph{semialgebraic set}.

\begin{definition}\cite[Definition 2.1.4]{Bochnak2013real}\label{def:semialg}
A \textbf{semialgebraic} subset of $\R^n$ is a set of the form
\[
\bigcup_{i=1}^s\bigcap_{j=1}^{r_i}\{x\in\R^n:f_{i,j}(x)\diamond_{i,j}0\}
\]
where $f_{i,j}\in\R[X_1,\ldots,X_n]$ and $\diamond_{i,j}\in\{<,=\}$, for $i=1,\ldots,s$ and $j=1,\ldots,r_i$.
\end{definition}

\medskip\noindent
One can show that the complement of a semialgebraic set is also semialgebraic, and thus the semialgebraic sets are precisely the boolean combinations of polynomial equalities and inequalities.
\Cref{def:semialg} defines for each $n\in\bbN$ a boolean algebra $\mathcal{S}_n$ of semialgebraic subsets of $\R^n$. In the case $n=1$, we have the following complete description of the boolean algebra $\mathcal{S}_1$: any semialgebraic set in one variable is equivalent to a semialgebraic set defined using simple equalities and inequalities ``$X=a$'' and ``$X>a$'', without the need for  polynomials.
This is summarized in the following lemma.

\begin{lemma}[o-minimality property]\label{lem:o-minimal-property}\cite[Proposition 2.1.7]{Bochnak2013real}
Every semialgebraic subset of $\R$ is a finite union of intervals and points.
\end{lemma}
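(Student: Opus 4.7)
The plan is to reduce the statement to two basic facts: (i) every nonzero polynomial in one real variable has only finitely many roots, and (ii) the family of finite unions of points and intervals in $\R$ forms a Boolean subalgebra of the power set of $\R$.

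First, I would let $\mathcal{F} \subseteq \mathcal{S}_1$ denote the collection of finite unions of points and intervals in $\R$ (where ``interval'' is taken to include open, closed, half-open, bounded, and unbounded intervals). A direct case analysis shows $\mathcal{F}$ is closed under finite unions, finite intersections, and complements: intersection of two intervals is again an interval or empty; complement of an interval is the union of at most two intervals; and these two facts extend to finite unions by distributing. Hence $\mathcal{F}$ is a Boolean subalgebra of $2^{\R}$.

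Next, in view of \cref{def:semialg}, it suffices to show that each ``atomic'' set $\{x \in \R : f(x) \diamond 0\}$ with $f \in \R[X]$ and $\diamond \in \{<,=\}$ lies in $\mathcal{F}$. If $f$ is identically zero, the sets are $\R$ and $\varnothing$, both in $\mathcal{F}$. Otherwise, $f$ has at most $\deg(f)$ real roots, so $\{x : f(x) = 0\}$ is a finite set of points, hence in $\mathcal{F}$. For $\{x : f(x) < 0\}$, enumerate the real roots of $f$ as $a_1 < \cdots < a_k$; by the intermediate value theorem $f$ has constant sign on each of the open intervals $(-\infty, a_1)$, $(a_1, a_2)$, $\ldots$, $(a_{k-1}, a_k)$, $(a_k, +\infty)$, so $\{x : f(x) < 0\}$ is the union of whichever of these finitely many open intervals it contains, and thus lies in $\mathcal{F}$.

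Combining the two steps, every semialgebraic subset of $\R$, being a Boolean combination of atomic sets each of which lies in $\mathcal{F}$, itself lies in $\mathcal{F}$. The main (modest) obstacle is just to be pedantic about the closure of $\mathcal{F}$ under the Boolean operations, and to remember that a single point counts as a degenerate interval so that singletons do not need to be handled as a separate case when complementing.
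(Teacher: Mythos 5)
Your proof is correct. The paper itself gives no proof of this lemma---it simply cites \cite[Proposition 2.1.7]{Bochnak2013real}---and your argument is precisely the standard one found there: a nonzero univariate polynomial has finitely many roots and constant sign on the intervals between consecutive roots, so each atomic set $\{x : f(x) \diamond 0\}$ is a finite union of points and intervals, and the class of such sets is closed under the finite unions and intersections appearing in \cref{def:semialg} (your additional check of closure under complementation is harmless, and in fact also yields the paper's subsequent remark that semialgebraic sets form a Boolean algebra).
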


\noindent
In practice, the semialgebraic sets we encounter are typically subsets  of $\R^n$ for some $n\gg1$. The following key theorem enables us to study $\mathcal{S}_n$ in terms of $\mathcal{S}_m$ for some $m<n$.

\begin{tarskiSeidenberg*}\label{th:tarskiSeidenberg}\cite{Tarski48,seidenberg1954new} For $m<n$, if $A\subseteq\R^{n}$ is semialgebraic, then $\pi(A)\subseteq\R^m$ is semialgebraic, where $\pi:\R^{n}\to\R^m$ is any coordinate projection.
\end{tarskiSeidenberg*}

\noindent
For example, the following lemma can be seen as an analogue of \cref{lem:o-minimal-property} for $n>1$.

\begin{lemma}\label{lemma:semialgsetstrat}
Suppose $X\subseteq\R^n$ is semialgebraic.
Then $X$ has finitely many connected components;
moreover, there exists a finite partition $\{M_1,\ldots,M_k\}$ of $X$ such that each $M_i$ is a semialgebraic connected embedded $C^1$-manifold in $\R^n$.
\end{lemma}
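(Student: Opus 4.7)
The plan is to establish both conclusions simultaneously by a cylindrical cell-decomposition argument, inducting on the ambient dimension $n$.

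\textbf{Base case ($n=1$).} By \cref{lem:o-minimal-property}, $X$ is a finite disjoint union of singletons and open intervals; each piece is semialgebraic, connected, and an embedded $C^1$-manifold in $\R$.

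\textbf{Inductive step.} Assume the lemma in dimension $n-1$. Write $X\subseteq\R^n$ as a boolean combination of conditions $f_j\diamond 0$ for finitely many $f_1,\dots,f_N\in\R[X_1,\dots,X_n]$, and regard each $f_j$ as a polynomial in $X_n$ whose coefficients are polynomials in $x'=(X_1,\dots,X_{n-1})$. Let $D\subseteq\R^{n-1}$ be the \emph{bad set} consisting of those $x'$ at which some leading coefficient, some discriminant, or some pairwise resultant of the $f_j(x',\,\cdot\,)$ vanishes. Each such condition is a polynomial equality or inequality, so $D$ is semialgebraic; \cref{th:tarskiSeidenberg} will likewise ensure the semialgebraicity of every auxiliary set constructed below. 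Applying the inductive hypothesis separately to $D$ and $\R^{n-1}\setminus D$ partitions $\R^{n-1}$ into finitely many semialgebraic connected $C^1$-cells $\{C_\alpha\}$, each lying entirely in $D$ or entirely in $\R^{n-1}\setminus D$.

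\textbf{Cells over a good base cell.} Fix $C_\alpha\subseteq\R^{n-1}\setminus D$. Since no leading coefficient or discriminant of the $f_j$ vanishes on $C_\alpha$, the number of real roots of each $f_j(x',\,\cdot\,)$ is locally constant and, after ordering, yields semialgebraic functions $\xi_1(x')<\dots<\xi_{k_\alpha}(x')$ on $C_\alpha$ that are $C^1$ by the implicit function theorem. Form two types of cells over $C_\alpha$: the \emph{graph cells} $\{(x',\xi_i(x')):x'\in C_\alpha\}$ and the \emph{band cells} $\{(x',t):x'\in C_\alpha,\ \xi_i(x')<t<\xi_{i+1}(x')\}$, together with the unbounded first and last bands. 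Each such cell is semialgebraic, connected (being homeomorphic to $C_\alpha$ or to $C_\alpha\times(0,1)$), and an embedded $C^1$-manifold in $\R^n$. For base cells $C_\alpha\subseteq D$, one discards the $f_j$ that vanish identically on $C_\alpha$, replaces the remaining ones by the first non-vanishing coefficient polynomial in $X_n$, and iterates the construction; this recursion terminates since the total degree in $X_n$ strictly decreases. The sign pattern of $(f_j)_j$ is then constant on every final cell, so $X$ is a finite union of such cells; taking the $M_i$ to be those cells contained in $X$ yields the asserted partition, which also bounds the number of connected components of $X$ by the number of cells.

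\textbf{Main obstacle.} The technical heart is the continuous $C^1$-tracking of the roots over $C_\alpha\subseteq\R^{n-1}\setminus D$: one must engineer $D$ carefully so that on its complement the roots of all $f_j$ behave nicely \emph{simultaneously}, then invoke the implicit function theorem to produce $\xi_i\in C^1(C_\alpha)$, and finally use \cref{th:tarskiSeidenberg} to confirm semialgebraicity of each graph by expressing it as a projection of a polynomial system. The secondary delicate point is handling cells $C_\alpha\subseteq D$, which requires an auxiliary induction on $(n,\sum_j\deg_{X_n}f_j)$ to ensure termination.
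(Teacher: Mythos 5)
Your argument is correct in outline, but it takes a genuinely different route from the paper. The paper does not prove the lemma directly at all: its ``proof'' is a one-line remark deferring everything to the Verdier stratification theorem for sets definable in an o-minimal structure (\cref{verdier_stratification}). You instead give the classical cylindrical algebraic decomposition argument internal to semialgebraic geometry: induction on the ambient dimension, a bad set cut out by leading coefficients, discriminants and pairwise resultants, and graph/band cells over each connected base cell, with \cref{lem:o-minimal-property} as the base case and \cref{th:tarskiSeidenberg} guaranteeing semialgebraicity of every auxiliary set. Your route is more elementary and self-contained -- it uses only tools already introduced in the semialgebraic section and produces the explicit cylindrical structure of the cells as a bonus -- whereas the paper's route is shorter and generalizes verbatim to arbitrary o-minimal structures, where discriminants and resultants are no longer available. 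Two points to tighten rather than genuine gaps: over a base cell contained in the bad set $D$, the correct move is to truncate each $f_j$ to its highest-degree part in $X_n$ whose coefficient does not vanish identically on the cell (not to ``replace $f_j$ by a coefficient polynomial''), and the resulting recursion is most cleanly organized as a double induction on $\bigl(n,\sum_j\deg_{X_n}f_j\bigr)$ carried out \emph{before} the base partition is fixed, so that the decomposition of $D$ is already adapted to the truncated polynomials; as written, you partition $D$ first and iterate afterwards, which forces an extra refinement step you do not mention.
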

\begin{proof}[Remark about proof]
There are several ways to prove this; for example, this is a consequence of Verdier stratification of sets definable in o-minimal structures;
see \cref{verdier_stratification}.
\end{proof}

\begin{example}[\cref{ex:semialg} continued.]\label{ex:semialgstratif}
  Consider again the set $S=S_1\cup S_2$ defined in \cref{eq:setsemialg}.
  The fact that $S$ has finitely many connected components is clear from the picture.
  A partition of $S$ into smooth manifolds is shown in \cref{fig:stratificationS}. We consider each connected component separately.
  The rectangle $S_1$ splits as the union of one manifold of dimension 2 (its interior), four manifolds of dimension 1 (its sides without corners), and four manifolds of dimension 0 (the four points at its corners).
  The set $S_2$ splits in one manifold of dimension 2 (its interior $I$), twelve manifolds of dimension 1 (denoted by $E_1,\ldots,E_{12}$), and twelve manifolds of dimension 0 (denoted by $P_1,\ldots,P_{12}$).
  Note that, as guaranteed by \cref{lemma:semialgsetstrat}, this partition is indeed finite. In addition, it is the one with the minimal number of elements such that each element is a connected $C^1$-manifold of $\bbR^2$.
\end{example}

\begin{figure}
\centering
\begin{tikzpicture}[even odd rule]

   \clip (-3.4,-0.2) rectangle (4.5,2.2);

  \fill[pattern=dots, pattern color = c1] (-1,0) circle (1.2) (-1,0) circle (2);
  \fill[pattern=dots, pattern color = c1]  (1,0) circle (1.2)  (1,0) circle (2);

  \fill[pattern=dots, pattern color = c2] (3.5,0.5) rectangle (4,2);
  \fill[white] (-3,-0.6) rectangle (4,0); %

  \draw[c2, very thick] (-1,0) ++(0:2) arc (0:34.92:2); %
  \draw[c3, very thick] (-1,0) ++(60:2) arc (60:180:2); %
  \draw[c4, very thick] (-1,0) ++(0:1.2) arc (0:33.56:1.2); %
  \draw[c5, very thick] (-1,0) ++(72.54:1.2) arc (72.54:180:1.2); %
  \draw[c6, very thick] (1,0) ++(180:2) arc (180:145.08:2); %
  \draw[c7, very thick] (1,0) ++(120:2) arc (120:0:2); %
 \draw[c8, very thick] (1,0) ++(180:1.2) arc (180:146.44:1.2); %
 \draw[c9, very thick] (1,0) ++(107.46:1.2) arc (107.46:0:1.2); %
 \draw[c10, very thick] (-3,0.01) -- (-2.2,0.01);
 \draw[c11, very thick] (-1,0.01) -- (-0.2,0.01);
 \draw[c12, very thick] (1,0.01) -- (0.2,0.01);
 \draw[c13, very thick] (3,0.01) -- (2.2,0.01);

  \draw[c2, very thick] (3.5,0.5) -- (4,0.5);
  \draw[c4, very thick] (4,0.5) -- (4,2);
  \draw[c6, very thick] (3.5,2) -- (4,2);
  \draw[c10, very thick] (3.5,2) -- (3.5,0.5);

  \fill[black] (3.5,0.5) circle (1.5pt);
  \fill[black] (4,0.5) circle (1.5pt);
  \fill[black] (3.5,2) circle (1.5pt);
  \fill[black] (4,2) circle (1.5pt);

  \fill[black] (-3,0) circle (1.5pt);
  \fill[black] (-2.2,0) circle (1.5pt);
  \fill[black] (-1,0) circle (1.5pt);
  \fill[black] (-.2,0) circle (1.5pt);
  \fill[black] (.2,0) circle (1.5pt);
  \fill[black] (1,0) circle (1.5pt);
  \fill[black] (2.2,0) circle (1.5pt);
  \fill[black] (3,0) circle (1.5pt);
  \fill[black] (0,1.732) circle (1.5pt);
  \fill[black] (0,0.663) circle (1.5pt);
  \fill[black] (0.64,1.145) circle (1.5pt);
  \fill[black] (-0.64,1.145) circle (1.5pt);

  \node[fill=white,text=c1] at (0,1.2) {$I$};
  \node[text=c2] at (-2.5,1.7) {$E_1$};
  \node at (-3.25,0) {$P_1$};
  \node at (0,2) {$P_2$};
   
\end{tikzpicture}
\caption{A partition of the set $S$
defined in \cref{ex:semialg} and illustrated in \cref{fig:semialg}. This partition is further discussed in \cref{ex:semialgstratif,ex:stratificationS}.}
\label{fig:stratificationS}
\end{figure}

\noindent
By \Cref{lem:o-minimal-property}, the $1$-variable semialgebraic sets $X\subseteq\R=\R^1$ satisfy a strong dichotomy:
\[
\text{$X$ is finite} \quad \text{\emph{or}} \quad \text{$X$ contains an interval.}
\]
This can be generalized to $n$-variable semialgebraic sets ($n\geq 1$) via some notion of \emph{dimension}, for example:

\begin{definition}\label{semialg_dim_def_Krull_man}
Given a semialgebraic set $A\subseteq \R^n$, define\footnote{Recall: $I(A)=\{f\in \R[X_1,\ldots,X_n]:f(a)=0\text{ for all $a\in A$}\}$ is the usual vanishing ideal of $A$. The \emph{Krull dimension} of a commutative ring is the maximal length of chains of prime ideals.}
the:
\begin{enumerate}
\item \textbf{Krull dimension} $\operatorname{dim}_{K}(A)$ of $A$  to be the Krull dimension of the ring $\R[X_1,\ldots,X_n]/I(A)$ (c.f.,~\cite[2.8.1]{Bochnak2013real}),
\item
\textbf{topological dimension} $\operatorname{dim}_t(A)$ to be the maximal integer $d\in\{0,\ldots,n\}$ such that $\pi_d[A]$ has nonempty interior in $\R^d$, where
$\pi_d : \R^n\to\R^d$
is a~projection map onto $d$ coordinates. %
\end{enumerate}
\end{definition}

\noindent
As it turns out, these dimensions are always equal (c.f.,~\cite[2.8]{Bochnak2013real}):
\[
\dim_K(A) \ = \dim_t(A),\quad\text{for every nonempty semialgebraic $A\subseteq\R^n$}.
\]
From the point of view of the upcoming \Cref{omin_Dimension_Theorem}, this agreement of dimension was inevitable the moment we observed the o-minimality of the semialgebraic setting (\cref{lem:o-minimal-property}).

\subsection{Semialgebraic functions}
Having introduced semialgebraic sets, we now turn to semialgebraic functions. Given a~function $f:A\to\R^n$ with $A\subseteq\R^m$, we say that $f$ is \textbf{semialgebraic} if its graph $\Gamma(f)\subseteq\R^{m+n}$ is a~semialgebraic set. The following two lemmas enable us
to easily recognize semialgebraic functions.

\begin{lemma}[Composability]\label{semialgebraic_composability}
The composition of semialgebraic functions is semialgebraic.
\end{lemma}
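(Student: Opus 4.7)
The plan is to realise $\Gamma(g\circ f)$ as the projection of a semialgebraic set and then invoke the Tarski--Seidenberg theorem (\cref{th:tarskiSeidenberg}). Write $f\colon A\to\R^n$ and $g\colon B\to\R^p$ with $A\subseteq\R^m$, $B\subseteq\R^n$, and $f(A)\subseteq B$, so that by assumption the graphs $\Gamma(f)\subseteq\R^{m+n}$ and $\Gamma(g)\subseteq\R^{n+p}$ are semialgebraic.

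First, I would form the ``joined graph''
\[
    S \ := \ \bigl(\Gamma(f)\times\R^{p}\bigr) \,\cap\, \bigl(\R^{m}\times\Gamma(g)\bigr) \ \subseteq \ \R^{m+n+p},
\]
which records exactly those triples $(x,y,z)$ with $y=f(x)$ and $z=g(y)$. Two quick remarks show that $S$ is semialgebraic: (i) for any semialgebraic $X\subseteq\R^{\ell}$, the product $X\times\R^{k}$ is semialgebraic, since one may reinterpret each defining polynomial of $X$ as an element of $\R[X_1,\ldots,X_{\ell+k}]$ that happens not to depend on the last $k$ variables; and (ii) finite intersections preserve semialgebraicity directly from \cref{def:semialg}, as the Boolean algebra of semialgebraic sets is closed under intersections.

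Next, I would identify $\Gamma(g\circ f)$ with the image of $S$ under the coordinate projection $\pi\colon\R^{m+n+p}\to\R^{m+p}$ that forgets the $n$ middle coordinates: $(x,z)\in\pi(S)$ iff there exists $y\in\R^n$ with $(x,y,z)\in S$, which by the defining conditions of $S$ is equivalent to $z=g(f(x))$. Writing $\pi$ as an $n$-fold composition of single-coordinate projections and iterating the Tarski--Seidenberg theorem then yields that $\Gamma(g\circ f)=\pi(S)$ is semialgebraic, so $g\circ f$ is a semialgebraic function.

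The only nontrivial ingredient is the projection step: without \cref{th:tarskiSeidenberg} there is no reason the existential condition ``$\exists y$ with $y=f(x)$ and $z=g(y)$'' should be expressible as a Boolean combination of polynomial (in)equalities in $(x,z)$ alone. Everything else is routine manipulation of the definition of a semialgebraic set.
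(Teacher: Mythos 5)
Your proposal is correct and follows essentially the same route as the paper's proof: realise $\Gamma(g\circ f)$ as the projection, forgetting the middle coordinates, of the intersection of the two cylinders over $\Gamma(f)$ and $\Gamma(g)$, and invoke the Tarski--Seidenberg theorem. The only difference is that you carry this out in full generality for maps between $\R^m$, $\R^n$, $\R^p$ and spell out the routine closure properties, whereas the paper sketches the one-variable case $f,g:\R\to\R$ for simplicity.
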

\begin{proof}
For simplicity, consider semialgebraic functions $f:\R\to\R$ and $g:\R\to\R$. By definition, there exist semialgebraic subsets $\varphi(x,y)$ and $\psi(x,y)$ of $\R^2$ such that $\Gamma(f)=\varphi(x,y)$ and $\Gamma(g)=\psi(x,y)$. By shifting variables in the definition of $g$, we may consider $\Gamma(g)$ as defined by a semialgebraic set $\psi(y,z)$. Now, if we consider both $\Gamma(f)$ and $\Gamma(g)$ as subsets of $\R^3$ involving all three variables $x,y,z$, we see by the \cref{th:tarskiSeidenberg} that:
\[
\Gamma(g\circ f) \ = \ \pi_{x,z}(\varphi(x,y)\cap \psi(y,z))
\]
is also semialgebraic. Indeed, viewing this expression in ``logical form'' it describes the set:
\[
\theta(x,z) \ := \ \exists y\ (f(x)=y \ \text{and} \ g(y)=z),
\]
which is just the definition of function composition.
\end{proof}

\begin{lemma}[Definition by case distinction]\label{lem:case_distinction}
Suppose the functions $f,g:\R^n\to\R$ and the set $X\subseteq \R^n$ are semialgebraic. Then the following function is also semialgebraic:
\[
h \ : \ \R^n\to\R,\quad x \ \mapsto \ h(x) \ := \ \begin{cases}
f(x), & \text{if $x\in X$}; \\
g(x), & \text{otherwise}.
\end{cases}
\]
\end{lemma}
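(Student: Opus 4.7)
The plan is to verify the lemma directly from the definition by showing that the graph $\Gamma(h)\subseteq \R^{n+1}$ is a semialgebraic set. To that end, I would first write $\Gamma(h)$ as a Boolean combination of already-semialgebraic pieces, and then invoke the fact (noted just after \cref{def:semialg}) that the class $\mathcal{S}_{n+1}$ is closed under finite unions, finite intersections, and complements.

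Concretely, the first step is the set-theoretic identity
\[
\Gamma(h) \ = \ \bigl(\Gamma(f)\cap (X\times \R)\bigr)\ \cup\ \bigl(\Gamma(g)\cap ((\R^n\setminus X)\times \R)\bigr),
\]
which follows immediately from the definition of $h$: a pair $(x,y)$ lies in $\Gamma(h)$ exactly when either $x\in X$ and $y=f(x)$, or $x\notin X$ and $y=g(x)$. The second step is to observe that each piece appearing on the right is semialgebraic. The graphs $\Gamma(f),\Gamma(g)\subseteq \R^{n+1}$ are semialgebraic by hypothesis. The set $X\subseteq \R^n$ is semialgebraic by hypothesis, hence so is its complement $\R^n\setminus X$ (Boolean algebra structure on $\mathcal{S}_n$), and both $X\times\R$ and $(\R^n\setminus X)\times\R$ are semialgebraic subsets of $\R^{n+1}$ since a Cartesian product of semialgebraic sets is readily checked to be semialgebraic from \cref{def:semialg} (the defining polynomial conditions simply ignore the extra coordinate). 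The third step is to close under the Boolean operations in $\mathcal{S}_{n+1}$: intersecting and then unioning semialgebraic sets yields a semialgebraic set.

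Combining these steps, $\Gamma(h)$ is semialgebraic, so $h$ is a semialgebraic function. I do not expect any real obstacle: the proof is entirely a bookkeeping exercise in the Boolean algebra structure of $\mathcal{S}_{n+1}$, and is in the same spirit as, but strictly simpler than, \cref{semialgebraic_composability}, which additionally required the \nameref*{th:tarskiSeidenberg}. Here no projection is needed, because the case distinction can be expressed quantifier-free once one knows that $\R^n\setminus X$ is semialgebraic.
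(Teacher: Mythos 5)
Your proof is correct. The paper leaves this lemma as an exercise for the reader, and your argument --- writing $\Gamma(h)$ as the Boolean combination $\bigl(\Gamma(f)\cap (X\times\R)\bigr)\cup\bigl(\Gamma(g)\cap((\R^n\setminus X)\times\R)\bigr)$ and closing under the Boolean algebra structure of $\mathcal{S}_{n+1}$, with no projection needed --- is precisely the intended quantifier-free solution.
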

\begin{proof}
The proof is left as an exercise for the reader.
\end{proof}

\begin{remark}\label{rem:table_semialgebraic}
We note that the ReLU, Softsign, Squared, Absolute deviation, Hinge and Huber functions
presented in \cref{table:definable} are semialgebraic. Indeed, they are defined
by expressions which include polynomials, their addition, multiplication and
inverse operations, and definition by case distinction. Polynomials are semialgebraic
functions by \cref{def:semialg}. Then it suffices to use \cref{semialgebraic_composability,lem:case_distinction}.
\end{remark}

\noindent
For semialgebraic functions, all one-sided limits exist in $\R\cup\{\pm\infty\}$
as is sketched in a~more general setting in \Cref{sec:tame-asymptotics}; see \cref{existence_onesided_lim_omin}. Moreover, there is a complete description of all asymptotics which can occur,
as is stated in the following lemma.

\begin{lemma}[Semialgebraic asymptotics]\cite{Miller94PowFunc}\label{lem:semialgebraic_asymptotics}
Let $f:\R\to\R$ be a semialgebraic function not eventually equal to $0$. Then there exists $c\in\R\setminus\{0\}$ and $q\in\mathbb{Q}$ such that $f(t)\sim ct^q$ as $t\to+\infty$, i.e., $\lim_{t\to+\infty}f(t)/ct^q=1$.
\end{lemma}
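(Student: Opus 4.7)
The plan is to reduce the asymptotic question to the classical study of real branches of algebraic curves at infinity, and then to read off the leading term from the Newton--Puiseux theorem.

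First, I would apply the Monotonicity Theorem in its semialgebraic form (classical; it is the semialgebraic special case of the o-minimal Monotonicity Theorem used later in the paper) to obtain some $N\in\R$ such that $f$ is continuous on $(N,+\infty)$ and is there either constant or strictly monotone. If $f$ is eventually constant equal to $c$, we are done with $q=0$. Otherwise, the zero set $\{t:f(t)=0\}$ is semialgebraic in $\R$, hence a finite union of intervals and points by \cref{lem:o-minimal-property}; as $f$ is not eventually zero, we may enlarge $N$ so that $f(t)\neq 0$ for all $t>N$.

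Second, I would show that past $N$ the function $f$ is \emph{algebraic}: there exists a nonzero $P\in\R[T,Y]$ with $P(t,f(t))=0$ for all $t>N$. The set $X:=\Gamma(f)\cap\bigl((N,+\infty)\times\R\bigr)$ is semialgebraic; any manifold partition of it in the sense of \cref{lemma:semialgsetstrat} can contain no $2$-dimensional piece, because the graph of a function has empty interior in $\R^{2}$. Hence $\dim_{m}(X)=1$, and by the agreement of manifold and Krull dimensions (\cref{semialg_dim_def_Krull_man}) the vanishing ideal $I(X)\subseteq\R[T,Y]$ is nonzero; any $P\in I(X)\setminus\{0\}$ will do. If an irreducible such $P$ is desired, a further application of \cref{lem:o-minimal-property} to the semialgebraic sets $\{t>N:P_{i}(t,f(t))=0\}$, ranging over the irreducible factors $P_{i}$ of $P$, singles out one factor that annihilates $f$ on a final segment.

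Third, substituting $t=1/s$ and clearing denominators in $P(t,y)=0$ yields a polynomial $\widetilde{P}(s,y)$ with $\widetilde{P}(s,f(1/s))=0$ near $s=0^{+}$. The Newton--Puiseux theorem, together with a standard reduction at infinity, then provides a (possibly meromorphic) convergent Puiseux expansion
\[
f(1/s)\sim c_{k_{0}}\,s^{k_{0}/d}\quad\text{as }s\to 0^{+},
\]
for some positive integer $d$, some $k_{0}\in\mathbb{Z}$, and some nonzero $c_{k_{0}}$; the coefficient $c_{k_{0}}$ is real because $f$ is real valued on the unique continuous monotone real branch under consideration. Reverting $s=1/t$ gives $f(t)\sim c t^{q}$ with $c=c_{k_{0}}\in\R$ and $q=-k_{0}/d\in\Q$.

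The main obstacle is the second step: promoting the logical description of $\Gamma(f)$ as a semialgebraic set into a single genuine polynomial equation satisfied by $(t,f(t))$ on a final segment. This is exactly where the dimension agreement of \cref{semialg_dim_def_Krull_man} does the heavy lifting; once that is available, the monotonicity reduction is routine and the final step is the classical theory of algebraic curves at infinity.
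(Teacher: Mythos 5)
Your argument is essentially correct, but it takes a genuinely different route from the paper, which gives no proof at all: it simply cites Miller, for whom this lemma is the semialgebraic instance of a far more general growth dichotomy --- in any polynomially bounded o-minimal structure every eventually nonzero definable one-variable function satisfies $f(t)\sim Ct^{\gamma}$ with $\gamma$ in the field of exponents (compare \cref{polynomially_bounded_Ran}), and for $\R_{\operatorname{alg}}$ that field is $\Q$. Your proof instead stays inside classical real algebraic geometry: the dimension count showing $I(\Gamma(f))\neq(0)$ --- the standard fact that a semialgebraic function is algebraic over $\R(t)$ --- followed by Newton--Puiseux at infinity. What this buys is a self-contained, elementary argument that also \emph{explains} the rationality of the exponent (it is $-k_{0}/d$ for the ramification index $d$ of a Puiseux branch); what it gives up is the uniform scope of Miller's theorem, which covers $\R_{\operatorname{an}}$ and $\R_{\operatorname{alg}}^{\R}$ with no additional work. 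Your key second step is sound as written: a $2$-dimensional stratum in a partition per \cref{lemma:semialgsetstrat} would be open in $\R^{2}$, contradicting that a graph has empty interior, so $\dim_{m}\leq 1<2$ forces $I(X)\neq(0)$ via \cref{semialg_dim_def_Krull_man}. The one point you should tighten is the appeal to ``the unique continuous monotone real branch'': uniqueness is not automatic and needs a sentence. The cleanest fix is to note that, after shrinking $\varepsilon$, the graph of $s\mapsto f(1/s)$ is a connected semialgebraic subset of $\{(s,y):\widetilde{P}(s,y)=0,\ 0<s<\varepsilon\}$, which over a small enough punctured interval splits into finitely many disjoint graphs of continuous functions (cylindrical decomposition, or \cref{lem:o-minimal-property} applied to the sets where $f(1/s)$ agrees with each branch), so $f(1/s)$ must coincide with a single branch on some $(0,\varepsilon')$; the leading coefficient of that branch is then real and nonzero exactly because $f$ is real-valued and not eventually zero.
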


\noindent
It follows that many important functions in mathematics are not semialgebraic.

\begin{lemma}\label{lem:exp_not_semialgebraic}
The real exponential function $\exp:\R\to\R$ is not semialgebraic.
\end{lemma}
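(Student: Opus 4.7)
The plan is to derive the result as an immediate corollary of the preceding \cref{lem:semialgebraic_asymptotics} by contradiction. Suppose $\exp$ were semialgebraic. Since $\exp(t)>0$ for all $t\in\R$, in particular $\exp$ is not eventually zero, so \cref{lem:semialgebraic_asymptotics} would supply $c\in\R$ and $q\in\Q$ such that $\exp(t)\sim ct^{q}$ as $t\to+\infty$, i.e.\ $\lim_{t\to+\infty} \exp(t)/(ct^{q})=1$. For this limit to be well-defined and equal to $1$, we need $c\neq 0$, and then in fact $c>0$ since $\exp$ is positive.

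The contradiction comes from the classical super-polynomial growth of the exponential: for any $q\in\Q$, $\lim_{t\to+\infty}\exp(t)/t^{q}=+\infty$. This is easily seen by picking an integer $N\in\N$ with $N>q$ and comparing with the Taylor expansion, $\exp(t)\geq t^{N}/N!$ for $t\geq 0$, which gives $\exp(t)/t^{q}\geq t^{N-q}/N!\to+\infty$. Hence $\exp(t)/(ct^{q})\to+\infty$ as $t\to+\infty$, contradicting the limit value $1$ obtained above. Therefore $\exp$ cannot be semialgebraic.

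The only substantive ingredient is \cref{lem:semialgebraic_asymptotics}, which captures the rigid asymptotic shape available to semialgebraic functions; no real obstacle is anticipated in the argument itself.
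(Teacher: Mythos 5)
Your proposal is correct and follows exactly the paper's own (very terse) argument: the paper also derives the lemma directly from the asymptotic characterization in \cref{lem:semialgebraic_asymptotics}, noting that the asymptotics of $\exp$ are not of the form $ct^{q}$. You simply fill in the routine details (the contradiction via $\exp(t)\geq t^{N}/N!$) that the paper leaves implicit.
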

\begin{proof}
This follows from the observation that \cref{lem:semialgebraic_asymptotics} characterizes the available asymptotic behavior of semialgebraic functions, and the asymptotics of $\exp$ is not among them.
\end{proof}

\noindent
The need to extend the semialgebraic framework so that the exponential function
is included  led to what is known as Tarski’s problem.
\begin{tarskisProblem*}
Develop versions of results from semialgebraic geometry in a setting that also includes the real exponential function $\exp:\R\to\R$.
\end{tarskisProblem*}

\subsection{The Moment--Sum-of-Squares Hierarchy}
\label{sec:momSOS} 
It is well known that global minimization of nonconvex functions is in general extremely difficult (NP-hard).
Surprisingly, for certain optimization problems consisting exclusively of polynomials, finding global minimal values
becomes less difficult.
We take a quick detour from the main exposition on o-minimal structures, and provide a glimpse of the \emph{Moment--Sum-of-Squares method},
as one of the major success stories in the realm of \emph{semialgebraic optimization}.
For details and references, we point the reader to \cite{magronSparsePolynomialOptimization2023,Lasserre_2015}.

Consider the nonconvex optimization problem
\begin{equation}\label{eq:minpoly}
  f^\star = \inf_{x\in X} f(x), \qquad \text{ where } X = \{ x \in \bbR^n : g_1(x) \ge 0, \dots, g_m(x) \ge 0 \},
\end{equation}
and $f$, $g_1$, \dots, $g_m$ are real polynomials over $\bbR^n$.
In view of \cref{def:semialg}, both $X$ and $f$ are semialgebraic objects, thus making \cref{eq:minpoly} a semialgebraic problem.

Problem \eqref{eq:minpoly} can be cast as a linear problem over the infinite-dimensional space of probability measures:
\begin{equation}\label{eq:minmeasure}
    f_\infty = \inf_{\mu \in \mathcal{M}(X)_+} \left\{ \int_{X} f(x) d\mu(x), \quad \text{s.t.} \quad \int_X d\mu = 1 \right\},
\end{equation}
where $\mathcal{M}(X)_+$ denotes the cone of finite Borel measures supported on $X$.
To see that $f_\infty = f^\star$, denote $x^\star$ a global minimizer of \eqref{eq:minpoly} and $\delta_{x^\star}$ the corresponding Dirac measure. Then $\delta_{x^\star}$ is feasible for \eqref{eq:minmeasure}, which implies $f_\infty \le f^\star$.
Simultaneously, there holds $\int_X f(x)d\mu(x) \ge \int_X f^\star d\mu(x) = f^\star$ for any finite Borel measure of unit mass, which implies $f_\infty \ge f^\star$.
Going from formulation \eqref{eq:minpoly} to \eqref{eq:minmeasure} replaces the (difficult) nonconvexity of $f$ and $X$ by the (easier) linearity of the objective of \eqref{eq:minmeasure} and convexity of its feasible set. However, this is done at the cost of introducing a new difficulty: \eqref{eq:minmeasure} is an infinite-dimensional problem, difficult to manipulate numerically.

To address this difficulty, the moment hierarchy replaces the measure $\mu$ by finitely many of its moments $y_\alpha = \int_X x^\alpha \, d\mu(x)$, where $\alpha\in\mathbb{N}^n$.
At order $d$, one keeps the truncated moment sequence $y = (y_\alpha)_{|\alpha| \le 2d}$ and, rather than insisting that the sequence $y$ come from a genuine measure supported on $X$, which is difficult to verify, one imposes a collection of semidefinite constraints that every such measure must satisfy. This yields a tractable semidefinite optimization problem.
Replacing each monomial $x^\alpha$ in $f = \sum_\alpha f_\alpha x^\alpha$ by the corresponding variable $y_\alpha$ turns the objective into the linear form $\sum_\alpha f_\alpha y_\alpha$.
Writing $d_i = \lceil \deg(g_i)/2 \rceil$, 
for any $d \ge \max( \lceil \deg(f)/2 \rceil, d_1, \dots, d_m)$, the order-$d$ moment relaxation reads
\begin{equation}\label{eq:minmeasurek}
  f_d = \min_y \left\{ \sum_\alpha f_\alpha y_\alpha \;:\; y_0 = 1, \; M_d(y) \succeq 0, \; M_{d - d_i}(g_i \, y) \succeq 0, \; i = 1, \dots, m \right\},
\end{equation}
where $M_d(y)$ is the moment matrix, and the $M_{d - d_i}(g_i \, y)$ are the localizing matrices associated with the constraints $g_i \ge 0$.
Each value $f_d$ satisfies $f_d \le f^\star$ and, moreover, the following theoretical guarantee first obtained by Lasserre \cite{lasserreGlobalOptimizationPolynomials2001} holds: 
if one of the constraints is of the form\footnote{
    The result holds more generally under the \emph{Archimedean condition} on $X$: there exists $N>0$ such that the polynomial $N - \|x\|_2^2$ can be written as $\sigma_0 + \sum_i \sigma_i g_i$ where $\sigma_i$ are sum-of-squares polynomials.
} $g_i(x) = N - \|x\|^2$ for some $N$,
then $f_d$ is a nondecreasing sequence that converges to $f^\star$.
In addition, this convergence is \emph{generically finite}: Nie~\citet{nieOptimalityConditionsFinite2014} showed that, if every global minimizer of \eqref{eq:minpoly} meets constraint qualification, strict complementarity, and second-order sufficiency assumptions, then the relaxation is exact at some finite order: there exists a $d^\star$ such that $f_{d^\star} = f^\star$.
These conditions are met for almost every choice of polynomials $f, g_1, \dots, g_m$ of prescribed degrees, which is what makes finite convergence a \emph{generic} phenomenon.

Note that \eqref{eq:minmeasurek} admits a dual problem, which searches for the largest $\lambda$ such that $f - \lambda$ belongs to the degree-$d$ truncated quadratic module
\[
  Q_d(g) = \left\{ \sigma_0 + \sum_{i=1}^m \sigma_i g_i \;:\; \sigma_i \in \Sigma[x], \; \deg(\sigma_i g_i) \le 2d \right\},
\]
where $\Sigma[x]$ denotes the cone of sum-of-squares polynomials.
Finally, the problems \eqref{eq:minmeasurek} and their duals are convex conic (semidefinite) programs; they can thus be solved by off-the-shelf polynomial-time algorithms, such as Interior Point Methods.
Under a Slater condition, we have \emph{strong duality} \ie{} these primal and dual values coincide.

However, here as well there is no free lunch: the moment matrix $M_d(y)$ in \eqref{eq:minmeasurek} has size $\binom{n+d}{n}$, which grows like $\frac{n^d}{d!}$ as $n$ goes to infinity.
In practice, this dependence on dimension $n$ only allows one to solve \eqref{eq:minmeasurek} for $d=1$, and sometimes $d=2$, for an industrial problem.
Exploiting sparsity in the polynomial objective and constraints of problem \eqref{eq:minpoly} helps in solving larger instances \cite{magronSparsePolynomialOptimization2023}.
Finite convergence may occur for surprisingly low values of $d$, such as $d=1$ or $d=2$ for Optimal Power Flow problems \cite{wangCertifyingGlobalOptimality2022}.
Another relevant approach is recovering minimizers $x^\star$ of problem \eqref{eq:minpoly} from (approximate) solutions to problems \eqref{eq:minmeasurek}.

\section{O-minimality and tame geometry}
\label{sec:defomin}

\noindent
The arrival of \emph{o-minimality} and \emph{tame geometry} in 1984 can be attributed to three sources:
\begin{itemize}
\item First, van den Dries~\cite{vandenDries84Remarks}  introduced \emph{structures of finite type} as a framework for attacking a version of Tarski's problem. The main idea, as we will see below, is to consider an axiomatic setting which assumes an analogue of the \cref{th:tarskiSeidenberg} and in which the o-minimality property (\cref{lem:o-minimal-property}) holds.
\item  Second, Pillay and Steinhorn~\cite{PillaySteinhorn1984} recognized the analogy with \emph{strongly minimal structures}, coined the term \emph{o-minimal}, and developed much of the basic model-theoretic properties in a series of three papers ~\cite{PillaySteinhorn86I},~\cite{KnightPillaySteinhorn86II} (with Knight), and~\cite{PillaySteinhorn88III}.
\item  Later, it was realized that o-minimality is a good framework for Grothendieck's desired \emph{tame topology} from \emph{Esquisse d'un Programme}~\cite{Grothendieck84}; see~\cite{Rolin08_Establishing} for commentary.
\end{itemize}

\noindent
In this section, we will give a brief introduction to o-minimality, describe some main examples of o-minimal structures, and present some of the key properties which make an o-minimal framework conducive to \emph{convergence guarantees}.

\medskip\noindent
The standard reference for o-minimality is~\cite{driesTameTopologyOminimal1998}; other references include the introductions ~\cite{coste2000introduction,MillerFields12}, the surveys~\cite{van1999minimal},~\cite{rolin2011survey}, and the paper~\cite{van1996geometric} which serves as an introduction for geometers.

\subsection{o-minimal structures}
The definition of \emph{o-minimal structure} can be viewed as a generalization of the definition of \emph{semialgebraic set}.

\begin{definition}
    A \textbf{structure} on $\R$ is a sequence $\mathcal{R}=(\mathcal{R}_n)_{n\geq 0}$ such that for each $n\geq 0$:
    \begin{itemize}
    \item[(S1)] $\mathcal{R}_n$ is a boolean algebra of subsets of $\R^n$;
    \item[(S2)] if $A\in\mathcal{R}_n$, then $\R\times A$ and $A\times\R$ belong to $\mathcal{R}_{n+1}$;
    \item[(S3)] $\{(x_1,\ldots,x_n)\in\R^n:x_1=x_n\}\in\mathcal{R}_n$;
    \item[(S4)] if $A\in\mathcal{R}_{n+1}$, then $\pi(A)\in\mathcal{R}_{n}$, where $\pi:\R^{n+1}\to\R^n$ is the projection map on the first $n$ coordinates.
    \end{itemize}
    A structure $\mathcal{R}$ on $\R$ is \textbf{o-minimal} if it satisfies:
    \begin{itemize}
    \item[(O1)] $\{(x,y)\in\R^2:x<y\}\in\mathcal{R}_2$;
    \item[(O2)] the sets in $\mathcal{R}_1$ are exactly the finite unions of intervals and points.
    \end{itemize}
    Finally, to simplify our discussion, we shall assume all structures satisfy:
    \begin{itemize}
    \item[(S5)] $\{a\}\in\mathcal{R}_1$ for every $a\in\R$;
    \item[(S6)] $\mathcal{R}_3$ contains the graphs of addition and multiplication $+,\cdot:\R^2\to\R$.
    \end{itemize}
    \end{definition}

\noindent
Here are some comments and conventions concerning the definition:
\begin{itemize}
\item     We say a set $X\subseteq\R^n$ is \textbf{definable} (in the structure $\mathcal{R}$) if $X\in\mathcal{R}_n$.
\item We say a function $f:X\to\R^m$ is \textbf{definable} if its graph is
a~definable set.
\item We can view Axiom (S4) as imposing an analogue of Tarski-Seidenberg to hold for the definable sets.
\item Axioms (S1)-(S4) secretly say that the definable sets of a structure are precisely those sets which are definable via some \emph{first-order $\mathcal{L}$-formula $\varphi(x)$} where $\mathcal{L}$ is some \emph{first-order language}; for more on this connection to \emph{first-order logic}, see~\cite[Appendix B]{ADAMTT}.
\item Axiom (S5) says that all \emph{parameters} are automatically definable.
\item Axiom (S6) together with (S5) imposes the condition that the semialgebraic sets are definable in every o-minimal structure; one can also consider smaller o-minimal structures -- such as ``$(\R;<)$'' and ``$(\R;<,+)$'' -- although these are less immediately relevant for the purposes of training DNNs.
\item Axiom (O1) ensures that the ordering is definable; this implies that every interval is guaranteed to be definable, thanks to (S5). In fact, in the presence of (S6) the axiom (O1) is redundant in the sense that we can definably recover the ordering from the field structure via: $x<y\Leftrightarrow \exists z(z\neq 0 \ \& \ z^2=y-x)$ (we won't use this fact).
\item Axiom (O2) imposes an analogue of \cref{lem:o-minimal-property} to hold for the definable sets. Note that this is the only axiom which imposes a restriction on the definable sets.
\item By default, in this document the adjective \emph{tame} when applied to some set or function in $\R^n$ usually means \emph{definable in some o-minimal structure on $\R$}.
\end{itemize}

\medskip\noindent
The quintessential example of an o-minimal structure is the semialgebraic setting of \Cref{sec:semialg}.

\begin{example}[$\R_{\operatorname{alg}}$: the semialgebraic sets]\label{ex:semialgebraic}
For each $n\geq 0$, let $\mathcal{S}_n$ denote the collection of all semialgebraic subsets of $\R^n$; see \cref{def:semialg}. We claim that $\mathcal{S}=(\mathcal{S}_n)_{n\geq 0}$ is an o-minimal structure on $\R$ and we denote it by $\R_{\operatorname{alg}}$. By assumptions (S5) and (S6), $\R_{\operatorname{alg}}$ is the smallest o-minimal structure we will consider.
\end{example}
\noindent
We note that \cref{semialgebraic_composability} for semialgebraic functions
extends to the more general setting of functions definable in any structure.

\begin{lemma}[Composability]\label{definable_composability}
In any structure, the composition of definable functions is definable.
\end{lemma}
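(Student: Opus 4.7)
The plan is to transcribe the argument used for \cref{semialgebraic_composability} in the semialgebraic case, replacing the appeal to the \cref{th:tarskiSeidenberg} by the projection axiom (S4) of a structure. Given definable functions $f\colon\R^m\to\R^n$ and $g\colon\R^n\to\R^p$, the goal is to realise $\Gamma(g\circ f)\in\mathcal{R}_{m+p}$ as a boolean combination and projection of the graphs $\Gamma(f)\in\mathcal{R}_{m+n}$ and $\Gamma(g)\in\mathcal{R}_{n+p}$, which are given to us by hypothesis.

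Concretely, I would first lift both graphs to the common ambient space $\R^{m+n+p}$: iterated applications of (S2) show that $A:=\Gamma(f)\times\R^{p}$ and $B:=\R^{m}\times\Gamma(g)$ both belong to $\mathcal{R}_{m+n+p}$, and hence by (S1) so does
\[
A\cap B \ = \ \{(x,y,z)\in\R^{m+n+p}:f(x)=y\text{ and }g(y)=z\}.
\]
Projecting away the ``middle'' block $y\in\R^{n}$ would then produce
\[
\Gamma(g\circ f) \ = \ \{(x,z)\in\R^{m+p}:\exists y\in\R^{n},\ (x,y,z)\in A\cap B\},
\]
as required.

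The main technical point, and the only place where care is needed, is that axiom (S4) only removes the \emph{last} coordinate, whereas the projection above must remove a middle block. I would dispatch this by first establishing the auxiliary closure of every structure under permutation of coordinates, so that the middle projection decomposes as a permutation followed by $n$ applications of (S4). For this auxiliary fact, from (S3) together with (S1) and (S2) one obtains the binary diagonals $\{x_i=x_j\}\in\mathcal{R}_k$ for arbitrary $1\le i<j\le k$ (start with $\{x_1=x_k\}$ from (S3), pad on the right with $\R$ via (S2), then shift on the left with (S2) to translate the equality through all coordinate pairs). Given a permutation $\sigma$ of $\{1,\ldots,n\}$ and a definable $X\subseteq\R^{n}$, the set $\R^{n}\times X\in\mathcal{R}_{2n}$ intersected with the definable equalities $\{y_{i}=x_{\sigma(i)}\}$ and projected off its last $n$ coordinates via $n$ applications of (S4) then yields $\sigma(X)\in\mathcal{R}_{n}$. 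This preliminary is the only nontrivial piece of bookkeeping; the substantive content of the lemma is simply the first-order-logic identity that function composition corresponds to conjunction followed by existential quantification at the level of graphs.
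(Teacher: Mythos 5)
Your proof is correct and follows the same route as the paper, which simply instructs the reader to repeat the semialgebraic argument (lift both graphs to $\R^{m+n+p}$, intersect, and project off the middle block) with axiom (S4) in place of the Tarski--Seidenberg theorem. The one point where you go beyond the paper --- observing that (S4) only removes the \emph{last} coordinate and therefore first establishing closure of any structure under coordinate permutations from (S1)--(S4) --- is precisely the bookkeeping the paper's one-line proof leaves implicit, and your derivation of that auxiliary fact is sound.
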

\begin{proof}
The same as the proof of \cref{semialgebraic_composability}, except replacing the role of the \cref{th:tarskiSeidenberg} with axiom (S4). Note that this does not require (O1), (O2), (S5) or (S6).
\end{proof}
\noindent
The o-minimality axiom (O2) prohibits ``infinite oscillation''
such as in \cref{fig:sine}.
Hence, sets and functions exhibiting this behavior cannot be definable in any o-minimal structure.

\begin{nonExample}\label{nex:oscillations}
It is clear that no o-minimal structure can define the set $\mathbb{Z}$ or the function $\sin:\R\to\R$. Moreover, one can show that the structures $(\R;<,+,\cdot,\mathbb{Z})$ and $(\R;<,+,\cdot,\sin)$ actually define the entire \emph{projective hierarchy} of subsets of $\R$ in the sense of descriptive set theory~\cite[Exercise 37.6]{Kechris95}; see also~\cite[1.2.6]{driesTameTopologyOminimal1998}.
\end{nonExample}

\begin{figure}[t]
  \centering
    \begin{tikzpicture}
  \begin{axis}[
      trig format plots=rad,
      width=8cm,
      height=6cm,
      xmajorgrids,
      ymajorgrids,
      xmin=-0.04, xmax=1,
      ymin=-1.1, ymax=1.1,
      legend pos=south east,
      legend cell align=left,
    ]
    \addplot[
      tomato,
      no marks,
      samples=500,
      domain=-3:0,
    ] ({10^x}, {sin(1/(10^x))});
    \addlegendentry{$t \mapsto \sin t^{-1}$}
    \addplot[
      chartreuse,
      no marks,
      line width=2.5pt,
    ] coordinates {(0,-1) (0,1)};
    \addlegendentry{limit set}
  \end{axis}
\end{tikzpicture}
    \caption{Topologist's sine curve $t \mapsto \sin t^{-1}$ over $(0, 1]$. This function exhibits infinite oscillations, it is not definable in any o-minimal structure; see also \cref{nex:oscillations}.
        \vspace{-1ex}}
    \label{fig:sine}
\end{figure}
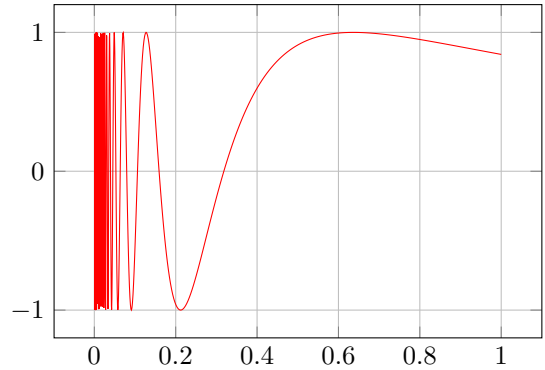

\noindent
When working with subsets of $\R$, there are various notions of ``smallness'' one can
consider. In the setting of sets definable in o-minimal structures, many of these notions coincide.

\begin{smallsetstheorem*}\label{th:smallsets}
Suppose $X\subseteq\R$ is definable in an o-minimal structure. The following are equivalent:
\begin{enumerate}
\item $X$ is finite;
\item $X$ is discrete;
\item $X$ is countable;
\item $X$ is nowhere dense in $\R$;
\item $X$ has cardinality less than the continuum;
\item $X$ is meager in $\R$;
\item $X$ has Lebesgue measure zero;
\item $X$ has empty interior in $\R$.
\end{enumerate}
\end{smallsetstheorem*}
\begin{proof}
    The proof is left as an exercise for the reader.
    Definitions of the relevant topological notions can be found in \cite{Eng89}.
\end{proof}

\noindent
In many arguments in classical mathematics, one often has to appeal to the \emph{Axiom of Choice (AC)} for the existence of certain objects, e.g., a selection of a set-valued map. In the o-minimal setting, we have the following statement, where we use the fiber notation from \Cref{sec:conventions}.

\begin{definableChoice*}\label{definableChoice}\cite[Chap. 6(1.2)]{driesTameTopologyOminimal1998}
If $X\subseteq\mathbb{R}^{m+n}$ is definable in an o-minimal structure, then there exists a~definable map $\rho:\pi_m X\to\mathbb{R}^n$ such that:
\begin{enumerate}
\item $\rho(a)\in X_a$ for every $a\in\pi_mX$, and thus $\Gamma(\rho)\subseteq X$, and
\item for every $a,a'\in \mathbb{R}^m$, if $X_a=X_{a'}$, then $\rho(a)=\rho(a')$.
\end{enumerate}
\end{definableChoice*}

\subsection{Main examples}
\label{sec:examples}

In this subsection we present a few of the known o-minimal structures which are most relevant to training DNNs.
We provide a diagram of their relations in \cref{fig:ominrelation}.

\subsubsection{$\R_{\operatorname{alg}}$ (Semialgebraic sets)}
Recall that the semialgebraic sets are the smallest o-minimal structure that
we consider; see \cref{ex:semialgebraic}.

\subsubsection{$\R_{\operatorname{an}}$ (Globally subanalytic geometry)}
Here we define the structure $\R_{\operatorname{an}}$ of \emph{restricted real analytic functions}.

\begin{definition}
Let $I=[-1,1]\subset\R$.
We say $f:\R^n\to\R$ is \textbf{restricted real analytic} if:
\begin{enumerate}
\item the restriction $f|_{I^n}:I^n\to\R$ is real analytic, i.e., there exists an open neighbourhood $U\supseteq [-1,1]^n$ and an extension $\tilde{f}:U\to \R$ of $f|_{I^n}$ such that $\tilde{f}$ is real analytic on $U$, and
\item $f$ is identically zero outside $I^n$.
\end{enumerate}
Let $\R_{\operatorname{an}}$ denote the structure on $\R$ generated by all restricted real analytic functions.
\end{definition}

\begin{example}
The restricted functions $\exp:[-1,1]\to\R$ and $\sin:[0,2\pi]\to\R$, as well as the total function $\arctan:\R\to\R$ are definable in $\R_{\operatorname{an}}$; none of these are definable in $\R_{\operatorname{alg}}$.
\end{example}

\begin{theorem}[Gabrielov]
The structure $\R_{\operatorname{an}}$ is o-minimal.
\end{theorem}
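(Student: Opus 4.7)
The plan is to establish o-minimality of $\R_{\operatorname{an}}$ by proving a quantifier elimination result for a suitable expansion of the language. Axioms (S1)--(S6) and (O1) are satisfied by construction from the definition of the structure generated by the restricted real analytic functions, so the only nontrivial point is axiom (O2): every definable subset of $\R$ is a finite union of points and open intervals.

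First, I would enrich the language by adjoining a symbol $D : \R^2\to\R$ for \emph{restricted division}, where $D(x,y)=x/y$ whenever $|x|\leq |y|$ and $y\neq 0$, and $D(x,y)=0$ otherwise. Denote the resulting structure by $\R_{\operatorname{an},D}$; note it defines the same sets as $\R_{\operatorname{an}}$. The core technical step, following Denef--van den Dries, is to prove that the theory of $\R_{\operatorname{an},D}$ admits \emph{quantifier elimination}: every formula is equivalent to a quantifier-free one. The key tool is the Weierstrass preparation and division theorems for restricted analytic functions, which let one write, locally around any point of $[-1,1]^n$, a restricted analytic function $f(x_1,\ldots,x_n)$ as a unit times a distinguished polynomial $x_n^d + a_{d-1}(x')x_n^{d-1} + \cdots + a_0(x')$ with restricted analytic coefficients in the remaining variables $x'=(x_1,\ldots,x_{n-1})$. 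This plays the role that polynomials play in the proof of Tarski--Seidenberg and allows one to eliminate an existential quantifier over $x_n$ by analyzing the finitely many real roots of such a distinguished polynomial; restricted division is needed to cancel the unit factors and to express the coefficients of quotient polynomials obtained during the elimination.

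Granting the quantifier elimination step, I would verify (O2) directly. Let $X\subseteq\R$ be definable; by QE it is a Boolean combination of sets of the form $\{t\in\R : g(t)\diamond 0\}$ with $\diamond\in\{<,=\}$, where $g:\R\to\R$ is built from restricted analytic functions, polynomials, and restricted division by \cref{definable_composability}. Outside $[-1,1]$ every restricted analytic function vanishes, so the behavior of such a $g$ at infinity is piecewise semialgebraic, and the problem reduces to the compact interval $[-1,1]$. On that interval $g$ is piecewise equal to a real analytic function on a neighborhood of $[-1,1]$, and by the identity theorem a non-identically-zero real analytic function on a connected neighborhood of a compact interval has only finitely many zeros in that interval. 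Hence each generator $\{g\diamond 0\}$ is a finite union of points and intervals, and a finite Boolean combination of such sets remains of this form.

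The main obstacle is the quantifier elimination step itself. The naive class of restricted analytic functions is not closed under the operations produced by elimination: the implicit function theorem produces functions that are only defined on small open neighborhoods, whereas the elimination algorithm needs function symbols globally defined on $\R^n$. Introducing $D$ and working in $\R_{\operatorname{an},D}$ is precisely the device that remedies this, but the induction carrying out the elimination must carefully combine Weierstrass preparation with case analysis on where the leading coefficients of the distinguished polynomials vanish. This is the essential content of Gabrielov's theorem of the complement for subanalytic sets, recast model-theoretically by Denef--van den Dries into the quantifier elimination result sketched above.
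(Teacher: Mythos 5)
Your proposal is a correct sketch, but it follows the second of the two routes the paper alludes to rather than the first. The paper's own ``proof'' is only an attribution: o-minimality of $\R_{\operatorname{an}}$ is deduced from Gabrielov's 1968 \emph{theorem of the complement} for subanalytic sets, which directly supplies closure under complementation and projection (axioms (S1)--(S4)); axiom (O2) then follows from the known local structure of one-dimensional subanalytic sets --- this is what van den Dries meant by ``almost obvious, once observed.'' You instead reconstruct the Denef--van den Dries treatment, which the paper explicitly cites as ``a different treatment'': adjoin restricted division $D$, prove quantifier elimination for $\R_{\operatorname{an},D}$ via Weierstrass preparation and division, and then read off (O2) from the finiteness of zeros of one-variable analytic terms. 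Your closing argument for (O2) is essentially right, though note that a one-variable term built with $D$ need not be analytic on all of $[-1,1]$; one needs an induction on term complexity showing the set of non-analyticity points is finite before invoking the identity theorem on each piece --- you gesture at this but it deserves to be said. The comparison is worth recording: Gabrielov's geometric route is shorter if one takes the complement theorem as given, while the quantifier-elimination route yields strictly more (model completeness, an explicit axiomatization, and the polynomial boundedness of $\R_{\operatorname{an}}$ with field of exponents $\Q$ that the paper uses later to show $\exp$ is not definable in $\R_{\operatorname{an}}$). You correctly identify the QE step as the real content; as a blind sketch this is a faithful outline of a known complete proof, not a gap.
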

\begin{proof}[Remark about proof]
This is attributed to Gabrielov who proved in 1968 a \emph{theorem of the complement} for subanalytic sets~\cite{gabrielov1968projections}.
In 1986, van den Dries noticed that o-minimality of $\R_{\operatorname{an}}$ is ``almost obvious, once observed'' given Gabrielov's result~\cite{vandenDries86Ran}.
A different treatment was given by Denef and van den Dries in~\cite{denef1988p}.
\end{proof}

\begin{lemma}\label{exp_nondef_Ran}
The exponential function $\exp:\R\to\R$ is not definable in $\R_{\operatorname{an}}$.
\end{lemma}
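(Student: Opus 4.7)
The plan is to show that every unary function definable in $\R_{\operatorname{an}}$ is polynomially bounded at $+\infty$, and then observe that $\exp$ grows faster than any polynomial.

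First, I would extend the semialgebraic asymptotic lemma (\cref{lem:semialgebraic_asymptotics}) to $\R_{\operatorname{an}}$: if $f:\R\to\R$ is definable in $\R_{\operatorname{an}}$ and not eventually zero, then there exist $c\in\R\setminus\{0\}$ and $q\in\Q$ such that $f(t)\sim ct^q$ as $t\to+\infty$. The argument would first invoke the Monotonicity Theorem (available in any o-minimal structure) so that $f$ is eventually monotone and of class $C^k$ on some interval $(M,+\infty)$. Then I would make the change of variables $u=1/t$ and study $g(u):=f(1/u)$ as $u\to 0^+$. Because the function $t\mapsto 1/t$ is semialgebraic, and $\R_{\operatorname{alg}}\subseteq\R_{\operatorname{an}}$, the function $g$ is definable in $\R_{\operatorname{an}}$ on a bounded right neighborhood of $0$. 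The crucial point is that on such a neighborhood, only the \emph{restricted} analytic generators of $\R_{\operatorname{an}}$ contribute nontrivially (the cutoff outside $[-1,1]^n$ does not interfere), so $g$ has a genuine subanalytic germ at $0^+$ and hence admits a Puiseux expansion $g(u)=c\,u^{q}+o(u^{q})$ with $c\neq 0$ and $q\in\Q$. Translating back yields $f(t)\sim c\,t^{-q}$, and in particular $|f(t)|\le t^N$ for sufficiently large $t$ with $N:=\lceil|{-q}|\rceil$.

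Second, compare with $\exp$: for every $c\in\R$ and every $q\in\Q$ we have $e^{t}/(c\,t^{-q})\to+\infty$ as $t\to+\infty$. Therefore $\exp$ admits no asymptotic of the form $ct^{-q}$, contradicting the classification above. If $\exp$ were definable in $\R_{\operatorname{an}}$, the classification would apply to it, a contradiction. Hence $\exp\notin\R_{\operatorname{an}}$.

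The main obstacle is the Puiseux step, which rests on nontrivial structure theory of the restricted analytic functions, essentially Gabrielov's theorem of the complement together with quantifier elimination / model completeness results of van den Dries and of Denef--van den Dries \cite{vandenDries86Ran,denef1988p}. In an expository treatment I would either cite these classical results, or bypass the exact form of the asymptotic by invoking Miller's theorem that $\R_{\operatorname{an}}$ is \emph{polynomially bounded} (every definable unary function is eventually dominated by some $t^{N}$); this weaker fact is already more than enough to rule out $\exp$ by its super-polynomial growth.
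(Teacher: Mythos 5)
Your proposal is correct and follows essentially the same route as the paper, which likewise reduces the claim to the fact that $\R_{\operatorname{an}}$ is polynomially bounded (\cref{polynomially_bounded_Ran}) and then observes that $\exp$ violates this, exactly as in \cref{lem:exp_not_semialgebraic}. Your first, stronger route via Puiseux asymptotics with rational exponents is also valid (it is the statement that $\R_{\operatorname{an}}$ has field of exponents $\Q$), but it rests on the same nontrivial inputs of Gabrielov and Denef--van den Dries that you correctly cite, and the weaker polynomial-boundedness fallback you mention is precisely what the paper invokes.
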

\begin{proof}[Remark about proof] This is true for essentially the same reason as \cref{lem:exp_not_semialgebraic}, namely, the structure $\R_{\operatorname{an}}$ is \emph{polynomially bounded} in the sense of \cref{polynomially_bounded_Ran} below.
\end{proof}

\begin{figure}
    \centering
    \resizebox{1\textwidth}{!}{
      \tikzsetnextfilename{ominstructures}
      \begin{tikzpicture}[
    yscale=0.8,
    every node/.style={inner sep=1pt},
    regionlabel/.style={text=c1},
    >={Latex[scale=1.5]},
    transform shape=false
  ]

  \node[font=\large] (Ralg)     at (0,    0)   {$\mathbb{R}_{\mathrm{alg}}$};
  \node[font=\large] (Rarctan)  at (3.8,  0)   {$\mathbb{R}_{\mathrm{arctan}}$};
  \node[font=\large] (Ran)      at (7.8,  0)   {$\mathbb{R}_{\mathrm{an}}$};
  \node[font=\large] (RG)       at (10.8, 0)   {$\mathbb{R}_{\mathcal{G}}$};
 
  \node[font=\large] (RalgR)    at (0,    3)   {$\mathbb{R}_{\mathrm{alg}}^{\mathbb{R}}$};
 
  \node[font=\large] (Rexp)     at (0,    6)   {$\mathbb{R}_{\mathrm{exp}}$};
  \node[font=\large] (Ranexp)   at (7.8,  6)   {$\mathbb{R}_{\mathrm{an,exp}}$};
  \node[font=\large] (RGexp)    at (10.8, 6)   {$\mathbb{R}_{\mathcal{G}\text{,exp}}$};
 
  \node[font=\large] (RPfaff)   at (2.2,  8.8) {$\mathbb{R}_{\mathrm{Pfaff}}$};
  \node[font=\large] (PfaffRan) at (7.8,  8.8) {$\mathrm{Pfaff}(\mathbb{R}_{\mathrm{an}})$};
  \node[font=\large] (PfaffRG)  at (10.8, 8.8) {$\mathrm{Pfaff}(\mathbb{R}_{\mathcal{G}})$};
 
  \node at (5.5, 11) {???};

  \draw[->] (Ralg.east)      -- (Rarctan.west);
  \draw[->] (Rarctan.east)   -- (Ran.west);
  \draw[->] (Ran.east)       -- (RG.west);
 
  \draw[->] (Rexp.east)      -- (Ranexp.west);
  \draw[->] (Ranexp.east)    -- (RGexp.west);
 
  \draw[->] (RPfaff.east)    -- (PfaffRan.west);
  \draw[->] (PfaffRan.east)  -- (PfaffRG.west);

  \draw[->] (Ralg.north)     -- (RalgR.south);
  \draw[->] (RalgR.north)    -- (Rexp.south);
 
  \draw[->] (Ran.north)      -- (Ranexp.south);
  \draw[->] (Ranexp.north)   -- (PfaffRan.south);
 
  \draw[->] (RG.north)       -- (RGexp.south);
  \draw[->] (RGexp.north)    -- (PfaffRG.south);

  \draw[<-] (RPfaff.south west) -- (Rexp.north east);
 
  \draw[<-] (RPfaff.south)  -- (Rarctan.north);

  \draw[dashed, line width=1.2pt, c1]
    (-1, 10) -- (11.5, 10);
 
  \draw[dashed, line width=1.2pt, c5]
    (-1, 4.5) -- (11.5, 4.5);

  \node[anchor=east] at (-2, 6)  {field of exponents $\mathbb{R}$};
  \node[anchor=east] at (-2, 0)  {field of exponents $\mathbb{Q}$};
 
  \draw[decorate, decoration={brace, amplitude=10pt}] (12.8,10) -- (12.8,0);
  \draw[decorate, decoration={brace, amplitude=10pt}] (12.8,12) -- (12.8,10.1);
  \draw[decorate, decoration={brace, amplitude=10pt}] (11.7,4.5) -- (11.7,0);
  \draw[decorate, decoration={brace, amplitude=10pt}] (-1.5,3) -- (-1.5,9);

  \node[regionlabel, anchor=west] at (13.3, 11)   {transexponential};
    \node[regionlabel, anchor=west, align=left] at (13.4, 5)  {exponentially \\ bounded};
  \node[regionlabel, anchor=west, rotate=-90, color=c5] at (12.5, 5)  {polynomially bounded};

\end{tikzpicture}
    }
    \caption{Inclusion relations between the various o-minimal structures we consider. There is an arrow between two structures $\mathcal{R}_1\to\mathcal{R}_2$ to indicate $\mathcal{R}_1\subseteq\mathcal{R}_2$; see also \cref{rem:separations}. The red dashed line separates exponentially bounded structures from transexponential ones, whereas the green dashed line highlights that polynomially bounded structures form a subclass of exponentially bounded structures; see~\cref{polynomially_bounded_Ran}. The structures $\R_{\operatorname{alg}}$, $\R_{\operatorname{arctan}}$, $\R_{\operatorname{an}}$ and $\R_{\mathcal{G}}$ have the field of exponents $\Q$, whereas the remaining structures in this figure have the field of exponents $\R$.
    }%

    \label{fig:ominrelation}
\end{figure}

\subsubsection{$\R_{\exp}$ (subexponential geometry)} We now turn to the structure from Tarski's Problem:
\[
\R_{\exp} \ %
:= \  \text{the smallest structure which contains $\exp:\R\to\R$}.
\]

\begin{theorem}[Wilkie]\label{Rexp_ominimal}\cite{wilkie1996model}
The structure $\R_{\exp}$ is o-minimal.
\end{theorem}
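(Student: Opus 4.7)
The plan is to follow Wilkie's strategy, which factors the problem into (i) a~\emph{model completeness} reduction, cutting down all definable sets to a~manageable geometric form, and (ii) an~analytic \emph{finiteness bound} coming from Khovanskii's theory of Pfaffian systems. By~axiom~(O2), o-minimality of $\R_{\exp}$ is equivalent to showing that every subset of $\R$ definable in $\R_{\exp}$ is a~finite union of points and intervals, so the ultimate target is a~one-variable finiteness statement.

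The first step is to prove model completeness: every formula in the language $(<,+,\cdot,\exp)$ is equivalent to an~existential one. Concretely, the aim is to show that every set $X \subseteq \R^n$ definable in $\R_{\exp}$ is the projection $\pi(V)$ of the real zero set $V \subseteq \R^{n+m}$ of some \emph{exponential polynomial}, \ie{} an~expression $p(x, y, \exp(q_1(x,y)), \dotsc, \exp(q_k(x,y)))$ with $p, q_i \in \R[X, Y]$. This reduces to a~\emph{theorem of the complement}: the complement of such a~projection is itself a~projection of the same kind. I~would proceed by induction on the syntactic complexity of formulas, with the critical step being a~stratification of exponential varieties into smooth pieces via the implicit function theorem, so that the boundary of each projection can be re-expressed as a~union of lower-dimensional exponential varieties and negation becomes existentially encodable.

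The second step invokes Khovanskii's bound. Since $\exp$ satisfies the Pfaffian equation $(\exp)' = \exp$, iterated compositions of $\exp$ with polynomials form a~Pfaffian chain, and Khovanskii's theorem supplies an~explicit upper bound on the number of nondegenerate real solutions of a~system of $n$ equations in $n$ variables built from such functions. Applying this to the exponential varieties $V$ obtained from model completeness yields a~uniform finiteness bound for their connected components. A~further induction on dimension---pushing the bound through $\pi$ by cell decomposition of $V$ followed by a~fiber-counting argument---then shows that every projection to $\R$ has finitely many connected components, which in $\R$ forces it to be a~finite union of points and intervals. This closes the argument.

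The hardest part will be the theorem of the complement. Naive negation of an~existential statement yields a~universal quantifier, and without further structure one has no hope of re-existentializing this in the presence of a~transcendental function such as $\exp$; this is precisely the issue that stalled earlier attempts on Tarski's problem. Wilkie's key insight is that Khovanskii-type bounds interact with a~Morse-theoretic analysis of exponential varieties in just the right way to allow the rewriting. Threading this through cleanly---in particular, tracking regularity conditions through the inductive step and verifying that boundary strata remain expressible by exponential polynomials of controlled complexity---is where the real technical work lies and where I~would expect to spend the most effort.
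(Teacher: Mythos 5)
Your outline faithfully reproduces the architecture of Wilkie's actual argument in \cite{wilkie1996model}: first model completeness, showing that every $\R_{\exp}$-definable set is the projection of the zero set of an exponential polynomial, and then Khovanskii's finiteness theorem for Pfaffian systems to bound the number of connected components, which in one variable yields axiom (O2). The paper itself offers no proof at all here --- the theorem is stated with a bare citation --- so your sketch is, if anything, more informative than the source text, and you have correctly identified both the division of labor between the logical and the analytic halves and the historical sticking point (re-existentializing the complement in the presence of a transcendental function).

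That said, what you have written is a roadmap rather than a proof, and the gap sits exactly where you say it does: the model completeness step. A genuine ``induction on syntactic complexity'' with ``stratification via the implicit function theorem'' does not come close to delivering the theorem of the complement; Wilkie's actual argument requires a delicate interplay between Khovanskii-type component bounds and the geometry of exponential varieties (controlling the complexity of boundary strata uniformly in parameters), and occupies the bulk of a long paper. Since no expository treatment could reasonably be expected to supply this, the honest assessment is that your proposal is the correct proof strategy with its hardest step acknowledged but deferred to \cite{wilkie1996model} --- which is precisely what the paper does, only more explicitly.
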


\noindent
\Cref{Rexp_ominimal} may be regarded as an answer to a \emph{geometric version} of Tarski's Problem, however an answer to the original Tarski's Problem (decidability of $\R_{\exp}$) is an open question related to the \emph{real Schanuel's Conjecture}~\cite{MacintyreWilkie96}. The first application of \cref{Rexp_ominimal} to neural networks is in
~\cite{macintyre1993finiteness}.%

\begin{lemma}\label{cor:table_exp}
Each of the activation and loss functions in \cref{table:definable}, with the exception of $\ \operatorname{GELU}(x)$ and $\arctan(x)$, is definable in the  structure $\R_{\exp}$.
\end{lemma}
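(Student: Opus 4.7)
The plan is to argue uniformly that every activation or loss function in the table other than $\operatorname{GELU}$ and $\arctan$ can be expressed as a composition of $\exp$, the polynomial operations (which are in $\R_{\operatorname{alg}}$), and piecewise case distinction on semialgebraic predicates; closure under each of these operations inside $\R_{\exp}$ will then finish the proof.

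First, I would record three closure properties of $\R_{\exp}$ that I will use as black boxes. (i)~Every semialgebraic function is definable in $\R_{\exp}$, since $\R_{\operatorname{alg}}\subseteq\R_{\exp}$ by axioms (S5)--(S6); in particular this covers $\operatorname{ReLU}$, $\operatorname{Softsign}$, squared error, absolute deviation, hinge, and Huber by \cref{rem:table_semialgebraic}. (ii)~Compositions of definable functions are definable in any structure by \cref{definable_composability}; together with (i), sums, products, quotients and compositions with $t\mapsto -t$ or $t\mapsto \beta t$ preserve definability in $\R_{\exp}$. (iii)~Case distinction on a definable set yields a definable function: the proof of \cref{lem:case_distinction} goes through verbatim in $\R_{\exp}$, since it uses only boolean operations and Cartesian products, which are guaranteed by axioms (S1)--(S2).

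Next, I would check that $\log:(0,+\infty)\to\R$ is definable in $\R_{\exp}$. Its graph is
\[
\Gamma(\log) \ = \ \{(x,y)\in\R^{2} : x>0 \ \text{and}\ \exp(y)=x\},
\]
which is obtained from the definable graph of $\exp$ by a permutation of coordinates and intersection with the definable half-plane $\{x>0\}$; closure under these operations follows from (S1)--(S3). Thus $\log$ is definable, and hence so is any function of the form $\log\circ h$ whenever $h$ is definable and positive on its domain.

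It then remains to walk down the list. Logistic activation $1/(1+\exp(-t))$, hyperbolic tangent $(\exp(t)-\exp(-t))/(\exp(t)+\exp(-t))$, softplus $\log(1+\exp(t))$, swish $t/(1+\exp(-\beta t))$ (with $\beta$ a definable parameter), Mish $t\cdot\tanh(\log(1+\exp(t)))$, logistic loss $\log(1+\exp(-yz))$, and binary cross entropy $-(y\log z+(1-y)\log(1-z))$ are all explicit arithmetic compositions of $\exp$, $\log$, and polynomials, so definability in $\R_{\exp}$ follows from (i), (ii), and the previous paragraph. The $\operatorname{ELU}$ is handled by (iii): it is defined on the semialgebraic pieces $\{t\le 0\}$ and $\{t>0\}$ by $\exp(t)-1$ and $t$ respectively, each of which is definable, so case distinction produces a definable function on all of $\R$. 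There is no real obstacle here: the only subtlety is making sure that $\log$ itself is in $\R_{\exp}$ and that case distinction carries over from \cref{lem:case_distinction} to arbitrary structures, both of which follow directly from the axioms.
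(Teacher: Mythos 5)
Your proposal is correct and follows essentially the same route as the paper, which simply remarks that the argument of \cref{rem:table_semialgebraic} goes through with $\exp$ available as an additional primitive; you have merely made explicit the two ingredients the paper leaves implicit, namely that $\log$ is definable in $\R_{\exp}$ via the graph of $\exp$ (a permutation of coordinates plus intersection, which strictly speaking also uses the projection axiom (S4), not just (S1)--(S3)) and that \cref{lem:case_distinction} holds in any structure. Note that the lemma as stated only makes the positive definability claim, so the non-definability of $\operatorname{GELU}$ and $\arctan$, which the paper's remark also addresses via citations, is not something your proof needs to establish.
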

\begin{proof}[Remark about proof] The definability of all the functions (except $\operatorname{GELU}(x)$ and $\arctan(x)$) in $\R_{\exp}$ is a trivial observation that does not depend on \cref{Rexp_ominimal}; the argument proceeds exactly as in \cref{rem:table_semialgebraic} for the semialgebraic functions, using that we now have $e^x$ as an additional primitive at our disposal.
The non-definability of $\operatorname{GELU}(x)$ in $\R_{\exp}$ is nontrivial and follows from the stronger fact that $\operatorname{GELU}(x)$ is not even definable in the larger o-minimal structure $\R_{\operatorname{an},\exp}$ defined below; cf. \cref{erf_not_def_Ranexp}. The non-definability of $\arctan(x)$ follows from~\cite{bianconi1997nondefinability}, as observed in~\cite{MillerFields12}.
\end{proof}

\subsubsection{
$\R_{\operatorname{an},\exp}$ (analytic-exponential geometry)} We now consider the amalgamation:
\[
\R_{\operatorname{an},\exp} \
:= \ \text{the smallest structure which expands both $\R_{\operatorname{an}}$ and $\R_{\exp}$.}
\]

\begin{theorem}[van den Dries, Miller]
The structure $\R_{\operatorname{an},\exp}$ is o-minimal.
\end{theorem}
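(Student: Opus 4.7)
The plan is to establish o-minimality of $\R_{\operatorname{an},\exp}$ by combining the two results already at hand: Gabrielov's theorem for $\R_{\operatorname{an}}$ and Wilkie's theorem (\cref{Rexp_ominimal}) for $\R_{\exp}$. Taking a naive union of their definable collections will not suffice, because definable sets involving \emph{compositions} of restricted analytic functions with the total exponential (and, via inverse functions, with the logarithm on the positive reals) lie outside either structure individually. Following van den Dries and Miller, I would enrich the language with a symbol for a total $\log:\R\to\R$ (extended by $0$ off $(0,\infty)$) and aim for a \emph{model completeness} statement in this expanded language $\mathcal{L}^*$, the point being that $\log$ and $\exp$ together allow one to eliminate iterated compositions cleanly.

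The heart of the argument is to show that every $\mathcal{L}^*$-formula is equivalent in $\R_{\operatorname{an},\exp}$ to an existential $\mathcal{L}^*$-formula. Wilkie's proof of model completeness for $\R_{\exp}$ provides the template: one reduces arbitrary definable sets to projections of zero-sets of \emph{exp-analytic} maps, and then controls their complexity by a Khovanski/Pfaffian-style finiteness bound on the number of non-degenerate real solutions. To make this go through over restricted analytic functions rather than just polynomials, I would invoke Gabrielov's theorem of the complement at the step where one needs to know that projections and complements of subanalytic sets remain subanalytic. The upshot is a finiteness bound for systems built from restricted analytic primitives and finitely many applications of $\exp$; granted this bound, model completeness follows from Wilkie's formal machinery.

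Once model completeness is established, o-minimality reduces to verifying axiom (O2): every definable $X\subseteq\R$ is a finite union of intervals and points. By model completeness, $X$ is the projection of the zero-set of an exp-analytic system of equations and inequalities in finitely many variables, so the finiteness-of-components lemma from the previous paragraph delivers (O2) directly, while axioms (S1)--(S6) and (O1) are immediate from the construction. The main obstacle is the finiteness-of-components lemma itself, because it requires one to simultaneously manage the transcendental complexity introduced by $\exp$ (handled via Khovanski--Wilkie-style Pfaffian estimates) and the `infinite' complexity of restricted analytic functions (handled via Weierstrass preparation and Gabrielov's subanalytic decomposition, together with the polynomial boundedness of $\R_{\operatorname{an}}$ recorded in \cref{polynomially_bounded_Ran}). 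Everything else in the argument is formal bookkeeping in first-order logic.
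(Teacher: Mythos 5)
The paper gives no proof of this theorem: its ``Remark about proof'' only points to the literature, naming (i) the van den Dries--Miller argument that adapts Wilkie's model-completeness proof for $\R_{\exp}$, and (ii) a shorter proof via valuation theory and Hardy fields. Your sketch is a faithful outline of route (i) --- enlarging the language by $\log$, proving model completeness by combining Gabrielov's theorem of the complement with Khovanskii--Wilkie finiteness for exp-analytic systems, and then reading off axiom (O2) --- so it matches the paper's first cited approach; but be aware that, exactly like the paper, it only \emph{names} the hard steps (the finiteness-of-components bound and Wilkie's model-completeness machinery) rather than establishing them, so what you have is a roadmap to a research-paper-length argument rather than a self-contained proof.
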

\begin{proof}[Remark about proof]
This was first shown in~\cite{van1994real} by adapting Wilkie's proof for $\R_{\exp}$. A~shorter proof is given in~\cite{van1994elementary} which uses \emph{valuation theory} and \emph{Hardy fields}; see \cref{rem:history}.
\end{proof}

\begin{lemma}\cite[Theorem 5.11]{DMM97LEPS}\label{erf_not_def_Ranexp}
The error function $\operatorname{erf}:\R\to\R$ is not definable in $\R_{\operatorname{an},\exp}$.
\end{lemma}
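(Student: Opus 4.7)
The plan is to reduce this to a Liouville-type non-integrability statement in differential algebra, and then appeal to the LE-series machinery of \cite{DMM97LEPS}. The central structural theorem of that paper attaches to every unary function germ at $+\infty$ definable in $\R_{\operatorname{an},\exp}$ a canonical expansion as a logarithmic-exponential (LE) transseries, in a way that commutes with the natural differential structure and respects the subfield generated by restricted analytic, exponential, and logarithmic operations. Suppose, for contradiction, that $\operatorname{erf}$ is definable in $\R_{\operatorname{an},\exp}$. Then the germ of $\operatorname{erf}(t)$ at $+\infty$ gives an LE-series $F$ inside the ``definable'' differential subfield of LE-series, with
\[
F'(t) \;=\; \frac{2}{\sqrt{\pi}}\,e^{-t^2},
\]
and the right-hand side is itself a legitimate LE-series (a single transmonomial).

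It therefore suffices to establish the following Liouville-type fact: the transmonomial $e^{-t^2}$ admits no antiderivative in the ``definable'' subfield of LE-series. I would prove this via a structural decomposition in the style of Liouville--Rosenlicht. Any candidate $F$ with $F' = e^{-t^2}$ must contain the transmonomial $e^{-t^2}$ in its support (otherwise, by the chain rule acting on the transmonomial basis, no $e^{-t^2}$-term can appear in $F'$), so write $F = g\cdot e^{-t^2} + H$, where the monomials appearing in $g$ are products of powers of $t$ and iterated logarithms (no $e^{\pm t^2}$ factor), and $H$ collects transseries whose support is disjoint from the $e^{-t^2}$-chain. Differentiating and matching the $e^{-t^2}$-column forces the linear ODE
\[
g'(t) - 2t\,g(t) \;=\; 1,
\]
which, by variation of parameters, has only the family of solutions $g(t) = e^{t^2}\int_0^t e^{-s^2}\dd s + C\,e^{t^2}$. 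A leading-term analysis in the LE-series monomial ordering shows that any formal transseries solution to $g' - 2tg = 1$ has coefficients growing factorially along the scale $\{t^{-(2n+1)}\}$, and that, more importantly, the only LE-series producing such a $g$ is (up to scaling) the expansion of $e^{t^2}\operatorname{erf}$ itself. Since that series is not reachable by the constructive hierarchy generating the ``definable'' LE-subfield, we obtain the required contradiction.

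The main obstacle is this final step: verifying rigorously that the required $g$ cannot lie in the subfield of LE-series traced out by the definable germs of $\R_{\operatorname{an},\exp}$. This is precisely the content that \cite{DMM97LEPS} handles by a careful induction on term construction: the operations generating $\R_{\operatorname{an},\exp}$-definable germs preserve a distinguished sub-structure of the full LE-series field, which is closed under only limited forms of antidifferentiation and provably excludes the formal primitive of $e^{-t^2}$. Rebuilding that argument in detail would be substantial, so I would invoke their structural theorem as a black box at this final step to conclude that $\operatorname{erf}$ cannot be definable in $\R_{\operatorname{an},\exp}$.
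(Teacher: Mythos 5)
Your proposal is correct in outline and, in substance, takes the same route as the paper: both reduce the statement to the structural theorem of \cite{DMM97LEPS}, which characterizes the germs at $+\infty$ of $\R_{\operatorname{an},\exp}$-definable functions inside the field of LE-series, and both invoke that characterization as a black box at the decisive step (the paper gives nothing beyond a one-sentence remark to this effect). What you add is a Liouville--Rosenlicht-style reduction: decompose a putative antiderivative along the $e^{-t^2}$-column and match terms to obtain $g'(t)-2t\,g(t)=\tfrac{2}{\sqrt{\pi}}$, whose unique formal solution supported away from the $e^{t^2}$-chain is the divergent series $g=-\tfrac{1}{\sqrt{\pi}}\,t^{-1}+\tfrac{1}{2\sqrt{\pi}}\,t^{-3}-\cdots$ with factorially growing coefficients. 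This is a legitimate and clarifying way to identify \emph{which} LE-series would have to lie in the definable subfield, but be aware that it does not by itself bring you closer to a contradiction: the full LE-series field is built over the \emph{formal} Laurent series field $\R((t^{-1}))$ and therefore contains this divergent $g$ outright, so the exclusion of $g$ from the image of the definable germs is precisely the content of \cite[Theorem 5.11]{DMM97LEPS} and cannot be replaced by an informal ``not reachable by the constructive hierarchy'' claim. Two minor imprecisions: $F$ need not contain the monomial $e^{-t^2}$ itself, only some monomial carrying an $e^{-t^2}$ factor (your decomposition $F=g\,e^{-t^2}+H$ already handles this correctly), and the sentence identifying ``the only LE-series producing such a $g$'' with ``the expansion of $e^{t^2}\operatorname{erf}$'' conflates the formal series with the analytic function it asymptotically represents; also one should justify that the coefficient $g$ extracted from a definable $F$ again lies in the definable subfield, which is another point the cited machinery supplies.
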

\begin{proof}[Remark about proof]
This is similar in spirit to the proof of \cref{exp_nondef_Ran}; namely, a characterization is given (using Hardy fields and transseries) of all possible asymptotics which can occur for functions definable in $\R_{\operatorname{an},\exp}$, and it is shown that the asymptotics of the error function $\operatorname{erf}$ is incompatible with these.
\end{proof}

\begin{remark}
One might wonder if in general the amalgamation of any two o-minimal structures is always o-minimal. The answer is ``no'': Le Gal~\cite{LeGal10} constructed an o-minimal structure $\R_h$ such that $\R_{\operatorname{an},h}$ is not o-minimal. In particular, there is no ``largest'' o-minimal structure. However, by Zorn's Lemma, every o-minimal structure is contained inside of a (not necessarily unique) maximal o-minimal structure.

One consequence of this, given our convention of the adjective \emph{tame}, is that the following statement is technically incorrect:
\smallskip
\begin{center}
    \emph{The composition of tame functions is tame.}
\end{center}
\smallskip
Instead, it should be understood that one actually means the following:
\smallskip
\begin{center}
    \emph{The composition of tame functions definable \\in the same o-minimal structure is tame.}
\end{center}
\smallskip
This subtlety will not affect us as the tame functions we consider in this document (the activation and loss functions) are in fact definable in a common o-minimal structure; see \cref{400_AFs_remark}.
\end{remark}

\subsubsection{
$\R_{\operatorname{Pfaff}}$ (the Pfaffian closure of $\R_{\operatorname{alg}}$)} It is natural to wonder whether an o-minimal structure is closed under solutions to differential equations, where the coefficients of said equation are definable in the given structure. In general, the answer is ``no'' since e.g. the function $\sin(x):\R\to\R$ is a~solution to the initial value problem $y''+y=0$ and $y(0)=0$.

\medskip\noindent
In contrast, the following theorem by Speissegger shows that for certain differential equations -- the so-called \emph{Pfaffian equations} -- the solutions always exist in some (possibly larger) o-minimal structure called the \emph{Pfaffian closure} $\operatorname{Pfaff}(\mathcal{R})$ of $\mathcal{R}$.
\medskip
\begin{theorem}[Pfaffian closure]~\cite{Speissegger1999Pfaff}
Let $\mathcal{R}$ be an o-minimal structure. Then there is an o-minimal expansion $\operatorname{Pfaff}(\mathcal{R})$ of $\mathcal{R}$ which is closed under solutions to Pfaffian equations in the following strong sense (where ``definable'' refers to definable in $\operatorname{Pfaff}(\mathcal{R})$). If $U$ is a~definable and connected open subset of $\R^n$, $\omega=a_1dx_1+\cdots+a_ndx_n$ is a $1$-form on $U$ with definable coefficients $a_i:U\to\R$ of class $C^1$, and $L\subseteq U$ is a Rolle leaf of $\omega=0$, then $L$ is also definable.
\end{theorem}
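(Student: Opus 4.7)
The plan is to construct $\operatorname{Pfaff}(\mathcal{R})$ as an iterative closure. Set $\mathcal{R}^{(0)} := \mathcal{R}$, and for $k \geq 0$ let $\mathcal{R}^{(k+1)}$ be the smallest structure containing $\mathcal{R}^{(k)}$ together with every Rolle leaf $L \subseteq U$ of a Pfaffian equation $\omega = 0$ whose coefficients $a_i$ are $C^1$ and definable in $\mathcal{R}^{(k)}$. Finally set $\operatorname{Pfaff}(\mathcal{R}) := \bigcup_{k \geq 0} \mathcal{R}^{(k)}$. The axioms (S1)--(S6) are automatically inherited at each stage and pass to directed unions; the entire difficulty is to verify that o-minimality (axiom (O2)) is preserved, i.e., that adjoining Rolle leaves to an o-minimal structure cannot create a definable subset of $\R$ with infinitely many connected components.

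By a standard model-theoretic bootstrap (analogous to Wilkie's proof that $\R_{\exp}$ is o-minimal, cf.~\cref{Rexp_ominimal}), I would reduce o-minimality of $\mathcal{R}^{(k+1)}$ to a \emph{uniform finiteness} statement: for every definable family $(X_t)_{t \in T}$ with $X_t \subseteq \R$, there is a uniform bound $N$ on the number of connected components of the fibers $X_t$. Using axiom (S4) and cell decomposition in $\mathcal{R}^{(k)}$, one reduces each such family to the projection of a system of $\mathcal{R}^{(k)}$-definable conditions combined with Rolle leaf membership over a single cell. This is precisely the regime in which \emph{Khovanskii's theory of fewnomials and Pfaffian functions} supplies explicit bounds on the number of connected components depending only on combinatorial complexity (the ``format'': number of variables, number of nested Rolle leaves, degree/complexity of the coefficients), not on the specific functions involved. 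The Rolle property is exactly what feeds Khovanskii's inductive step: it plays the role of the classical Rolle theorem in his original estimates for real Pfaffian chains.

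The main obstacle is a \emph{theorem of the complement} for sets built from Rolle leaves: one must show that the complement of a Rolle leaf, and more generally any projection of a boolean combination involving Rolle leaves, admits a finite cell-like description. This has to be proved \emph{before} o-minimality is available, so the argument is necessarily a simultaneous induction on the cell decomposition theorem and the finiteness theorem. Speissegger's technical contribution is the handling of \emph{nested} Rolle leaves $L_1 \supseteq L_2 \supseteq \cdots$, where each $L_{i+1}$ is a Rolle leaf of a Pfaffian form on $L_i$ with coefficients definable in the ambient structure. The key lemma is that any set definable in $\mathcal{R}^{(k+1)}$ can be written in a normal form using finitely many such nested chains, and that the Khovanskii-type bounds survive passage through these nestings. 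Once this technical core is established, o-minimality of every $\mathcal{R}^{(k)}$ follows by induction, and hence o-minimality of the directed union $\operatorname{Pfaff}(\mathcal{R})$ is automatic, as is its closure under solutions to Pfaffian equations whose coefficients are definable in $\operatorname{Pfaff}(\mathcal{R})$ itself (any such equation lives at some finite stage $\mathcal{R}^{(k)}$ and its Rolle leaves are adjoined at stage $k+1$).
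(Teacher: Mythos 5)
The paper does not actually prove this statement: it is quoted verbatim from Speissegger's article \cite{Speissegger1999Pfaff}, whose proof occupies a substantial paper of its own, so the only meaningful comparison is against that source. Your proposal correctly reproduces the outermost skeleton of Speissegger's construction --- the iterated tower $\mathcal{R}^{(0)}\subseteq\mathcal{R}^{(1)}\subseteq\cdots$ obtained by adjoining at each stage the Rolle leaves of one-forms with coefficients definable at the previous stage, the observation that axioms (S1)--(S6) pass to each stage and to the directed union, and the reduction of the whole theorem to o-minimality of a single increment (closure under Pfaffian equations over $\operatorname{Pfaff}(\mathcal{R})$ itself then being automatic because any such equation lives at a finite stage). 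That skeleton is accurate.

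The proposal is nonetheless a roadmap rather than a proof: every step where the mathematical content actually lives is named and then assumed. The theorem of the complement for sets built from nested Rolle leaves, the uniform finiteness of connected components in definable families, and the normal form for stage-$(k+1)$ definable sets are each flagged as ``the key lemma'' with no argument, and these are precisely the results that constitute Speissegger's paper. Moreover, the one concrete engine you do invoke would fail in the stated generality: Khovanskii's fewnomial bounds depend on a combinatorial ``format'' and apply to Pfaffian chains with polynomial (or otherwise explicitly presented) data; they are the right tool for the special case $\operatorname{Pfaff}(\R_{\operatorname{alg}})$ in Wilkie's work on Pfaffian functions, but for an arbitrary o-minimal base $\mathcal{R}$ (say $\R_{\operatorname{an}}$, or a structure with no quantitative description at all) the coefficients $a_i$ are merely definable $C^1$ functions with no format, and no explicit Khovanskii bound exists. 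Speissegger instead obtains the needed finiteness \emph{qualitatively}, from the Rolle property combined with o-minimality of the base (in the tradition of Moussu--Roche and Lion--Rolin: a Rolle leaf meets a definable curve in finitely many points because between consecutive intersections the form must become tangent to the curve), and then feeds this into Wilkie's abstract o-minimality criterion via a theorem of the complement. So the crucial finiteness step rests on the wrong tool, and the argument that would replace it is exactly the part of the proof you have not supplied.
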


\noindent
For our purposes, it suffices to know the following fact about the Pfaffian closure:

\begin{corollary}\label{PfaffIntegral_fact}~\cite{Speissegger1999Pfaff}
Suppose $I\subseteq\R$ is an open interval, $a\in I$ and $g:I\to\R$ is definable in $\operatorname{Pfaff}(\mathcal{R})$ and continuous. Then its antiderivative $f:I\to\R$ given by $f(x):=\int_a^xg(t)dt$ is also definable in $\operatorname{Pfaff}(\mathcal{R})$.
\end{corollary}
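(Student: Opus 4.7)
The plan is to realize the graph of $f$, on each cell of a suitable decomposition of $I$, as a Rolle leaf of an appropriate Pfaffian 1-form, and then to invoke the Pfaffian closure theorem cell-by-cell. Since that theorem requires the coefficients of $\omega$ to be of class $C^1$ while $g$ is merely assumed continuous, the first step is a cell-decomposition reduction.

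First, I would apply cell decomposition in the o-minimal structure $\operatorname{Pfaff}(\mathcal{R})$ to the definable continuous function $g$: this yields a finite partition of $I$ into open subintervals $J_1, \dots, J_k$ and finitely many isolated points such that $g|_{J_i}$ is of class $C^1$ for each $i$. On each $J_i$, set $U_i := J_i \times \R$ and introduce the 1-form $\omega_i := g(x)\, dx - dy$ on $U_i$, whose coefficients $g$ and $-1$ are definable and $C^1$. The integral curves of $\omega_i = 0$ are precisely the graphs of antiderivatives of $g$ on $J_i$, and each such graph $L = \{(x, F(x)) : x \in J_i\}$ is a Rolle leaf: given a $C^1$ curve $\gamma = (\gamma_1, \gamma_2) : [0,1] \to U_i$ with both endpoints in $L$, the vertical deviation $h(t) := \gamma_2(t) - F(\gamma_1(t))$ vanishes at $0$ and at $1$, and the classical Rolle theorem produces $t^* \in (0,1)$ with $h'(t^*) = 0$, which is precisely the statement that $\dot\gamma(t^*) \in \ker(\omega_i)_{\gamma(t^*)}$. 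Thus the Pfaffian closure theorem guarantees that each such leaf is definable in $\operatorname{Pfaff}(\mathcal{R})$.

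To assemble $f$ itself, let $J_{i_0}$ be the cell containing $a$ and select the unique leaf passing through $(a, 0)$, yielding $f|_{J_{i_0}}$ as a definable function. For the remaining cells, propagate inductively: at each junction point $b$ between adjacent cells, the continuity of $f$ forces its value there to equal the one-sided limits from either side, which exist and are definable because $g$ is bounded on a neighborhood of $b$. This pins down the correct constant of integration on the adjacent cell, and \cref{definableChoice} picks out the matching Rolle leaf. After finitely many such steps, $f$ is definable on all of $I$. The only real technical hurdle is bridging the gap between the continuity hypothesis on $g$ and the $C^1$ requirement of the Pfaffian closure theorem; this is exactly what the cell-decomposition and gluing argument achieves.
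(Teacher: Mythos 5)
Your proof is correct, and it fills in a derivation that the paper itself omits: \cref{PfaffIntegral_fact} is stated with only a citation to Speissegger, so the intended argument is exactly the one you give --- realize the graph of the antiderivative as a Rolle leaf of $\omega = g(x)\,dx - dy$ and invoke the Pfaffian closure theorem. Your handling of the mismatch between the continuity hypothesis on $g$ and the $C^1$ requirement on the coefficients of $\omega$ (via the Smooth Monotonicity Theorem applied in the o-minimal structure $\operatorname{Pfaff}(\mathcal{R})$, followed by gluing over finitely many junction points) is the right fix, and your verification of the Rolle condition for the graph leaf via the classical Rolle theorem applied to $h(t)=\gamma_2(t)-F(\gamma_1(t))$ is exactly the standard one. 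Two small simplifications: the value of $f$ at each junction point $b$ is just the real number $\int_a^b g$, which is a definable parameter by axiom (S5), so you do not need to argue that one-sided limits exist or are definable; and the appeal to \cref{definableChoice} is superfluous --- the Pfaffian closure theorem already makes \emph{every individual} Rolle leaf definable, so the specific leaf through the (definable) point $(b, f(b))$ is definable outright, and finitely many such leaves together with finitely many junction points assemble into the definable graph of $f$.
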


\noindent
Let $\R_{\operatorname{Pfaff}}$ denote $\operatorname{Pfaff}(\R_{\operatorname{alg}})$, the Pfaffian closure of the semialgebraic structure. The following implies that $\R_{\operatorname{Pfaff}}$ contains $\R_{\exp}$, and moreover, defines the common activation and loss functions mentioned in \cref{table:definable}:

\begin{corollary}\label{cor:table_pfaff}
The following functions are definable in $\R_{\operatorname{Pfaff}}$:
\begin{enumerate}
\item the exponential function $\exp:\R\to\R$;
\item the error function $\operatorname{erf}:\R\to\R$;
\item the arctangent function $\arctan:\R\to\R$.
\end{enumerate}
In particular, all activation functions and loss functions in \cref{table:definable} are definable in $\R_{\operatorname{Pfaff}}$.
\end{corollary}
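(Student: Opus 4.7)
The plan is to establish each of the three items via Corollary \ref{PfaffIntegral_fact}, using that the semialgebraic setting $\R_{\operatorname{alg}}$ is the starting structure, and then to bootstrap the final ``in particular'' claim from Lemma \ref{cor:table_exp} once we know $\exp \in \R_{\operatorname{Pfaff}}$.

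The easiest two items are $\arctan$ and $\operatorname{erf}$. For $\arctan$, note that $t \mapsto 1/(1+t^2)$ is a semialgebraic continuous function on all of $\R$, hence definable in $\R_{\operatorname{Pfaff}}$, so by Corollary \ref{PfaffIntegral_fact} its antiderivative $x \mapsto \int_0^x \frac{dt}{1+t^2}$ is definable in $\R_{\operatorname{Pfaff}}$; this is precisely $\arctan$. For $\exp$ I would take a detour through $\log$: the function $t \mapsto 1/t$ on $(0,+\infty)$ is semialgebraic and continuous, so by Corollary \ref{PfaffIntegral_fact} the map $\log\colon(0,+\infty)\to\R$, $\log(x) = \int_1^x dt/t$, is definable in $\R_{\operatorname{Pfaff}}$. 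Since the inverse of a definable function is definable (its graph is obtained from the graph by swapping coordinates, which is clearly preserved by the structure axioms), $\exp = \log^{-1}$ is then definable in $\R_{\operatorname{Pfaff}}$. Finally, once $\exp \in \R_{\operatorname{Pfaff}}$, the composition $t \mapsto e^{-t^2}$ is definable and continuous by \cref{definable_composability}, and Corollary \ref{PfaffIntegral_fact} gives that $x \mapsto \int_0^x e^{-t^2}\,dt$ is definable; multiplying by the constant $2/\sqrt{\pi}$ (which is a scalar, hence definable by axiom (S5)) yields $\operatorname{erf}$.

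For the ``in particular'' part, the observation is that once $\exp$ is definable in $\R_{\operatorname{Pfaff}}$, the whole structure $\R_{\exp}$ is contained in $\R_{\operatorname{Pfaff}}$ by minimality of $\R_{\exp}$, so by Lemma \ref{cor:table_exp} every activation and loss function in \cref{table:definable} except $\operatorname{GELU}$ and $\arctan$ is definable in $\R_{\operatorname{Pfaff}}$. The function $\arctan$ was handled above. For $\operatorname{GELU}(t) = \tfrac{t}{2}(1+\operatorname{erf}(t/\sqrt{2}))$, definability follows by \cref{definable_composability} together with closure of the structure under addition and multiplication (axiom (S6)) applied to the definable constituents $\operatorname{erf}$, the identity, and the constants $1/2$, $1/\sqrt{2}$.

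The main obstacle in this argument is the step for $\exp$, since Corollary \ref{PfaffIntegral_fact} cannot be applied directly to an integrand involving $\exp$ itself without a circularity. The inverse-function detour via $\log$ is what resolves this. An alternative would be to invoke the Pfaffian closure theorem directly: the 1-form $\omega = dy - y\,dx$ on $\R^2$ has semialgebraic $C^1$ coefficients, and the graph of $\exp$ is a Rolle leaf of $\omega = 0$, hence definable in $\operatorname{Pfaff}(\R_{\operatorname{alg}})$; either route gives the same conclusion, but the $\log$ route is more elementary given what has already been stated in the paper.
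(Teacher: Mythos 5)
Your proposal is correct and follows essentially the same route as the paper's proof: obtain $\log$ as $\int_1^x dt/t$ via \cref{PfaffIntegral_fact}, pass to $\exp$ as the compositional inverse, then apply \cref{PfaffIntegral_fact} again to $(2/\sqrt{\pi})e^{-t^2}$ and to $1/(1+t^2)$ for $\operatorname{erf}$ and $\arctan$, and finally invoke \cref{cor:table_exp} for the remaining table entries. The aside about realizing the graph of $\exp$ directly as a Rolle leaf of $dy-y\,dx$ is a valid alternative but is not needed.
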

\begin{proof}
(1) The function $1/t:(0,+\infty)\to\R$ is definable in $\R_{\operatorname{alg}}\subseteq\R_{\operatorname{Pfaff}}$.
Then by \cref{PfaffIntegral_fact}, the natural logarithm $\ln:(0,+\infty)\to\R$ is also definable in $\R_{\operatorname{Pfaff}}$:
\[
\ln(x) \ = \ \int_1^x\frac{1}{t}dt.
\]
Thus the compositional inverse $\exp:\R\to\R$ is also definable in $\R_{\operatorname{Pfaff}}$; we leave the proof as an exercise, which goes along the lines of \cref{semialgebraic_composability}.

(2) It follows from (1) that the function $(2/\sqrt{\pi})\exp(-t^2):\R\to\R$ is also definable in $\R_{\operatorname{Pfaff}}$. Applying \cref{PfaffIntegral_fact} again shows that the error function $\operatorname{erf}:\R\to\R$ defined by:
\[
\operatorname{erf}(x) \ = \ \frac{2}{\sqrt{\pi}}\int_0^x\exp(-t^2)dt
\]
is also definable in $\R_{\operatorname{Pfaff}}$.

(3) The definability of $\arctan$ follows likewise using the formula:
\[
\arctan(x) \ = \ \int_0^x\frac{1}{t^2+1}dt.
\]

The final claim now follows since $\operatorname{GELU}$ and $\arctan$ are definable in $\R_{\operatorname{Pfaff}}$ by (2) and (3), and all other functions presented in \cref{table:definable} are already definable in $\R_{\exp}$; see \cref{cor:table_exp}.
\end{proof}

\begin{remark}\label{400_AFs_remark}
We now provide further heuristic justification for the claim:
\begin{quote}
\emph{Nearly all DNNs are definable in some o-minimal structure.}
\end{quote}
By \cref{definable_composability}, it suffices to consider whether the building blocks of a DNN (i.e., the activation and loss functions) are definable in some o-minimal structure; we consider the case of activation functions.
In the literature, there are various ``long lists'' of activation functions~\cite{dubey2022activation,nwankpa2018activation}.
The most exhaustive of these we could find was a recent survey~\cite{kunc2024decadesactivationscomprehensivesurvey} titled \emph{Three decades of activations: a comprehensive survey of 400 activation functions for neural networks}. In that document, we roughly estimate:\footnote{We take a few liberties with this count. For example, consider the Noisy ReLU
$f(z) = \max(0,z+a)$
where $a$ is a stochastic parameter~\cite[3.6.6]{kunc2024decadesactivationscomprehensivesurvey} . We consider this to be definable in an o-minimal structure since it is semialgebraic in the two variables $z$ and $a$. As our definition of \emph{definable in an o-minimal structure} only pertains to subsets and functions in $\R^n$, objects such as \emph{random variables} (i.e., a measurable function $\Omega\to\R$ where $\Omega$ is some abstract sample space, not a subset of $\R^n$) ostensibly fall outside the purview of \emph{definable/not-definable}.
 Likewise, consider the locally adaptive activation function (LAAF) $f(z)=g(a\cdot z)$ where $g(z)$ is a given activation function and $a$ is a trainable parameter~\cite[4.16]{kunc2024decadesactivationscomprehensivesurvey}. We also count this towards being definable in an o-minimal structure, since the process $g(z)\leadsto g(a\cdot z)$ can never break o-minimality; of course, if the initial $g(z)$ is not tame, then neither will be $f(z)$.
}
\begin{enumerate}
\item Around $89\%$ of the functions which show up are definable in one of the main examples considered above. This is by direct inspection using similar reasoning as done for the activation functions listed in \cref{table:definable}; see \cref{rem:table_semialgebraic}, \cref{cor:table_exp}, and \cref{cor:table_pfaff}.
\item Another $5\%$ of the functions involve the approach from \cite{zamora2019adaptive} of considering fractional derivatives $D^{\alpha}f(z)$ of simpler activation functions $f(z)$, where $\alpha\in\R$ is possibly a~free/trainable parameter. The definability of such a situation is a bit unclear (to us at least), although we point out that the main example is the \emph{fractional $\operatorname{ReLU}$}
defined as:
\[
\operatorname{FracReLu}_{\alpha}(z) \ = \ \frac{z^{1-\alpha}}{\Gamma(2-\alpha)}
\]
If $\alpha\in\R$ is fixed, then this is definable in $\R_{\exp}$ (even in $\R_{\operatorname{alg}}^{\R}$, the expansion of $\R_{\operatorname{alg}}$ by each fixed power function $x^{\alpha}:(0,+\infty)\to\R$ ($\alpha\in\R$), which is a polynomially bounded o-minimal structure~\cite{miller1994expansions}; see \cref{polynomially_bounded_Ran}).
Otherwise, if we want uniformity in $\alpha$, we need to work in some o-minimal structure which contains appropriate fragments of both $\exp$ and $\Gamma$. In practice, the parameter $\alpha$ is restricted so that $1<2-\alpha\leq 2$; this uses the restriction $\Gamma|_{(1,2]}$ which is definable in $\R_{\operatorname{an}}$, and thus $\R_{\operatorname{an},\exp}$ would suffice for $\operatorname{FracReLu}_{\alpha}$ under this restriction. If instead we only required $0<2-\alpha$, then $\R_{\operatorname{an},\exp}$ would no longer suffice~\cite[5.11]{DMM97LEPS}. However, van den Dries and Speissegger construct in~\cite{van2000field} a (polynomially bounded) o-minimal structure $\R_{\mathcal{G}}$ consisting of so-called \emph{Gevrey functions} such as $\phi(x):=\log\Gamma(x)-(x-\frac{1}{2})\log x+x-\frac{1}{2}\log(2\pi):(1,+\infty)\to\R$ (a function with the asymptotics of the Stirling series). Moreover, they show that adjoining $\exp:\R\to\R$ produces a structure $\R_{\mathcal{G},\exp}$ which is also o-minimal and contains $\Gamma:(0,+\infty)\to\R$; see~\cite{padgett2025definabilitycomplexfunctionsominimal} for more on the $\Gamma$ function and o-minimality.
\item The remaining $6\%$ are not definable in an o-minimal structure because they appear to use the unrestricted sine and cosine functions and thus are instances of \cref{nex:oscillations} above. Examples include:
\[
\operatorname{Cosid}(z) \ = \ \cos(z)-z\quad\text{and}\quad \operatorname{Expcos}_{a,b}(z) \ = \ \exp(-az^2)\cdot \cos(bz).
\]
A standard recourse in this situation is to reason that the functions $\sin$ and $\cos$ are actually being evaluated on some \emph{a priori fixed} standard domain $I$ such as $[0,2\pi]$, and then appeal to the fact that the restricted functions $\sin|_{I}$ and $\cos|_{I}$ are definable in $\R_{\operatorname{an}}$ whenever $I$ is a fixed compact interval. Of course we do not claim that this must always be the case.
\end{enumerate}
\noindent
Consequently, $\R_{\operatorname{Pfaff}}$ is sufficient
for nearly all functions occurring in deep-learning; at least those built from functions listed in \cref{table:definable} and discussed in (1) above. This observation has been made elsewhere, \eg{}~\cite{kranz2025sadneuralnetworksdivergent,kratsios2026feedforwardneuralnetworkdefinable}. If we wish to also consider $\operatorname{FracReLu}_{\alpha}$ as in (2) above, then one of the extensions $\operatorname{Pfaff}(\R_{\mathcal{G}})\supseteq\operatorname{Pfaff}(\R_{\operatorname{an}})\supseteq\R_{\operatorname{Pfaff}}$ would suffice, depending on the restriction of $\alpha$. The activation function $\operatorname{FracReLu}_{\alpha}$ is of theoretical interest although not widely used in practice (to our knowledge).
On the other hand, GELU is heavily used in practice:
 GELU was first defined in \cite{hendrycks2016gaussian}, first used in \cite{devlin2019bertpretrainingdeepbidirectional},
and ultimately adopted in the GPT models; see Table 1 in \cite{fang2023transformerslearnableactivationfunctions}.
\end{remark}

\noindent
We summarize the inclusion relations between the o-minimal structures discussed in this section in~\cref{fig:ominrelation}.
The following remark provides particular functions which separate various structures.

\begin{remark}[Separation between o-minimal structures]\label{rem:separations}
    The following examples illustrate some strict separations between the o-minimal structures
    in \cref{fig:ominrelation}:
    \begin{itemize}
        \item the function $\arctan$ is definable in $\R_{\arctan}:=(\R_{\operatorname{alg}},\arctan)\subseteq\R_{\operatorname{Pfaff}}$ but not in $\R_{\exp}$~\cite{bianconi1997nondefinability}; see also~\cite{MillerFields12}.
        \item the function $\exp|_{[0,1]}$ is definable in $\R_{\operatorname{an}}$ but not in $\R_{\arctan}$; see~\cite{bianconi1997nondefinability}.
        \item the function $\sin|_{[0,2\pi]}$ is definable in $\R_{\operatorname{an}}$ but not in $\R_{\exp}$; see~\cite{bianconi1997nondefinability}.
        \item the function $x^{\sqrt{2}}:(0,+\infty)\to\R$ is definable in $\R_{\operatorname{alg}}^{\R}$ but not in $\R_{\mathcal{G}}$; see~\cite{miller1994expansions,van2000field}.
        \item the function $\Gamma:(0,+\infty)\to\R$ is definable in $\R_{\mathcal{G},\exp}$ but not in $\R_{\operatorname{an},\exp}$ and the function $\phi(x):=\log\Gamma(x)-(x-\frac{1}{2})\log x+x-\frac{1}{2}\log(2\pi):(1,+\infty)\to\R$ is definable in $\R_{\mathcal{G}}$ but not in $\R_{\operatorname{an}}$; see~\cite{van2000field,DMM97LEPS}.
        Moreover, the function $\Gamma:(0,+\infty)\to\R$ is not definable in $\R_{\operatorname{Pfaff}}$ by, e.g.,~\cite[13.4]{aschenbrenner2024theorymaximalhardyfields} and~\cite{Holder1886Gamma}.
        \item the function $\operatorname{erf}(x)$ is definable in $\R_{\operatorname{Pfaff}}$ but not in $\R_{\operatorname{an},\exp}$~\cite{DMM97LEPS}.
        \item we expect the remaining expansions are also strict, although we do not have examples.
    \end{itemize}
\end{remark}

\subsection{Tame asymptotics}\label{sec:tame-asymptotics}
\emph{\jareplace{For the rest of \Cref{sec:defomin}}{In this subsection}, we fix an o-minimal structure $\mathcal{R}$, so ``definable'' means ``definable in $\mathcal{R}$''.}

\medskip\noindent
In this subsection, we shall see that an o-minimality framework gives rise to a world of ``tame asymptotics'' and thus is a setting which is conducive to ``convergence guarantees'' of various kinds. The following easy statement expresses a ``logical asymptotics'', i.e., any definable ``property'' $D$ is either eventually true or eventually false:

\begin{localOminimality*}\label{th:local-o-minimality}
Suppose $D\subseteq\mathbb{R}$ is definable \jaedit{in an o-minimal structure} and fix $a\in\mathbb{R}$. Then there exists $\varepsilon>0$ such that either:
\begin{enumerate}
\item $(a,a+\varepsilon)\subseteq D$, or
\item $(a,a+\varepsilon)\cap D=\varnothing$.
\end{enumerate}
A similar statement holds to the left of $a$, as well as at $\pm\infty$.
\end{localOminimality*}
\noindent
The power and appeal of \emph{local o-minimality} is that we can use our imagination when choosing the definable set $D$ to obtain useful consequences. For example:

\begin{lemma}[Existence of one-sided limits]\label{existence_onesided_lim_omin}
Suppose $f:(a,b)\to\mathbb{R}$ is definable \jaedit{in the o-minimal structure $\mathcal{R}$}. Then for each $c\in [a,b)$ the limit $\lim_{t\downarrow c}f(t)$ exists in $\mathbb{R}_{\pm\infty}$. Moreover, the following is also definable \jaedit{in $\mathcal{R}$}:
\[
[a,b)\to\textstyle\mathbb{R}_{\pm\infty},\quad c \ \mapsto \ \lim_{t\downarrow c}f(t).
\]
\end{lemma}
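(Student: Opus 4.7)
The strategy is to extract both the existence of $\lim_{t\downarrow c}f(t)$ and the definability of the resulting map from a single source, namely \emph{local o-minimality} (\cref{th:local-o-minimality}) applied to the family of level sets $D_r:=\{t\in(a,b):f(t)>r\}$ as $r$ varies in $\mathbb{R}$.

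\textbf{Existence.} Fix $c\in[a,b)$. Each $D_r$ is definable (as a fiber of the definable set $\{(t,r):f(t)>r\}$), so local o-minimality at $c$ yields the dichotomy: either $(c,c+\varepsilon)\subseteq D_r$ for some $\varepsilon>0$ (``$f>r$ eventually''), or $(c,c+\varepsilon)\cap D_r=\varnothing$ for some $\varepsilon>0$ (``$f\le r$ eventually''). Set
\[
A \ := \ \{r\in\mathbb{R} : f(t)>r \text{ eventually as } t\downarrow c\}, \qquad B \ := \ \{r\in\mathbb{R} : f(t)\le r \text{ eventually as } t\downarrow c\},
\]
so that $A\cup B=\mathbb{R}$; moreover $A$ is downward closed and $B$ is upward closed. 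Hence they partition $\mathbb{R}$ into an initial and final segment determined by a cut $L(c)\in\mathbb{R}_{\pm\infty}$, and for any $r_1<L(c)<r_2$ there exists $\varepsilon>0$ with $r_1<f(t)\le r_2$ on $(c,c+\varepsilon)$. This is precisely the statement $\lim_{t\downarrow c}f(t)=L(c)$; the boundary cases $L(c)=\pm\infty$ correspond to $B=\varnothing$ or $A=\varnothing$ and are handled identically.

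\textbf{Definability of the limit map.} The graph of $c\mapsto L(c)$, split into its three natural pieces, is cut out by first-order conditions in the definable function $f$. Explicitly, the finite part
\[
\big\{(c,y)\in[a,b)\times\mathbb{R} : \forall \eta>0\ \exists\delta>0\ \forall t\in(c,c+\delta),\ |f(t)-y|<\eta\big\}
\]
is definable, and by the existence step it coincides with the graph of $L$ over $\{c:L(c)\in\mathbb{R}\}$. The sets $\{c:L(c)=+\infty\}$ and $\{c:L(c)=-\infty\}$ are likewise definable via the formulas $\forall M\ \exists\delta>0\ \forall t\in(c,c+\delta),\ f(t)>M$ and its analogue with $f(t)<-M$. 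Stitching these three definable pieces together exhibits $c\mapsto L(c)$ as a definable function into $\mathbb{R}_{\pm\infty}$.

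\textbf{Main obstacle.} The only real content is showing $\liminf_{t\downarrow c}f(t)=\limsup_{t\downarrow c}f(t)$, and its entire substance is the level-set dichotomy furnished by local o-minimality; no monotonicity theorem, cell decomposition, or induction on dimension is required for this one-dimensional statement. The argument ultimately reduces to observing that definability of every $D_r$, combined with axiom (O2), forbids any oscillation of $f$ near $c$.
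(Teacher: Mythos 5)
Your proof is correct and rests on the same key ingredient as the paper's own sketch: local o-minimality applied to a definable level set of $f$ near $c$ (the paper phrases this as a contradiction with a level $\ell$ strictly between $\liminf_{t\downarrow c}f(t)$ and $\limsup_{t\downarrow c}f(t)$, while you organize the identical dichotomy into a Dedekind cut of levels $r$). You additionally spell out the definability of the limit map via first-order formulas, a part of the statement the paper's sketch omits entirely, and that argument is also correct.
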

\begin{proof}[Proof sketch]
Suppose towards a contradiction there exists $\ell\in\R$ such that $\liminf_{t\downarrow c}f(t)<\ell<\limsup_{t\downarrow c}f(t)$. Then the definable set $D:=\{t\in(c,b):f(t)\geq \ell\}$ contradicts
\cref{th:local-o-minimality}.
\end{proof}
\noindent
As an application, we have the following result of Gra\~na Drummond and Peterzil~\cite{DrummondPeterzil02} which Ioffe~\cite{Ioffe08} refers to as ``the first study in which o-minimality was directly applied to optimization''.

\begin{example}[Convergence of central path]
Gra\~na Drummond and Peterzil observed in~\cite{DrummondPeterzil02} that under certain natural assumptions, if a semi-definite programming problem is definable in $\R_{\operatorname{an},\exp}$, then the central path trajectory always converges. Here we briefly recount a more general version as formulated in~\cite{Ioffe08}.
Assume that $U\subseteq\R^n$ is compact and $f:\R^n\to\R$ and $g:\R^n\times U\to\R$ are such that $f$ and each $g(\cdot,u)$ are convex continuous.
The optimization problem reads:
\[
\min_{x\in X}f(x) \quad\text{such that $g(x,u)\leq 0$, for every $u\in U$}.
\]
A common strategy is to introduce a so-called \emph{barrier function} $\beta:\R^n\to\R\cup\{+\infty\}$, where $\beta$ is finite for strictly feasible $x$ and equal to $+\infty$ otherwise, and to study the auxiliary problem with a parameter $\mu>0$:
\[
\min_{x\in \R^n}f(x)+\mu\beta(x).
\]
Let $x(\mu)$ denote a solution to the auxiliary problem (called the \emph{central path}); this might not exist or be uniquely defined.
Under this setup, the main result follows easily from \cref{existence_onesided_lim_omin}:
\begin{itemize}
\item Suppose $f,g,\beta$ are definable in an o-minimal structure. If $x(\mu)$ is well-defined and bounded for sufficiently small $\mu>0$, then the central path $x(\mu)$ converges as $\mu\downarrow 0$.
\end{itemize}
A similar result was also proved in~\cite{Halicka02} in the semialgebraic setting using the so-called \emph{curve selection lemma}, a version of \cref{definableChoice}. See~\cite{DrummondPeterzil02,Halicka02} for a further discussion of the history of this problem.
\end{example}

\noindent
We also have a set-valued version of \cref{existence_onesided_lim_omin}:

\begin{lemma}[Existence of one-sided Painlev\'{e}-Kuratowski limits]
Suppose $X\subseteq\R^{n+1}$ is definable \jaedit{in the o-minimal structure $\mathcal{R}$}, viewed as a definable family $(X_t)_{t\in\R}$ of definable sets $X_t\subseteq\R^n$ ($t\in\R$). Then the one-sided Painlev\'{e}ve-Kuratowski limit (cf.~\cite[4]{rockafellar2009variational}) of this family exists and is definable \jaedit{in $\mathcal{R}$}:
\begin{equation}\label{eq:liminfsup}
\textstyle \liminf_{t\downarrow 0}X_t \ = \ \limsup_{t\downarrow0}X_t.
\end{equation}
\end{lemma}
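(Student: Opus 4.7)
The plan is to reduce the set-valued claim to the scalar statement of \cref{existence_onesided_lim_omin}, applied to the distance-to-fiber function.

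First, I would recall the standard distance-function characterizations of the Painlev\'e--Kuratowski limits:
\[
\limsup_{t\downarrow 0}X_t=\{x\in\R^n:\liminf_{t\downarrow 0}\dist(x,X_t)=0\},\quad \liminf_{t\downarrow 0}X_t=\{x\in\R^n:\limsup_{t\downarrow 0}\dist(x,X_t)=0\},
\]
with the convention $\dist(x,\varnothing)=+\infty$. Since $X\subseteq\R^{n+1}$ is definable, so is the function $\Phi:\R^n\times\R\to\R\cup\{+\infty\}$ given by $\Phi(x,t):=\dist(x,X_t)$.

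Next, I would dispatch the case of possibly empty fibers. Applying \cref{th:local-o-minimality} at $0$ to the definable set $E:=\{t>0:X_t\neq\varnothing\}$, one of two cases holds: either $(0,\varepsilon)\cap E=\varnothing$ for some $\varepsilon>0$, in which case both PK limits are empty and the claim is trivial; or $(0,\varepsilon)\subseteq E$ for some $\varepsilon>0$, so $\Phi(x,\cdot)$ takes finite values on $(0,\varepsilon)$ for every $x$.

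In the nontrivial case, fix $x\in\R^n$: the section $t\mapsto\Phi(x,t)$ is a definable function $(0,\varepsilon)\to\R$, and by \cref{existence_onesided_lim_omin} the one-sided limit $\lim_{t\downarrow 0}\Phi(x,t)$ exists in $\R\cup\{\pm\infty\}$. In particular its $\liminf$ and $\limsup$ at $0$ coincide, so the two distance characterizations above describe the same subset of $\R^n$, proving the equality. For the definability of this common set, I would write it directly in first-order form as
\[
\{x\in\R^n:\forall\varepsilon>0,\,\exists\delta>0,\,\forall t\in(0,\delta),\,\exists x'\in X_t,\,\|x-x'\|<\varepsilon\},
\]
which is manifestly definable by the axioms (S1)--(S4).

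The only real subtlety is the book-keeping around the empty-fiber case; once it is dispatched by local o-minimality, the remainder of the argument is a one-line translation of the pointwise uniqueness of one-sided limits for definable functions, which is essentially the scalar reflection of the o-minimality axiom (O2).
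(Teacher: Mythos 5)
Your proof is correct. The paper states this lemma without proof, presenting it merely as ``a set-valued version of \cref{existence_onesided_lim_omin}'', and your argument is precisely the reduction that framing suggests: pass to the definable distance function $\Phi(x,t)=\operatorname{dist}(x,X_t)$, use the standard characterization of the inner and outer Painlev\'e--Kuratowski limits as the zero sets of $\limsup_{t\downarrow 0}\Phi(x,\cdot)$ and $\liminf_{t\downarrow 0}\Phi(x,\cdot)$ respectively, and conclude from the existence of the one-sided limit of the definable section $t\mapsto\Phi(x,t)$. Your handling of the empty-fiber case via \cref{th:local-o-minimality} and the explicit first-order formula for definability are both sound; the only cosmetic remark is that one could subsume the empty-fiber case by working throughout in $\R_{\pm\infty}$, where \cref{existence_onesided_lim_omin} already delivers the limit.
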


\noindent
As an application, two notions of \emph{tangent cone} (cf.~\cite[6A]{rockafellar2009variational}) always coincide in the tame setting:

\begin{lemma}[Tangent cone]
If $Q\subseteq\R^n$ is definable \jaedit{in an o-minimal structure}, then $Q$ is geometrically derivable, i.e., the tangent cone $T_{Q}(x)$ of $Q$ at a point $x\in Q$ equals the derivable tangent cone $T_Q^{\operatorname{der}}(x)$, i.e.,
\[
\textstyle \limsup_{t\downarrow 0}t^{-1}(Q-x) \ = \ \liminf_{t\downarrow 0}t^{-1}(Q-x).
\]
\end{lemma}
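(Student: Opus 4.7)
The plan is to realize both sides of the claimed equality as the common one-sided Painlev\'{e}--Kuratowski limit furnished by the previous lemma, after setting up the appropriate definable family.

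Fix $x\in Q$ and, for each $t>0$, set $X_{t}:=t^{-1}(Q-x)\subseteq\R^{n}$; for $t\leq 0$ we set $X_{t}:=\varnothing$ (say). First I would verify that the total set
\[
X \ := \ \{(t,y)\in\R\times\R^{n} : t>0 \ \text{and} \ x+ty\in Q\}
\]
is definable: the condition ``$t>0$'' uses axiom (O1), the map $(t,y)\mapsto x+ty$ is definable thanks to axiom (S6) (and (S5) to have $x$ as a parameter), and membership in the definable set $Q$ is preserved under preimage by a definable map via the composability lemma \cref{definable_composability}. Thus $X\in\mathcal{R}_{n+1}$ and its fibers in the first coordinate recover the family $(X_{t})_{t\in\R}$.

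Next I would apply the preceding lemma on existence of one-sided Painlev\'{e}--Kuratowski limits to the definable family $(X_{t})_{t\in\R}$ at $t=0^{+}$. This yields
\[
\liminf_{t\downarrow 0}X_{t} \ = \ \limsup_{t\downarrow 0}X_{t},
\]
which is exactly the equality
$\liminf_{t\downarrow 0}t^{-1}(Q-x) = \limsup_{t\downarrow 0}t^{-1}(Q-x)$, and this is the definition of $Q$ being geometrically derivable at $x$, with both cones coinciding with $T_{Q}(x)=T_{Q}^{\operatorname{der}}(x)$ in the sense of~\cite[6A]{rockafellar2009variational}.

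The only real step to check is the definability of $X$, which reduces to a single first-order combination of definable ingredients; the rest is a direct citation of the previous lemma together with the usual translation between the Painlev\'{e}--Kuratowski limits of the dilated family $t^{-1}(Q-x)$ and the tangent/derivable tangent cones. There is no genuine obstacle: the whole content is that o-minimality forces the upper and lower limits of a definable one-parameter family of sets to agree on one side of any point, and the tangent cone is just this construction applied to the dilation family.
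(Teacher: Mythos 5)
Your proposal is correct and is exactly the argument the paper intends: the lemma is stated as a direct application of the preceding result on one-sided Painlev\'{e}--Kuratowski limits, applied to the definable family $X_{t}=t^{-1}(Q-x)$, whose definability follows from the axioms just as you describe. Nothing further is needed.
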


\noindent
Higher-order versions of this fact are true as well, for instance, every definable set is \emph{parabolically derivable} in the sense of~\cite[Definition 13.11]{rockafellar2009variational}. For more on coincidence of tangent cones in the tame setting see~\cite{KurdykaLeGalNguyen18}.

\medskip\noindent
Thus in general we should be on the lookout
for any properties of the form \eqref{eq:liminfsup},
as these are guaranteed to hold in the tame setting.

\medskip\noindent
We now turn our attention to the nature of 1-variable definable functions.

\begin{monotonicityTheorem*}\label{monotonicity_theorem}\cite[3.1.2]{driesTameTopologyOminimal1998}
If $f:(a,b)\to\R$ is definable \jaedit{in an o-minimal structure}, then there exist points $a=a_0<a_1<\cdots<a_k<a_{k+1}=b$ such that each restriction $f|_{(a_i,a_{i+1})}$ is continuous, and either constant or strictly monotone.
\end{monotonicityTheorem*}

\noindent
In fact, we can achieve an arbitrarily high finite degree of smoothness:

\begin{smoothMonotonicityTheorem*}\label{smoothMonotonicityTheorem*}
Given $r\geq 1$, if $f:(a,b)\to\R$ is definable \jaedit{in an o-minimal structure}, then there exist points $a=a_0<a_1<\cdots<a_k<a_{k+1}=b$ in $(a,b)$ such that each restriction $f|_{(a_i,a_{i+1})}$ is $C^r$-smooth, and either constant or strictly monotone.
\end{smoothMonotonicityTheorem*}
\noindent In order to quickly prove the \cref{smoothMonotonicityTheorem*}, we recall the classical Lebesgue's Theorem (1904) from real analysis:

\begin{quote}
\emph{If $f:(a,b)\to\R$ is monotone, then $f$ is differentiable a.e. on $(a,b)$.}
\end{quote}
\begin{proof}
Let $r=1$. By the \cref{monotonicity_theorem}, we can assume $f$ is monotone.
Since the points of (non)differentiability are definable (exercise!), and measure zero by Lebesgue's Theorem, we have from the \cref{th:smallsets} that there are only finitely many points of non-differentiability. The case $r\geq 2$ follows by induction.
\end{proof}

\noindent
Here is a more subtle property. Recall from freshman calculus that \emph{l'Hopital's rule} gives a~one-direction implication, under some \emph{indeterminate form} assumption. The reverse implication does not hold in general in the ``naive'' direction, as the following example shows:
\begin{align*}
\lim_{t\downarrow 0}\frac{t^2\sin t^{-1}}{t} \ = \ 0, \quad\text{however,} 
\quad \lim_{t\downarrow 0}\frac{(t^2\sin t^{-1})'}{t'} \ = \ \lim_{t\downarrow0}(2t\sin t^{-1}-\cos t^{-1})\quad \text{does not exist.}
\end{align*}
In the tame setting we have:

\begin{lemma}[Reverse l'Hopital's rule]\label{reverse_lhopitals_rule}
Suppose $\phi,\psi:(0,\varepsilon)\to\R$ are definable \jaedit{in an o-minimal structure} with $\lim_{t\downarrow 0}\phi(t)=\lim_{t\downarrow 0}\psi(t)=0$, and assume $\psi'(t)>0$ as $t\downarrow 0$. Then for $\ell\in\R$:
\[
\lim_{t\downarrow 0}\frac{\phi(t)}{\psi(t)} \ = \ \ell \quad \Rightarrow \quad \lim_{t\downarrow0}\frac{\phi'(t)}{\psi'(t)} \ = \ \ell.
\]
\end{lemma}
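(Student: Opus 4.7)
The plan is to exploit the fact that, in the tame setting, the existence of the one-sided limit $\lim_{t\downarrow 0} \phi'(t)/\psi'(t)$ is automatic. Once that limit is known to exist in $\R_{\pm\infty}$, the conclusion follows from the \emph{ordinary} (forward) l'Hôpital's rule: the indeterminate form $0/0$ gives $\lim \phi/\psi = \lim \phi'/\psi'$ whenever the latter exists. Since we are assuming $\lim \phi/\psi = \ell \in \R$, this forces $\lim \phi'/\psi' = \ell$.

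Here is the sequence of steps I would follow. First, since $\phi$ and $\psi$ are definable, apply the \cref{smoothMonotonicityTheorem*} with $r=1$ to both functions to find $\delta \in (0,\varepsilon)$ such that $\phi$ and $\psi$ are $C^1$ on $(0,\delta)$. Second, shrink $\delta$ further so that $\psi'(t) > 0$ for all $t \in (0,\delta)$; in particular $\psi$ is strictly increasing there, and combined with $\lim_{t\downarrow 0}\psi(t) = 0$ this yields $\psi(t) > 0$ on $(0,\delta)$. The quotient $\phi'/\psi'$ is therefore a well-defined, definable function on $(0,\delta)$.

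Third, invoke \cref{existence_onesided_lim_omin}: the definable function $t \mapsto \phi'(t)/\psi'(t)$ on $(0,\delta)$ has a one-sided limit at $0$, which we denote $\ell' \in \R \cup \{\pm\infty\}$. This is the crucial, genuinely o-minimal input to the argument, and it is precisely what fails in the classical non-tame counterexample $\phi(t) = t^2 \sin t^{-1}$, $\psi(t)=t$ cited just above the statement. Fourth, apply the classical forward l'Hôpital's rule: since $\phi(t), \psi(t) \to 0$, $\psi'$ is of constant sign near $0$, and $\lim_{t\downarrow 0}\phi'(t)/\psi'(t) = \ell'$ exists in $\R_{\pm\infty}$, we conclude $\lim_{t\downarrow 0} \phi(t)/\psi(t) = \ell'$. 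Comparing with the hypothesis $\lim_{t\downarrow 0}\phi(t)/\psi(t) = \ell \in \R$ gives $\ell' = \ell$ (and in particular rules out $\ell' = \pm\infty$).

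There is no serious obstacle, and the proof is essentially a clean application of two tame tools (smooth monotonicity and existence of one-sided limits) plus a classical calculus result. The only point requiring mild care is handling the case $\ell' = \pm\infty$ in the classical l'Hôpital invocation, but this is immediately excluded by the assumption that $\ell$ is a finite real number. The conceptual content of the lemma is entirely in step three: definability upgrades the \emph{if the limit exists} hypothesis of classical l'Hôpital into an unconditional statement.
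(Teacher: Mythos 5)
Your proof is correct. The paper itself does not prove \cref{reverse_lhopitals_rule} --- it only gives a historical remark and defers to \cite[Lemma 1]{bolte2009tame} --- but your argument is precisely the standard one from that reference: use the \cref{smoothMonotonicityTheorem*} (or the \cref{monotonicity_theorem}) to make $\phi'/\psi'$ a well-defined definable function near $0$, invoke \cref{existence_onesided_lim_omin} to get an unconditional limit $\ell'\in\R_{\pm\infty}$, and then let the classical forward l'H\^opital's rule force $\ell'=\ell$. The one point you gloss over --- that $\phi'$ and $\psi'$ are themselves definable --- is standard (the graph of the derivative is given by a first-order formula) and is treated as an exercise elsewhere in the paper, so there is no gap.
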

\begin{remark}[(Some) history of \cref{reverse_lhopitals_rule}]\label{rem:history}
The version here is from \emph{Tame functions are semismooth} (2009)~\cite[Lemma 1]{bolte2009tame} by Bolte, Daniilidis, and Lewis, and was the crucial observation needed to prove their main theorem that all definable locally Lipschitz functions are semismooth~\cite[Theorem 1]{bolte2009tame}; see also \cref{poly_bdd_in_optimization_remark} below.%

The history of \cref{reverse_lhopitals_rule} and its various disguised forms dates back already to the connection between o-minimality and so-called \emph{Hardy fields}, e.g.,~\cite{Miller94,van1994elementary} (1994), see also~\cite[\S3]{MillerFields12}. The Hardy field $H(\mathcal{R})$ of $\mathcal{R}$ is the ordered differential field of germs at $+\infty$ of $1$-variable definable functions $\R\to\R$. Hardy fields are important invariants of o-minimal structures as they capture all possible asymptotic behavior of 1-variable definable functions.

More recently, Aschenbrenner, van den Dries and van der Hoeven have introduced an abstract setting of so-called \emph{$H$-fields} designed to capture the first-order properties of Hardy fields.
Their work \emph{Asymptotic differential algebra and model theory of transseries} (2017)~\cite{ADAMTT} culminates in an analogue of the \cref{th:tarskiSeidenberg} in a certain natural framework for asymptotic algebraic differential equations over $H$-fields (in the same way semialgebraic geometry is a natural framework for polynomial equations over $\R$).

In the theory of $H$-fields, \cref{reverse_lhopitals_rule} shows up as e.g.,~\cite[10.1.4]{ADAMTT}. This usage in the $H$-field setting (which includes the Hardy field setting, and thus the o-minimal setting) can be traced back to at least as early as 1979 in
\emph{On the value group of a differential valuation}~\cite{rosenlicht1979value} by Rosenlicht.
\end{remark}

\noindent
As already alluded to, in theory one can characterize the asymptotics which might occur in an o-minimal structure. The simplest version of this is:

\begin{definition}\label{polynomially_bounded_Ran}\cite{MillerFields12}
We say that \jaedit{the o-minimal structure} $\mathcal{R}$ is \textbf{polynomially bounded} if for every definable $f:\R\to\R$ there exists $N\in\N$ such that eventually $|f(t)|\leq t^N$ as $t\to+\infty$. As shown in~\cite[Theorem 1.2]{MillerFields12}, this is equivalent to the seemingly stronger property: for every eventually nonzero definable $f:\R\to\R$, there exists an exponent $\gamma\in\R$ and $C\in\R$ such that:
\[
f(t) \ \sim \ Ct^{\gamma} \quad \text{as $t\to+\infty$}.
\]
We define the \textbf{field of exponents} of $\mathcal{R}$ to be the set of all $\gamma\in\R$ such that the function $t^{\gamma}:(0,+\infty)\to\R$ is definable in $\mathcal{R}$; it can be checked that this is in fact a subfield of $\R$.
\end{definition}

\noindent
The structures $\R_{\operatorname{alg}}$ and $\R_{\operatorname{an}}$ are polynomially bounded with field of exponents $\Q$ (see~\cite{vandenDries86Ran} for $\R_{\operatorname{an}}$). The structure $\R_{\operatorname{alg}}^{\R}$ mentioned in (2) of \cref{400_AFs_remark} is polynomially bounded with field of exponents $\R$. Any structure containing the exponential function, for instance $\R_{\exp}$, $\R_{\operatorname{an},\exp}$, and $\R_{\operatorname{Pfaff}}$, will not be polynomially bounded and automatically have field of exponents $\R$. We summarize this in~\cref{fig:ominrelation}. To characterize the asymptotics of o-minimal structures which are not polynomially bounded, one can use \emph{Hardy fields} and \emph{transseries} as demonstrated in~\cite{DMM97LEPS}.

\begin{remark}\label{poly_bdd_in_optimization_remark}
The property \emph{polynomially bounded} has useful practical consequences in optimization. We mention two of them here:
\begin{enumerate}
\item In \emph{Tame functions are semismooth}~\cite[Theorem 2]{bolte2009tame} the authors essentially show that in a polynomially bounded setting, the nonsmooth Newton's method converges superlinearly for definable locally Lipschitz functions with an error at the $k$th step of $O(2^{-(1+\gamma)^k})$ for some positive parameter $\gamma>0$ coming from the field of exponents.
\item In \Cref{sec:descentmethods} below, we consider \emph{qualitative} properties of the convergence of the subgradient method; we do not address the \emph{rate} of convergence.
To analyze the rate, one often uses the so-called \emph{Polyak-{\L}ojasiewicz-Kurdyka (PLK) inequalities} (also known as \emph{KL inequalities}); in this case, a polynomially bounded setting similarly can yield an exponent which can imply the superlinear convergence of a descent algorithm.
See~\cite{bentoConvergenceDescentOptimization2025} for an example of how this is done.
\end{enumerate}
\end{remark}

\noindent
Remarkably, the exponential function is unavoidable in an o-minimal structure which is not polynomially bounded:

\begin{exponentialDichotomy*}\cite{Miller94}
\jaedit{Let $\mathcal{R}$ be an arbitrary o-minimal structure}. Exactly one of the following is true:
\begin{enumerate}
\item $\mathcal{R}$ is polynomially bounded, or
\item the exponential function $\exp:\R\to\R$ is definable in $\mathcal{R}$.
\end{enumerate}
\end{exponentialDichotomy*}

\noindent
It is unknown if one can achieve growth rates faster than every iterated exponential in an o-minimal structure, i.e., every known o-minimal structure is \emph{exponentially bounded}:

\begin{openQuestion}
Does there exist an o-minimal structure which defines a \emph{transexponential} function? A transexponential function is a function $E:\R\to\R$ such that for every $n\geq 1$, eventually $E(t)\geq \exp_n(t)$ as $t\to+\infty$, where $\exp_1:=\exp$ and $\exp_{n+1}:=\exp\circ \exp_{n}$.
\end{openQuestion}

\noindent
Progress has been made on the \emph{Hardy field} fragment of this question in Padgett's thesis~\cite{padgett2022sublogarithmic}. The existence of transexponential Hardy fields was shown by Boshernitzan in (1986)~\cite{boshernitzan1986hardy}. Recently, Aschenbrenner, van den Dries, and van der Hoeven proved the following remarkable result: all maximal Hardy fields (which are necessarily transexponential~\cite{boshernitzan1986hardy}) have the same first-order theory as differential fields, and moreover assuming the Continuum Hypothesis (CH) they are all isomorphic~\cite[Corollary B]{aschenbrenner2024filling}.

\subsection{Sets of higher dimension, in o-minimal structures}

In this section, we turn our attention to properties of sets of high dimension, definable in o-minimal structures, which commonly arise in deep learning.
We discuss in turn the notions of dimension and of stratification.

\subsubsection{Dimension of sets}

The first statement partially extends the \cref{th:smallsets} into higher dimension and states that a coherent value of ``dimension'' can be assigned to every definable set.
We use the fiber notation; see \Cref{sec:conventions}.

\begin{dimensionTheorem*}\label{omin_Dimension_Theorem}
Suppose $\mathcal{R}$ is an arbitrary o-minimal structure. There exists a~unique function:
\[
\dim \ : \ \{\text{definable sets}\}\to\N\cup\{-\infty\}
\]
such that:
\begin{enumerate}[label=(D\arabic*)]
    \item\label{D1}
        \begin{enumerate}[label=(\alph*)]
            \item\label{D1a} $\dim(S)=-\infty \Leftrightarrow S=\varnothing$ for definable $S \subseteq \R^n$;
            \item\label{D1b} $\dim(\{a\})=0$ for all $a \in \R$;
            \item\label{D1c} $\dim(\R)=1$;
        \end{enumerate}
    \item\label{D2} $\dim(S_1\cup S_2)=\max\{\dim(S_1), \dim(S_2)\}$ for definable $S_1,S_2 \subseteq \R^n$;
    \item\label{D3} $\dim$ is preserved under permutation of coordinates;
    \item\label{D4} if $S \subseteq \R^{n+1}$ is definable and we set $B_i\coloneqq \{ a\in \R^n : \dim(S_a)=i \}$ for $i=0,1$, then $B_i$ is definable and
    \[
    \dim\big(\{(a,b)\in S : a\in B_i \}\big)\ =\ i + \dim(B_i), \quad\text{for $i=0,1$.}
    \]
\end{enumerate}
\end{dimensionTheorem*}

\begin{example}
If $\mathcal{R}=(\R;<,+,\cdot)$ is the structure of semialgebraic sets, then the unique dimension function $\dim$ on $\mathcal{R}$ as provided by the \cref{omin_Dimension_Theorem} coincides with the dimensions $\dim_K$ and $\dim_t$ from \cref{semialg_dim_def_Krull_man}.
\end{example}

\medskip\noindent
The following theorem from \cite[Chap. 4(1.8)]{driesTameTopologyOminimal1998} concerns the dimension of the frontier of nonempty definable sets and (as pointed out in~\cite[\S2.1]{rolin2012construction}) can be regarded as an abstract topological manifestation of the principle of ``no oscillation'' such as in \cref{nex:oscillations}.
The frontier of a~set $S$ is defined as
$\operatorname{fr} S = \operatorname{cl}S \setminus S$.

\begin{smallfrontiertheorem*}\label{th:smallfrontier}
Let $S\subseteq \mathbb{R}^m$ be a nonempty set \jaedit{definable in an o-minimal structure}. Then
\[
\dim \operatorname{fr} S \ < \ \dim S.
\]
In particular, $\dim\operatorname{cl}S=\dim S$.
\end{smallfrontiertheorem*}

\medskip\noindent
The following three examples illustrate how \cref{th:smallfrontier} can fail in the non-tame setting. The graphical illustrations
are shown in \cref{fig:sine,fig:cexsdim}.

\begin{figure}[h]
  \centering
  \begin{subfigure}{0.48\linewidth}
    \resizebox{\textwidth}{!}{
      \begin{tikzpicture}
  \begin{axis}[
      trig format plots=rad,
      width=8cm,
      height=6cm,
      axis equal,
      xmajorgrids,
      ymajorgrids,
      legend pos=north east,
      legend cell align=left,
      legend style={fill=white, fill opacity=0.85, text opacity=1, draw=black},
    ]
    \addplot[
      chartreuse,
      no marks,
      line width=2.5pt,
      samples=100,
      domain=0:2*pi,
    ] ({cos(x)}, {sin(x)});
    \addlegendentry{limit set}
    \addplot[
      tomato,
      no marks,
      samples=200,
      domain=1:50,
    ] ({(1+1/x)*sin(x)}, {(1+1/x)*cos(x)});
    \addlegendentry{spiral}
  \end{axis}
\end{tikzpicture}
    }
    \caption{A one-dimensional spiral, the accumulation points of which form the unit-circle (dim. 1); see \cref{nex:spirallimitcycle}.}
  \end{subfigure}
  \hfill
  \begin{subfigure}{0.48\linewidth}
    \resizebox{\textwidth}{!}{
      \begin{tikzpicture}
  \begin{axis}[
      trig format plots=rad,
      width=8cm,
      height=8cm,
      axis equal image,
      view={45}{10},
      xmin=-1.1, xmax=1.1,
      ymin=-1.1, ymax=1.1,
      zmin=-1.1, zmax=1.1,
      legend pos=north west,
      legend cell align=left,
      legend style={fill=white, fill opacity=0.85, text opacity=1, draw=black},
      colormap={chart}{color=(chartreuse) color=(chartreuse)},
    ]
    \addplot3[
      surf,
      shader=flat,
      opacity=0.25,
      draw=chartreuse,
      samples=24,
      samples y=24,
      domain=0:2*pi,
      y domain=0:pi,
      z buffer=sort,
    ] ({cos(x)*sin(y)}, {sin(x)*sin(y)}, {cos(y)});
    \addlegendentry{limit set}
    \addplot3[
      tomato,
      no marks,
      samples=600,
      samples y=0,
      domain=1:29*pi,
    ] (
      {(1+1/x)*sin(x)*cos(sqrt(2)*x)},
      {(1+1/x)*sin(x)*sin(sqrt(2)*x)},
      {(1+1/x)*cos(x)}
    );
    \addlegendentry{curve}
  \end{axis}
\end{tikzpicture}
    }
    \caption{A one-dimensional curve, the accumulation points of which form the unit-sphere (dim. 2); see \cref{nex:sethigherfrontier}.}
  \end{subfigure}
  \caption{Dimension theory: the limit of a tame curve has smaller dimension than the curve.
  The two plots show non-tame examples, where the dimension of the limit of the curve is equal \textbf{(a)} or higher \textbf{(b)} than the dimension of the curve.\label{fig:cexsdim}
    \vspace{-2ex}
  }%
\end{figure}
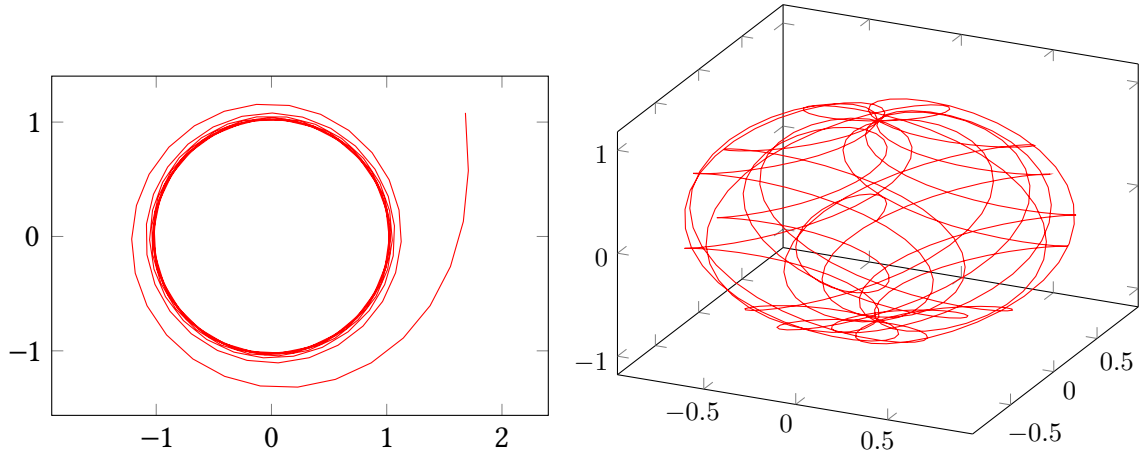

\begin{nonExample}[Topologist's sine curve]\label{nex:toposinecurve}
Consider the (non-tame) function $\gamma:(0,+\infty)\to\mathbb{R}$
such that $t \mapsto \sin t^{-1}$.
Then $S:=\gamma((0,+\infty))\subseteq\mathbb{R}^2$ is a $1$-dimensional $C^{1}$-manifold such that $\operatorname{fr} S=\{0\}\times [-1,1]$ also contains a nonempty $1$-dimensional $C^{1}$-manifold.
\end{nonExample}

\begin{nonExample}[Spiral with limit cycle]\label{nex:spirallimitcycle}
Consider the (non-tame) function: $\gamma:[1,+\infty)\to\mathbb{R}^2$
such that $t \mapsto (1+t^{-1})(\sin t,\cos t)$.
Then $S:=\gamma([1,+\infty))\subseteq\mathbb{R}^2$ is a $1$-dimensional $C^{1}$-manifold such that $\operatorname{fr} S$ is also a $1$-dimensional $C^{1}$-manifold. Indeed, $\operatorname{fr} S$ is the unit circle.
\end{nonExample}

\begin{nonExample}[A set whose frontier has higher dimension]\label{nex:sethigherfrontier}
Let $\alpha\in\mathbb{R}\setminus\mathbb{Q}$ be an irrational number. Consider the (non-tame) function
$\gamma:[1,+\infty)\to\mathbb{R}^3$ such that $t\mapsto (1+t^{-1})(\sin(t)\cos(\alpha t),\sin(t)\sin(\alpha t),\cos (t))$.
Then $S :=\gamma([1,+\infty))\subseteq\mathbb{R}^3$ is a $1$-dimensional $C^{1}$-manifold such that $\operatorname{fr} S$ is a $2$-dimensional $C^{1}$-manifold. Indeed, $\operatorname{fr} S$ is the unit $2$-sphere.
\end{nonExample}

\subsubsection{Stratification of sets and functions.}
Let us begin by providing some intuition.
The functions
defined by a DNN are often nonsmooth and nonconvex. However, often one may decompose the domain into finitely many regions such that the function is well-behaved on each region. See \Cref{fig:NN_intro} for an illustration of this on a two-dimensional DNN.

\begin{figure}[h]
  \begin{center} \includegraphics[width=0.3\textwidth]{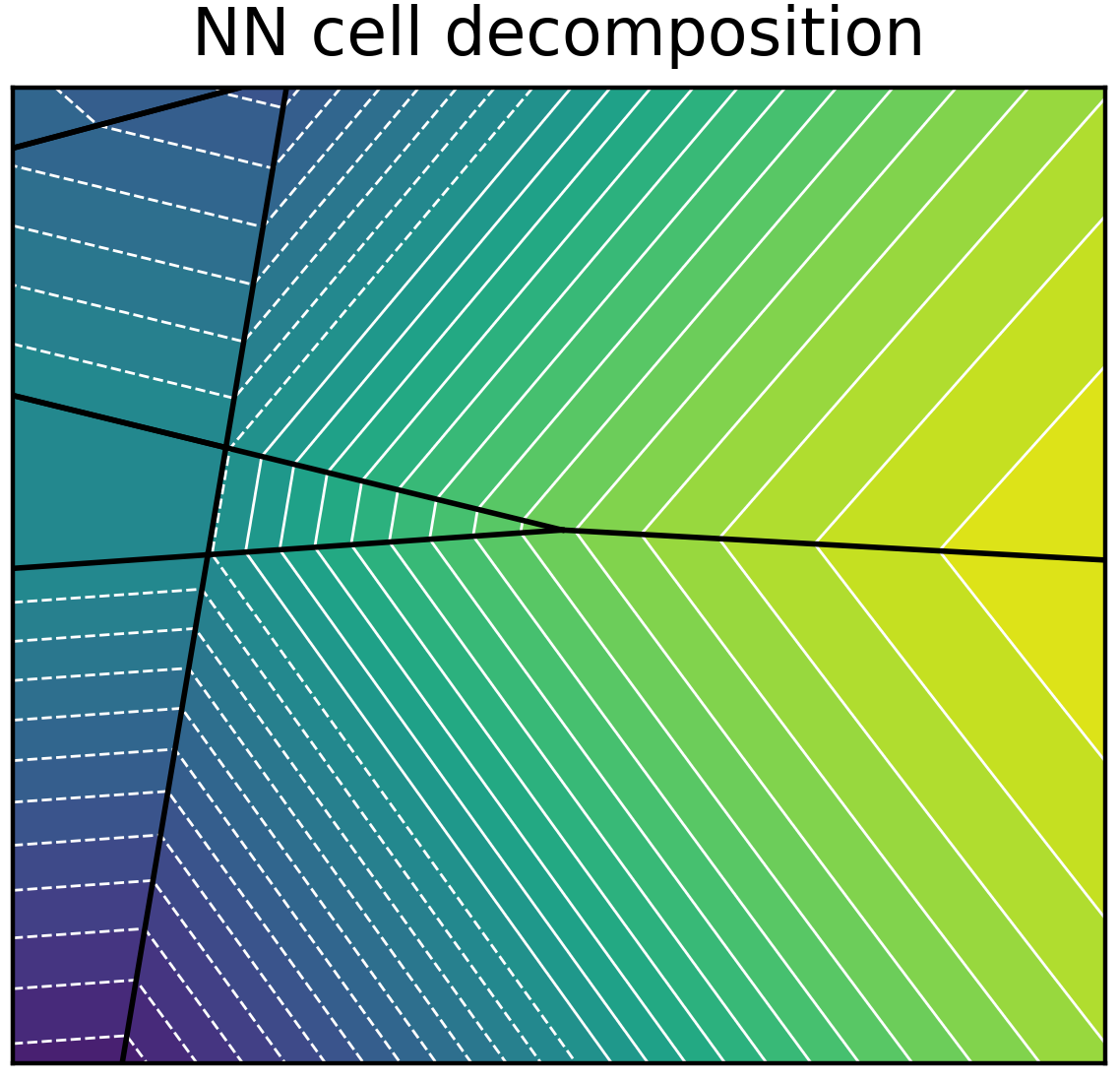}
    \caption{
      A generic 2-dimensional Deep Neural Network (MLP architecture), with level lines (white) and nonsmooth points (black) \cite{bareilles2023piecewise}.
      This is neither a convex function, nor a smooth function, although the ``cells'' indicate regions where the function is smooth.
      }
    \label{fig:NN_intro}
  \end{center}
\end{figure}

\medskip\noindent
This situation is typical for definable sets and functions.
Indeed, higher-dimensional definable sets exhibit very nice \emph{structural regularity} in the sense that they can be decomposed into finitely many definable pieces, and each piece is suitably nice; moreover, the pieces can be chosen so that adjacent pieces attach to each other in nice ways. There are several theorems of this form, for instance \emph{(Smooth) Cell Decomposition}~\cite[Chap. 3 (2.11), Chap. 7 (3.2)]{driesTameTopologyOminimal1998} and various \emph{Stratification Theorems} (e.g.~\cite{loi1996whitney,nguyen2014geometric,le1998verdier,bradley2025riso}). We mention here Loi's theorem on \emph{Verdier stratification}. First, we present some definitions, following the conventions from \Cref{sec:conventions}.

\medskip\noindent
\emph{For the rest of this section we fix an o-minimal structure $\mathcal{R}$ so ``definable'' means ``definable in $\mathcal{R}$''.}

\begin{definition}\label{def:stratifVerdier}
Given a definable set $S\subseteq\R^n$ and an integer $r\ge 1$, a~\textbf{(definable $C^r$) stratification} of $S$ is a~finite partition $\mathfrak{S}$ of $S$ into definable sets (called \textbf{strata}) such that:
\begin{itemize}
\item each stratum is a connected embedded $C^r$-manifold in $\R^n$, and
\item (frontier condition) for each $X\in\mathfrak{S}$, the relative frontier $\operatorname{cl}_S(X)\setminus X$ is a union of strata.
\end{itemize}
We say a stratification $\mathfrak{S}$ is a \textbf{Verdier} stratification if for every pair $(X,Y)$ of strata from $\mathfrak{S}$, we have that if $X\subseteq\operatorname{fr}(Y)$, then for every $\bar{x}\in X$
\begin{itemize}
\item[(w)] there exists $C>0$ and a neighbourhood $U$ of $\bar{x}$ in $\R^n$ such that:
\[
\Delta(T_X(x),T_Y(y)) \ \leq \ C\|x-y\|
\]
for every $x\in X\cap U$ and $y\in Y\cap U$.\footnote{The operator \(\Delta\) measures the distance between subspaces, see \Cref{sec:conventions}.} This is called the \textbf{Verdier condition}.
\end{itemize}
\end{definition}

\medskip\noindent
The following result provides the existence of a finite stratification for any set or function definable in some o-minimal structure.
This is a generalization of \cref{lemma:semialgsetstrat} in the semialgebraic setting, which will be instrumental for proving convergence of optimization schemes in \Cref{sec:descentmethods}.
\begin{theorem}[Loi]\label{verdier_stratification}\cite{le1998verdier}
Given $r\geq 1$, every definable set $S$ has a definable $\C^r$ Verdier stratification $\mathfrak{S}$.
Moreover, given definable sets $A_1,\ldots,A_k\subseteq S$, and a definable function $f:S\to\R$, such a stratification can be chosen so that:
\begin{enumerate}
\item $\mathfrak{S}$ partitions each $A_i$, i.e., for every stratum $M\in\mathfrak{S}$, either $M\cap A_i=\varnothing$ or $M\subseteq A_i$, and
\item for each stratum $M\in\mathfrak{S}$, the restriction $f|_{M}$ is $C^r$-smooth.
\end{enumerate}
\end{theorem}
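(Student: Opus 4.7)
The plan is to prove the theorem by induction on $\dim Z$, starting from the Smooth Cell Decomposition Theorem and refining the resulting partition until the Verdier condition~(w) holds. I would first apply Smooth Cell Decomposition to produce a finite partition $\mathcal{D}$ of $\R^n$ into $C^r$-cells compatible with $Z$, with each $A_i$, and with the graph of $f$, arranged so that $f$ restricted to any cell contained in $Z$ is $C^r$-smooth. Restricting $\mathcal{D}$ to $Z$ gives a partition $\mathfrak{S}_0$ into connected definable $C^r$-manifold strata, yielding properties (1)--(3). The frontier condition of \cref{def:stratifVerdier} is an automatic feature of cell decompositions, since the relative frontier of a cell is a union of cells of strictly smaller dimension. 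What remains is to arrange condition~(w).

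The central claim, due to Loi~\cite{le1998verdier}, is the following: for any pair of strata $(X,Y)$ with $X \subseteq \operatorname{fr}(Y)$, the ``bad set''
\[
W(X,Y) \ := \ \{\bar{x}\in X : \text{condition (w) fails at } \bar{x}\}
\]
is definable and satisfies $\dim W(X,Y) < \dim X$. Definability follows because tangent spaces of a definable $C^1$-manifold depend definably on the base point --- the required local parametrizations and their Jacobians are produced using \cref{definableChoice} --- and because the subspace distance $\Delta$ together with the existential quantifiers over the constant $C$ and the neighbourhood $U$ are all definable operations. The dimension bound is the geometric heart of the argument: if $W(X,Y)$ were to contain a relatively open subset of $X$, one could find a generic definable curve in $Y$ approaching a ``bad'' point $\bar{x}\in X$ along which the ratio $\Delta(T_X(\bar{x}),T_Y(y))/\|\bar{x}-y\|$ would blow up unboundedly; but the tame one-variable analysis of \cref{sec:tame-asymptotics} (the \cref{monotonicity_theorem} and \cref{existence_onesided_lim_omin}) forces this ratio to admit a definite asymptotic along any such curve, and a parametric argument combined with \cref{th:smallfrontier} rules out its divergence on a top-dimensional subset of $X$.

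Given the claim, the theorem follows by recursion on dimension. The union $B := \bigcup_{(X,Y)} W(X,Y)$ over strata pairs is a definable subset of $Z$ with $\dim B < \dim Z$. A further application of Smooth Cell Decomposition compatible with $B$ and with $\mathfrak{S}_0$ produces a refined partition whose top-dimensional strata satisfy~(w); any remaining Verdier failures are confined to a definable set of strictly smaller dimension, to which the inductive hypothesis applies, and the resulting stratifications glue together because of the compatibility enforced at each stage. The main obstacle is the dimension bound $\dim W(X,Y) < \dim X$: it is here that the full o-minimal toolkit --- tame asymptotics, definable control of tangent spaces along curves, and the Small Frontier Theorem --- must be orchestrated together, and it is this step that occupies the technical core of Loi's original paper.
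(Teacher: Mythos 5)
The paper does not prove this theorem: it is quoted from Loi \cite{le1998verdier}, so there is no internal proof to compare against. Your outline does follow the standard architecture of Loi's argument, and of o-minimal stratification theorems generally: (i) produce a $C^r$ cell decomposition compatible with $Z$, the $A_i$, and $f$, which delivers properties (1)--(3); (ii) show that the regularity condition in question --- here Verdier's (w) --- holds \emph{generically}, i.e., that for each pair $X\subseteq\operatorname{fr}(Y)$ the definable set of points of $X$ where (w) fails has dimension strictly less than $\dim X$; (iii) refine by induction on dimension. The skeleton is the right one.

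There are two gaps. First, a side claim is false as stated: the frontier condition of \cref{def:stratifVerdier} is \emph{not} automatic for cell decompositions. The relative frontier of a cell is definable of strictly smaller dimension by \cref{th:smallfrontier}, but it need not be a union of cells of the same decomposition; one must decompose again compatibly with all the frontiers and recurse on the lower-dimensional part. This folds harmlessly into your dimension induction, but it is an extra step, not a freebie. Second, and more seriously, the entire mathematical content of the theorem is the genericity bound $\dim W(X,Y)<\dim X$, and your proposal asserts it rather than proves it: invoking a ``generic definable curve along which the ratio blows up'' and a ``parametric argument'' that rules this out is a description of what must be shown, not an argument. In particular, condition (w) is a quantitative, uniform Lipschitz-type estimate on $\Delta(T_X(x),T_Y(y))$ over a full neighbourhood, so it does not reduce to the one-variable asymptotics of the \cref{monotonicity_theorem} and \cref{existence_onesided_lim_omin} applied along single curves; Loi's proof of this step goes through a definable \L{}ojasiewicz-type preparation and is precisely what makes the result a theorem rather than an exercise. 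Since you flag this yourself, the proposal should be read as a correct roadmap whose core is left open.
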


\begin{example}[Set stratification]\label{ex:stratificationS}
    Consider again the semialgebraic set $S=S_1\cup S_2$ illustrated in \cref{fig:semialg}.
    A partition of $S$ into connected $C^1$-manifolds was proposed in \cref{ex:semialgstratif} and illustrated in \cref{fig:stratificationS}.
     This partition is in fact a stratification as the frontier condition holds for every stratum, which we demonstrate on the stratum $I$ of dimension 2, the stratum $E_1$ of dimension 1 and the stratum $P_1$ of dimension 0 (recall the notation in \cref{ex:semialgebraic}). We have
     \[
        \operatorname{cl}_S(I)\setminus I = \bigcup_{i=1}^{12} P_i\cup E_i, \quad
        \operatorname{cl}_S(E_1)\setminus E_1 = P_1\cup P_2, \quad
        \operatorname{cl}_S(P_1)\setminus P_1 = \varnothing,
     \]
     i.e., the relative interior of each strata (left-hand side) is indeed a union of strata (right-hand side).
     Moreover, this stratification is Verdier, as is every stratification in $\R^2$; see \cref{rem:r2Verdier}.
\end{example}

\begin{remark}[Stratifications in $\mathbb{R}^2$ are Verdier]\label{rem:r2Verdier}
    We comment on the fact that in $\R^2$, the condition (w) always holds, and thus every stratification in $\R^2$ is a Verdier stratification. Note that the Verdier condition applies only to pairs such that $X\subseteq \operatorname{fr}(Y)$, so in particular $\dim(X) < \dim(Y)$ by \cref{th:smallfrontier}. Consequently, either $\dim(X) = 0$ or $\dim(Y) = 2$. As soon as $X$ is a zero-dimensional manifold, or $Y$ is a full-dimensional manifold, then their tangent spaces are $\{0\}$, or $\bbR^n$, so that, for any $x\in X$ and $y \in Y$, there holds $\tangent{x}{X} \subset \tangent{y}{Y}$, and in turn $\Delta(\tangent{x}{X}, \tangent{y}{Y}) = 0$. Hence, in $\R^2$ the condition (w) holds at any $\bar{x} \in X$, for any constant $C>0$ and neighborhood $U$.
\end{remark}

\begin{figure}[t]
  \centering
  \resizebox{\linewidth}{!}{
\begin{tikzpicture}
  \begin{axis}[
      width=10cm,
      height=8cm,
      view={145}{12},
      xmin=-1.05, xmax=1.05,
      ymin=-1.05, ymax=1.05,
      zmin=-1.45, zmax=1.05,
      xtick=\empty, ytick=\empty, ztick=\empty,
      xlabel=$x$, ylabel=$y$, zlabel=$z$,
      legend pos=outer north east,
      legend cell align=left,
      colormap={cuspmap}{color=(tomato) color=(tomato)},
    ]
    \addplot3[
      surf,
      shader=flat,
      opacity=0.6,
      faceted color=tomato!55!black,
      samples=45,
      samples y=45,
      domain=-1:1,
      y domain=-1.1:1.1,
      z buffer=sort,
    ] ({x}, {y*(x^2-y^2)}, {x^2-y^2});
    \addlegendentry{$M_2$ (smooth surface)}
    \addplot3[
      chartreuse,
      no marks,
      line width=2.5pt,
    ] coordinates {(-1,0,0) (1,0,0)};
    \addlegendentry{$M_1'$ (punctured $x$-axis)}
    \addplot3[
      only marks,
      mark=*,
      mark size=2.5pt,
      black,
    ] coordinates {(0,0,0)};
    \addlegendentry{$M_0$ (origin)}
  \end{axis}
\end{tikzpicture}
}
  \caption{The Whitney cusp $W=\{y^2+z^3-x^2z^2=0\}$, with the Verdier stratification into the smooth surface $M_2$, the punctured $x$-axis $M_1'$, and the origin $M_0$. The naive two-stratum partition $\{M_0\cup M_1',\,M_2\}$ is a stratification, but fails the Verdier condition at the origin; see \cref{nex:whitneycusp}.}
  \label{fig:whitneycusp}
\end{figure}

\begin{nonExample}[The Whitney cusp]\label{nex:whitneycusp}
  By \cref{rem:r2Verdier}, in $\R^2$ every stratification is automatically Verdier.
  Already in $\R^3$, a partition into manifolds satisfying the frontier condition may fail the Verdier condition~(w).
  A standard witness is the \emph{Whitney cusp}, the surface
  \[
    W \ = \ \{(x,y,z)\in\R^3 \ : \ y^2 + z^3 - x^2 z^2 = 0\},
  \]
  which folds along the entire $x$-axis; see \cref{fig:whitneycusp}.
  Partitioning $W$ by smoothness yields two strata: $M_1$, the $x$-axis, and $M_2 := W\setminus M_1$, the surface points admitting a $2$-dimensional Euclidean neighborhood.
  While $\{M_1,M_2\}$ is a stratification (each stratum is a connected $C^1$-manifold and the frontier condition holds), it is \emph{not} a Verdier stratification.
  To see this, take $X = M_1$ and $Y = M_2$, and consider the two sequences
  \[
    q_i \ := \ \big(\tfrac1i,\,0,\,0\big) \ \in \ M_1,
    \qquad
    p_i \ := \ \big(\tfrac1i,\,0,\,\tfrac1{i^2}\big) \ \in \ M_2.
  \]
  Denoting $e_1 = (1,0,0)$, the tangent space at $q_i$ expresses as $\tangent{q_i}{M_1} = \R e_1$.
  Besides, by denoting $F(x,y,z) = y^2 + z^3 - x^2z^2$ a manifold-defining map for $W$ (so that $W = F^{-1}(\{0\})$), the tangent space at $p_i$ expresses as $\tangent{p_i}{M_2} = \ker \D F(p_i) = \nabla F(p_i)^\perp = (-2i^{-5},\,0,\,i^{-4})^\perp$; recall \cref{sec:conventions}.
  We can now compute the distance between tangent spaces featured by the Verdier condition:
  \begin{align*}
\Delta\big(\tangent{q_i}{M_1},\,\tangent{p_i}{M_2}\big)
&= \operatorname{dist}\big(e_1,\,\tangent{p_i}{M_2}\big) \\
&= \left|\left\langle e_1,\,
\frac{\nabla F(p_i)}{\|\nabla F(p_i)\|}
\right\rangle\right| \\
&= \left|\left\langle e_1,\,
\frac{(-2i^{-1},\,0,\,1)}{\sqrt{1+4i^{-2}}}
\right\rangle\right| \\
&= \frac{2i^{-1}}{\sqrt{1+4i^{-2}}}.
\end{align*}
  Since $\|p_i - q_i\| = i^{-2}$, the ratio of the tangent space distance with the points distance goes to infinity:
  \[
    \frac{\Delta\big(\tangent{q_i}{M_1},\,\tangent{p_i}{M_2}\big)}{\|p_i-q_i\|}
    \ = \ \frac{2i}{\sqrt{1+4i^{-2}}} \ \xrightarrow[\;i\to\infty\;]{} \ +\infty .
  \]
  Thus, no constant $C$ can satisfy $\Delta(\tangent{q_i}{M_1},\tangent{p_i}{M_2}) \le C\|p_i-q_i\|$ for all large $i$, on any neighbourhood $U$ of the origin.
  Therefore, the Verdier condition~(w) fails for the pair $X = M_1$ and $Y = M_2$ at $\bar{x}=0$.

  Consider now the stratification which promotes the exceptional point to its own stratum: with $M_0 := \{0\}$, and $M_1' := M_1\setminus\{0\}$, the stratification $\{M_0, M_1', M_2\}$ \emph{is} a Verdier stratification of $W$.
  Thus, in space of dimension higher than 2, Verdier regularity is a genuine constraint that the partition must be designed to meet.
  \Cref{verdier_stratification} guarantees this is always possible.
\end{nonExample}

\begin{remark}[Verdier regularity implies Whitney]\label{remark_Verdier_implies_Whitney}
The proof of the \cref{lmm:stratfun} below requires a so-called \emph{Whitney-(a)} stratification. In fact, definable Verdier stratifications always have this property, as Loi shows that a Verdier stratification automatically has the Whitney-(b) property~\cite[1.10]{le1998verdier} in the o-minimal setting, whereas Whitney-(b) stratifications are always Whitney-(a) stratifications regardless of the setting.
See~\cite{trotman2020stratification} for more on stratifying conditions such as Verdier and Whitney-(a)/(b).
\end{remark}

\begin{remark}[Finite VC dimension]
The above discussion presented dimension and stratification results, a form of \emph{geometric tameness} for sets definable in an o-minimal structure.
For the sake of completeness, let us also mention that (high-dimensional) definable sets also enjoy \emph{combinatorial tameness} properties, that are relevant to statistical learning (although not needed for~\Cref{sec:descentmethods}).
Namely, o-minimal structures have the so-called \emph{non-independence property (NIP)}, which implies that definable families always have finite \emph{Vapnik-Chervonenkis (VC) dimension}~\cite[Chap. 5]{driesTameTopologyOminimal1998}.
This implies that definable hypothesis spaces are subject to the so-called \emph{Fundamental Theorem of Statistical Learning}, which tells us they are always \emph{PAC learnable}. For more on this part of the story see e.g.,~\cite{chase2019model,krapp2024measurability}.
\end{remark}

\section{Convergence of Subgradient Method}
\label{sec:descentmethods}

\noindent
The previous section provided convergence guarantees at the elementary level of definable curves, including the existence of one-sided limits of definable curves, and the convergence of central path methods in optimization.
In this section, we illustrate how the properties of definable objects yield convergence guarantees through the example of Stochastic Subgradient Method (SSM), also known as Stochastic Gradient Descent (SGD) (in the case where the function is differentiable).
We provide in this section an exposition of known results, notably those in Davis \emph{et al} \cite{davis2020stochastic}, and Bolte \emph{et al} \cite{bolte2007clarke}.

\medskip\noindent
In \Cref{sec:prelims}, we recall some notions of variational analysis, useful when dealing with nonsmooth objects.
In \Cref{sec:SSMcont}, we discuss a continuous time version of SSM, as a first step towards the discrete setting.
In \Cref{sec:SSMdisc}, we move to the setting of interest, where the iteration is discrete.
Finally, we provide in \Cref{sec:remarks} complementary remarks that connect the discrete setting with implemented methods.

\subsection{Preliminaries from variational analysis}
\label{sec:prelims}

We first recall the notion of Clarke subdifferential and provide some context on its relevence in nonsmooth nonconvex optimization.
Then, we complete the notion of manifold, introduced in \Cref{sec:conventions}, with  the notion of
Riemannian differential and Riemannian gradient.

\medskip\noindent
The notion of gradient, valid and useful for differentiable functions, is well extended to the nondifferentiable setting by that of Clarke subdifferential.
This notion is applicable when the points where the function is not differentiable have measure zero.
Interestingly, this holds true as soon as the function is locally Lipschitz -- this is Rademacher's theorem \cite{nekvindaSimpleProofRademacher1988}, and also by the stronger and independent stratification statement \cref{verdier_stratification}.

\begin{definition}\label{def:Clarkesubdiff}
Suppose $f:\R^n\to\R$ is a locally Lipschitz function. The \textbf{Clarke subdifferential} $\partial^cf$ is the following set-valued map:
\[
\partial^cf \ : \ \R^n\rightrightarrows\R^n,\quad x \ \mapsto \ \partial^cf(x) \ := \ \operatorname{conv}\{\lim_{i\to\infty}\nabla f(x_i):\text{$x_i\to x$, $x_i\in\operatorname{Diff}(f)$}\},
\]
where $\operatorname{Diff}(f)\subseteq\R^n$ denotes the set of points $x_i\in\R^n$ where $f$ is (Fr\'{e}chet) differentiable; see \cref{def:Frechet}. An element of the Clarke subdifferential is
called a~\textbf{subgradient}.
\end{definition}

\noindent The Clarke subdifferential captures (most of) the first-order geometry of a function.
In particular, it allows one to formulate a necessary condition on the local minimizers of a function, known as \emph{Fermat's rule}
\cite[Th. 10.1]{rockafellar2009variational}.
Except in specific cases (e.g., \Cref{sec:momSOS}),
 finding global, or even local minimizers of a general nonsmooth nonconvex function is too difficult.
In what follows, we thus settle for finding Clarke critical points.
\begin{proposition}[Fermat's rule]
    If $\bar{x}$ is a local minimizer of $f$, then $0\in\partial^c f(\bar{x})$.
\end{proposition}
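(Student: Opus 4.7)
The plan is to argue by contradiction, exploiting the fact that $\partial^c f(\bar x)$ is defined as a convex hull. Since $f$ is locally $L$-Lipschitz on some ball $B(\bar x,r)$, Rademacher's theorem ensures that $\operatorname{Diff}(f)$ has full Lebesgue measure in $B(\bar x,r)$ and that every gradient there has norm at most $L$. Hence the cluster set $V := \{\lim_{i} \nabla f(x_i) : x_i \to \bar x,\ x_i\in\operatorname{Diff}(f)\}$ is a nonempty subset of $\overline{B(0,L)}$, and $\partial^c f(\bar x) = \operatorname{conv}(V)$ is a nonempty compact convex subset of $\R^n$. Suppose, towards a contradiction, that $0 \notin \partial^c f(\bar x)$; by the Hahn--Banach separation theorem there exist a unit vector $d\in\R^n$ and a constant $c>0$ such that $\langle v,d\rangle < -c$ for every $v \in \partial^c f(\bar x)$. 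Note that the convex-hull structure is essential here: for $f(x)=|x|$ at $\bar x=0$ one has $V=\{-1,1\}$, so $0$ is not itself a limit of gradients but only a \emph{convex combination} of such.

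I would next propagate this strict separation to nearby classical gradients, showing that on some neighborhood $U \subseteq B(\bar x,r)$ of $\bar x$ one has $\langle \nabla f(x),d\rangle < -c/2$ for every $x \in U \cap \operatorname{Diff}(f)$. Indeed, failing this, a sequence $x_i \to \bar x$ in $\operatorname{Diff}(f)$ with $\langle \nabla f(x_i),d\rangle \geq -c/2$ would, after extracting a convergent subsequence of its bounded gradients, yield an element $v \in V \subseteq \partial^c f(\bar x)$ violating the separation. The remainder then consists in integrating this one-sided gradient bound along a short segment emanating from $\bar x$ to contradict the local minimality of $\bar x$.

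The subtlety, and the main technical obstacle, is that the naive segment $t \mapsto \bar x + td$ could in principle lie entirely inside the measure-zero set $\R^n\setminus\operatorname{Diff}(f)$, so that $t\mapsto f(\bar x+td)$ need not be recoverable from $\nabla f$. To circumvent this I would invoke Fubini's theorem applied to the indicator of that null set: for almost every perturbed direction $d'$ near $d$, the line $t\mapsto \bar x + t d'$ meets $\operatorname{Diff}(f)$ for a.e.\ $t\in[0,\delta]$. Choosing such a $d'$ with $\|d'-d\|$ small enough that $\langle \nabla f(x),d'\rangle < -c/4$ still holds on a (possibly smaller) neighborhood of $\bar x$, and with $\delta>0$ so that $\bar x + [0,\delta]d' \subset U$, the function $\varphi(t) := f(\bar x + td')$ is absolutely continuous and satisfies $\varphi'(t) = \langle \nabla f(\bar x+td'),d'\rangle < -c/4$ for a.e.\ $t\in[0,\delta]$. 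Integrating gives $f(\bar x + \delta d') - f(\bar x) \leq -c\delta/4 < 0$, contradicting that $\bar x$ is a local minimizer. A shorter route—noting that the Fréchet subdifferential at a local minimizer trivially contains $0$, and citing the standard inclusion of the Fréchet subdifferential into the Clarke subdifferential for locally Lipschitz functions—is available, but the direct argument above fits most cleanly with the convex-hull definition of $\partial^c f$ adopted here.
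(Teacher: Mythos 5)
Your argument is correct, and it is genuinely different from what the paper does: the paper offers no proof of this proposition at all, deferring to \cite[Th.~10.1]{rockafellar2009variational} (and, implicitly, to the equivalence between the gradient-limit formula of \cref{def:Clarkesubdiff} and Clarke's original definition via the generalized directional derivative, which is the route on which the cited result and the usual one-line proof ``$f^\circ(\bar x;v)\geq 0$ for all $v$'' rely). You instead give a self-contained proof working directly with the convex-hull-of-gradient-limits definition adopted in the paper: strict separation of $0$ from the compact convex set $\partial^c f(\bar x)$, propagation of the resulting one-sided bound $\langle\nabla f(x),d\rangle<-c/2$ to a full neighborhood via a compactness/subsequence argument, and then the Fubini-in-polar-coordinates device to select a nearby direction $d'$ whose ray meets $\operatorname{Diff}(f)$ for a.e.\ parameter, so that the Lipschitz (hence absolutely continuous) function $t\mapsto f(\bar x+td')$ can be integrated to produce a strict decrease contradicting local minimality. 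All the steps check out: the gradients are uniformly bounded by the local Lipschitz constant so the cluster set is nonempty and compact, the perturbation $\|d'-d\|<c/(4L)$ preserves the separation, and the fundamental theorem of calculus applies on the full-measure set where both $\varphi'$ exists and the point lies in $\operatorname{Diff}(f)$. What the citation buys the paper is brevity; what your argument buys is independence from Clarke's directional-derivative machinery, using only Rademacher's theorem and the definition actually stated in the text---and your identification of the ``segment may avoid $\operatorname{Diff}(f)$'' pitfall, resolved by Fubini, is exactly the right technical point to worry about.
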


\noindent We summarize below some useful properties of the Clarke subdifferential.
\begin{proposition}\label{prop:clarke}
Suppose $f:\R^n\to\R$ is a locally Lipschitz function and $x\in\bbR^n$. Then
    \begin{enumerate}[i)]
        \item if $f$ is definable in a structure, then $\partial^c f$ is also definable in that structure;
        \item $\partial^c f(x)$ is convex, compact, and nonempty;
        \item $\partial^c f(x)$ is upper semicontinuous, i.e.,  for any $\epsilon>0$, there exists $\eta>0$ such that
        \begin{equation*}
            \partial^c f(y) \subset \partial^c f(x) + \ball(0, \epsilon), \quad \text{ for any } y \in \ball(x, \eta);
        \end{equation*}
        \item if $0 \not\in \partial^c f(x)$, then there exists $\eta>0$ such that
        $0 \not\in\partial^c f(y)$ for any $y \in \ball(x, \eta)$.\label{it:uppersemi}
    \end{enumerate}
\end{proposition}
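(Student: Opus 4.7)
The plan is to dispatch the four items in the natural order, with (i) being the only step that genuinely rests on tame geometry; (ii)--(iv) are classical consequences of $f$ being locally Lipschitz, rooted in Rademacher's theorem. For (i) I would proceed in three steps. First, the set of points of Fréchet differentiability,
\[
\operatorname{Diff}(f) \ = \ \bigl\{x : \exists g\in\R^n,\ \forall\epsilon>0,\ \exists\delta>0,\ \forall y,\ \|y-x\|<\delta \Rightarrow |f(y)-f(x)-\langle g,y-x\rangle|\leq\epsilon\|y-x\|\bigr\},
\]
is cut out by a first-order formula in the language expanded by $f$, and the same formula with the leading $\exists g$ removed displays the graph of $\nabla f$; this uses closure under first-order operations granted by axioms (S1)--(S6) and \cref{definable_composability}. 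Second, the set of cluster values of $\nabla f$ at $x$,
\[
L(x) \ := \ \{v\in\R^n : \forall\eta>0,\ \exists z\in\operatorname{Diff}(f),\ \|z-x\|<\eta \text{ and } \|\nabla f(z)-v\|<\eta\},
\]
is a definable family in $x$, after replacing the sequence formulation in \cref{def:Clarkesubdiff} by this equivalent $\forall\eta\exists z$ description of cluster points. Third, I would invoke Carathéodory's theorem to write
\[
\partial^c f(x) \ = \ \Bigl\{\sum_{i=1}^{n+1}\lambda_i v_i : \lambda_i\geq 0,\ \sum_i\lambda_i=1,\ v_i\in L(x)\Bigr\},
\]
which exhibits $\partial^c f$ as a definable set-valued map.

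For (ii), convexity is free from the convex-hull step. Nonemptiness uses Rademacher's theorem (so that $\operatorname{Diff}(f)$ is dense near $x$) together with the local Lipschitz bound on $\nabla f$: extracting a limit of $(\nabla f(z_k))$ for some $z_k\to x$ in $\operatorname{Diff}(f)$ gives an element of $L(x)\subseteq\partial^c f(x)$. That same Lipschitz bound makes $L(x)$ bounded, and a diagonal argument makes $L(x)$ closed; its convex hull is then compact in finite dimension. For (iii), I would argue by contradiction: if $y_k\to x$ and $g_k\in\partial^c f(y_k)$ with $\operatorname{dist}(g_k,\partial^c f(x))\geq\epsilon$, Carathéodory writes each $g_k$ as a convex combination of at most $n+1$ vectors $v_i^{(k)}\in L(y_k)$; the Lipschitz bound keeps the whole collection uniformly bounded, so passing to convergent subsequences of the coefficients and of the $v_i^{(k)}$, and unwinding the definition of $L(y_k)$, produces limits $v_i\in L(x)$ and hence $g_\infty:=\lim_k g_k\in\partial^c f(x)$, contradicting the separation. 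Finally, (iv) is an immediate corollary: set $\epsilon:=\tfrac12\operatorname{dist}(0,\partial^c f(x))>0$ (positive by the compactness half of (ii)), apply (iii) to obtain $\eta>0$ with $\partial^c f(y)\subseteq\partial^c f(x)+\ball(0,\epsilon)$ for $y\in\ball(x,\eta)$, and observe that $0\notin\partial^c f(x)+\ball(0,\epsilon)$ by the choice of $\epsilon$.

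The main obstacle sits inside (i). Two points are delicate. First, the sequential cluster-point construction in \cref{def:Clarkesubdiff} must be recast as a first-order formula, since o-minimality gives no direct handle on sequences; the $\forall\eta\exists z$ characterisation above accomplishes this while expressing exactly the same set. Second, the convex-hull operation does not in general preserve definability, so Carathéodory's finite representation is essential; it applies precisely because $L(x)$ is bounded (by the local Lipschitz constant of $f$), confining the simplex of coefficients to a compact definable set. Once these two observations are secured, the remaining items are routine consequences of Rademacher's theorem and ordinary compactness.
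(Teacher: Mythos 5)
Your proof is correct, and it is considerably more self-contained than the paper's, which disposes of the proposition almost entirely by citation: part (i) is left as an exercise, parts (ii) and (iii) are delegated to Clarke's monograph \cite[Prop.~2.1.2(a), Prop.~2.1.5(d)]{clarkeOptimizationNonsmoothAnalysis1990} (where the arguments run through the generalized directional derivative $f^\circ$ and its support-function characterization, connected to \cref{def:Clarkesubdiff} via \cite[Thm.~2.5.1]{clarkeOptimizationNonsmoothAnalysis1990}), and only part (iv) is argued in the text --- by exactly the route you take, namely choosing $\epsilon$ with $0\notin\partial^c f(x)+\ball(0,\epsilon)$ and invoking (iii). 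Your treatment of (ii) and (iii) works directly from the limiting-gradient definition (Rademacher, local boundedness of $\nabla f$, Carath\'eodory plus compactness of the coefficient simplex), which is the natural path given how \cref{def:Clarkesubdiff} is stated here and avoids introducing $f^\circ$ at all; what you lose is the support-function calculus that Clarke's route provides for free. Your solution to (i) --- recasting the sequential cluster-point condition as the $\forall\eta\,\exists z$ formula $L(x)$ and then applying Carath\'eodory to make the convex hull a projection of a definable set --- is the intended solution to the exercise and is sound. One side remark is off, though it does not damage the argument: the convex hull of a definable subset of $\R^n$ \emph{is} always definable, precisely because Carath\'eodory's theorem holds for arbitrary (not necessarily bounded) sets; boundedness of $L(x)$ is needed for compactness in (ii) and for the subsequence extraction in (iii), not for the definability of $\operatorname{conv}L(x)$.
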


\begin{proof}
    Part i) is left as an exercise; this uses axiom (S6).
    Part ii) is proved in \cite[Proposition 2.1.2(a)]{clarkeOptimizationNonsmoothAnalysis1990}, using Theorem 2.5.1 \cite{clarkeOptimizationNonsmoothAnalysis1990} to connect \cref{def:Clarkesubdiff} to Clarke's subdifferential definition \cite[p. 27]{clarkeOptimizationNonsmoothAnalysis1990}.
    Part iii) is proved in \cite[Proposition 2.1.5(d)]{clarkeOptimizationNonsmoothAnalysis1990}.
    Part iv) is an application of part ii):
    since $0 \not\in \partial^c f(x)$ and $\partial^c f(x)$ is a~closed set, there exists $\epsilon>0$ such that $0 \not\in \partial^c f(x) + \ball(0, \epsilon)$.
    By part iii), there exists $\eta >0$ such that $\partial^c f(y) \subset \partial^c f(x) + \ball(0, \epsilon)$ for all $y \in \ball(x, \eta)$.
    Hence, $0 \not\in \partial^c f(y)$ for all $y \in \ball(x, \eta)$.
\end{proof}

\noindent

\medskip\noindent
We now recall the notion of Riemannian differential and gradient.
Consider a $\manDim$-dimensional $\C^{\smoothDeg}$-manifold $\M$ included in $\bbR^n$, a point $\vx\in\M$, and a function $\funman:\M\to\bbR$.
The \textbf{Riemannian differential} of $\funman$ at\;$x$ is the operator $\D \funman(x):\tangentM\to\bbR$ defined by $\D \funman(x)[\eta]:=\left. \frac{\mathrm{d}}{\mathrm{d}t}\funman\circ \smoothcurve(t) \right|_{t=0}$ when that limit exists, where $\smoothcurve$ is a $C^1$ curve on $\M$ such that $\smoothcurve(0)=x$ and $\smoothcurve'(0)=\eta$.
When $\D \funman(x)$ is a linear operator, we say that $f$ is differentiable relative to $M$ at $x$.
In that case, we call \textbf{Riemannian gradient} the unique vector $\grad \funman(x)$ of $\tangentM$ such that, for any tangent vector $\eta$, $\D \funman(x)[\eta] = \langle \grad \funman(x), \eta \rangle$.
Finally, we say that $f$ is \textbf{continuously differentiable}, or $C^1$, on $\M$ when
the Riemannian gradient exists and is continuous at any point of $\M$.

\medskip\noindent
We distinguish between the euclidean gradient $\nabla f(x)$, the definition of which we recall in \Cref{sec:appx_Frechet}, and
the Riemannian gradient $\grad f(x)$ relative to $\M$.
We choose not to include $\M$ in the notation $\grad f(x)$ as the manifold is always clear from context.
Moreover, if there exists a function $g:\bbR^n\to\bbR$ such that $g$ is $\C^1$ at $x$, and $f$ coincides with $g$ on $\M$ near $x$, then the Riemannian gradient of $f$ is the projection of the euclidean gradient $g$ on the tangent space:
  \(\grad f(x) = \proj_{\tangentM}(\nabla g(x))\).
One readily checks that, when $\M$ is definable, then so are $T_\M$, $N_\M$, $\proj_{T_M(\cdot)}$, $\D f$, and $\grad f$.

\subsection{Descent of subgradient curves in continuous time}
\label{sec:SSMcont}
\emph{In this subsection we fix an o-minimal structure $\mathcal{R}$, so ``definable'' means ``definable in $\mathcal{R}$''.}

\medskip\noindent
We consider a definable locally Lipschitz function $f:\bbR^n \to \bbR$ and an absolutely continuous
curve $x:[0, +\infty)\to\bbR^n$ such that
\begin{equation}\label{eq:SSMcont}
  \tag{SSM\textsuperscript{cont}}
    x'(t) \ \in \ -\partial^c f(x(t)), \quad\text{for a.e. $t\geq 0$}.
\end{equation}
Note that, while $\partial^c f$ is definable (by \cref{prop:clarke}, \emph{i)}),
it is not clear whether these conditions are sufficient for any curve $x$ that satisfies \eqref{eq:SSMcont} to be definable;
thus the following exposition does not assume $x$ to be definable.
We present the statement and proof ideas of Davis, Drusvyatskiy, Kakade and Lee \cite{davis2020stochastic}, along with earlier results of Bolte, Daniilidis, Lewis and Shiota \cite{bolte2007clarke}.
The purpose of this section is to show that $f$ decreases along trajectories of \cref{eq:SSMcont}.
Specifically, we aim to prove the following result.

\begin{proposition}[Subgradient descent]\label{lemma:subgradientdescent}
Suppose $f:\R^n\to\R$ is a definable locally Lipschitz function and $x:[0,+\infty)\to\R^n$ is an absolutely continuous curve satisfying \cref{eq:SSMcont}.
Then, for all $t \ge 0$,
\begin{equation*}
    f(x(t)) \ = \ f(x(0)) - \int_0^t\operatorname{dist}^2(0,\partial^cf(x(\tau)))d\tau.
\end{equation*}
In particular, $f\circ x$ is non-increasing.
\end{proposition}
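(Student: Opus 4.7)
The overall plan is to derive the pointwise identity
\begin{equation*}
  \tfrac{d}{dt}(f\circ x)(t) \ = \ -\operatorname{dist}^2(0, \partial^c f(x(t))) \qquad \text{for a.e.\ } t\ge 0,
\end{equation*}
and integrate. The composition $f\circ x$ is absolutely continuous since $x$ is absolutely continuous and $f$ is Lipschitz on the compact set $x([0,t])$, so the integration is legitimate; monotonicity is then immediate as the integrand is nonnegative. The subtlety is that for an arbitrary locally Lipschitz $f$ the quantity $\langle v, x'(t)\rangle$ with $v\in\partial^c f(x(t))$ need not be independent of $v$ --- this singleton-ness is exactly the tame-geometric feature to be exploited.

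The first step is to invoke Loi's \cref{verdier_stratification} (applied to $f$ with $r=1$) to obtain a finite definable partition $\stratif=\{M_1,\ldots,M_k\}$ of $\R^n$ into $C^1$ strata on each of which $f$ is $C^1$-smooth, and which by \cref{remark_Verdier_implies_Whitney} automatically satisfies the Whitney-(a) condition. The crucial consequence is the \emph{projection formula}: for every stratum $M\in\stratif$, every $\bar x\in M$, and every $v\in\partial^c f(\bar x)$,
\begin{equation*}
  \proj_{\tangent{\bar x}{M}}(v) \ = \ \grad (f|_M)(\bar x).
\end{equation*}
This is established by writing $v$ as a convex combination of limits $\lim_n \nabla f(y_n)$ with $y_n\to\bar x$ and $y_n\in\operatorname{Diff}(f)$; after extracting a subsequence each such sequence is contained in some stratum $M'$ with $M\subseteq\operatorname{cl}(M')$, and Whitney-(a) transports tangent vectors of $M$ at $\bar x$ into limits of tangent vectors of $M'$ at $y_n$, along which only the Riemannian component of $\nabla f|_{M'}$ contributes to $\langle v, \cdot\rangle$.

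The second step is to show that for a.e.\ $t$, if $x(t)\in M_{i(t)}$ then $x'(t)\in \tangent{x(t)}{M_{i(t)}}$. Decomposing $[0,+\infty)=\bigsqcup_i x^{-1}(M_i)$ into finitely many measurable pieces, at each Lebesgue density point $t$ of $x^{-1}(M_{i(t)})$ the velocity $x'(t)$ arises as a limit of chords $(x(t+h_n)-x(t))/h_n$ with both endpoints in $M_{i(t)}$, forcing $x'(t)\in\tangent{x(t)}{M_{i(t)}}$. Combined with the projection formula and \cref{eq:SSMcont}, for any $v\in\partial^c f(x(t))$,
\begin{equation*}
  \langle v, x'(t)\rangle \ = \ \langle \proj_{\tangent{x(t)}{M_{i(t)}}}(v),\, x'(t)\rangle \ = \ \langle \grad(f|_{M_{i(t)}})(x(t)),\, x'(t)\rangle,
\end{equation*}
which is independent of $v$. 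Taking $v=-x'(t)\in\partial^c f(x(t))$ yields $\tfrac{d}{dt}(f\circ x)(t)=-\|x'(t)\|^2$ together with the identification $-x'(t)=\grad(f|_{M_{i(t)}})(x(t))$; any other $w\in\partial^c f(x(t))$ then satisfies $\|w\|^2\ge\|\proj_{\tangent{x(t)}{M_{i(t)}}}(w)\|^2=\|x'(t)\|^2$, so $\|x'(t)\|=\operatorname{dist}(0,\partial^c f(x(t)))$, which closes the chain rule.

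The principal obstacle will be the projection formula together with the tangent-velocity property: both are genuinely tame-geometric phenomena relying on the finiteness of the stratification and on Whitney regularity, and both are known to fail in the general locally Lipschitz setting. Once those are in hand, the stated identity follows by integrating the pointwise chain rule over $[0,t]$.
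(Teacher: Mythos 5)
Your proposal is correct and follows essentially the same route as the paper: a Verdier/Whitney stratification adapted to $f$, the projection formula $\partial^c f(x)\subset \grad f(x)+\normal{x}{M}$, the a.e.\ tangency of $x'(t)$ to its stratum, the resulting chain rule $(f\circ x)'(t)=-\|\grad f(x(t))\|^{2}=-\operatorname{dist}^2(0,\partial^c f(x(t)))$, and integration. The only cosmetic differences are that you obtain the tangency of the velocity via Lebesgue density points where the paper argues via its claim on ``bad'' times, and you sketch a direct Whitney-(a) proof of the projection formula where the paper cites \cite[Proposition 4]{bolte2007clarke}; both variants are sound, though you leave implicit the Lipschitz first-order-approximation step identifying $(f\circ x)'(t)$ with $\langle\grad f(x(t)),x'(t)\rangle$, which the paper isolates as \cref{lmm:foapp}.
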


\noindent
The following proposition is a stratification result valid for definable, locally Lipschitz, functions; it extends the basic \cref{lemma:semialgsetstrat,verdier_stratification}.
For any such function, there exists a partition of the space into $C^1$-manifolds, over which the function behaves smoothly.
In addition, any Clarke subgradient decomposes as a~sum of two perpendicular vectors: some normal vector and the Riemannian gradient.
This was first proved by Bolte \emph{et al} \cite[Proposition 4]{bolte2007clarke}.
\begin{projectionFormula*}\label{lmm:stratfun}
 Suppose $f:\R^n\to\R$ is a~definable locally Lipschitz function.
 Then, for any integer $r \ge 1$, there exists a definable $C^r$-stratification $\mathfrak{S}$ of $\mathbb{R}^n$ such that for each stratum $M\in\mathfrak{S}$, we have that $f|_M$ is $C^r$ and
\[\label{eq:projformula}
\partial^c f(x) \subset \nabla_R f(x) + \normal{x}{\M}, \quad \text{ for every } x\in M.
\]
\end{projectionFormula*}
\begin{proof}
  This result follows from \cite[Proposition 4]{bolte2007clarke}; \cite{davis2025active,bianchiConvergenceConstantStep2022} also provide similar statements.
  First, $f$ is a~definable function, thus it admits a Whitney stratification
  \cite[Lemma 8]{bolte2007clarke}; see also \cref{remark_Verdier_implies_Whitney}.
  Second, $f$ is locally Lipschitz continuous, thus it is lower semicontinuous.
  Consequently, the conclusion of \cite[Proposition 4]{bolte2007clarke} is equivalent to the claim.
\end{proof}
\noindent
When \cref{lmm:stratfun} holds, we obtain a useful geometric fact summarized in the following corollary; see \cref{fig:projformula_inclusion} for an illustration, and \cref{fig:projformula_noinclusion} for a counterexample.
\begin{corollary}\label{cor:projformula}
  Consider the setting of \cref{lmm:stratfun} and a point $x\in\M\in\stratif$.
  If there exists a vector $v\in\bbR^n$ such that $v \in \partial^c f(x)$ and $v \in \tangent{x}{\M}$,
  then,
  \begin{equation*}
    v = \grad f(x) \qquad \text{and} \qquad \|\grad f(x)\| = \dist(0, \partial^c f(x)).
  \end{equation*}
In particular, if $x$ is a Clarke critical point for $f$, then $x$ is a critical point of $f|_M$ in the usual sense, i.e., $\nabla_Rf(x)=0$.
\end{corollary}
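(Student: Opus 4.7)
The plan is to exploit two ingredients in tandem: the projection formula $\partial^c f(x) \subset \grad f(x) + \normal{x}{\M}$ supplied by \cref{lmm:stratfun}, and the orthogonal decomposition $\bbR^n = \tangent{x}{\M} \oplus \normal{x}{\M}$ built into the definition of the Riemannian gradient on page~\pageref{sec:defRiemanngradient}.

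First, to establish $v = \grad f(x)$, I would apply the projection formula to the hypothesized $v \in \partial^c f(x)$, which yields a decomposition $v = \grad f(x) + n$ for some $n \in \normal{x}{\M}$. Since $v \in \tangent{x}{\M}$ by hypothesis and $\grad f(x) \in \tangent{x}{\M}$ by construction, their difference $n = v - \grad f(x)$ lies in $\tangent{x}{\M} \cap \normal{x}{\M} = \{0\}$, forcing $v = \grad f(x)$.

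As an immediate byproduct, we learn that $\grad f(x)$ itself is a member of $\partial^c f(x)$. To prove the norm equality, I would then take an arbitrary $w \in \partial^c f(x)$, invoke the projection formula once more to write $w = \grad f(x) + n'$ with $n' \in \normal{x}{\M}$, and apply the Pythagorean identity
\[
\|w\|^2 \ = \ \|\grad f(x)\|^2 + \|n'\|^2 \ \geq \ \|\grad f(x)\|^2,
\]
valid because $\grad f(x) \perp n'$. Taking the infimum over $w \in \partial^c f(x)$ and combining with the membership $\grad f(x) \in \partial^c f(x)$ concludes that $\dist(0,\partial^c f(x))$ is attained at $\grad f(x)$ with value $\|\grad f(x)\|$.

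The corollary is essentially a book-keeping consequence of the projection formula and the orthogonality of tangent and normal spaces, so I do not anticipate any genuine obstacle at this stage; the heavy lifting has already been done upstream in \cref{lmm:stratfun}, which is what confines the a~priori large and convex Clarke subdifferential to an affine slice along the normal direction of the active stratum. The only subtlety worth flagging is the order of the two arguments: one must first extract $\grad f(x) \in \partial^c f(x)$ from the hypothesis on $v$ before the minimum-norm characterization can be promoted from an inequality into an attained distance.
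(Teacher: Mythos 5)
Your proof is correct and follows essentially the same route as the paper: the paper simply packages your two steps (the orthogonal decomposition forcing $v=\grad f(x)$, and the Pythagorean minimum-norm argument) into the auxiliary \cref{lmm:geomprojform} in the appendix, which it then invokes with $V=\tangent{x}{\M}$ and $C=\partial^c f(x)$. Your remark about the order of the arguments—establishing $\grad f(x)\in\partial^c f(x)$ before the distance claim—is exactly the dependency of part (2) on part (1) in that lemma.
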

\begin{proof}
  Let $V = \tangent{x}{\M}$, and $C = \partial^c f(x)$.
  We thus have that $V\cap C \neq \varnothing$, and $C \subset \grad f(x) + \tangent{x}{\M}^\perp$ from \cref{lmm:stratfun}. Hence \Cref{lmm:geomprojform} applies and provides the claim.
\end{proof}

\begin{figure}[t]
  \centering
  \begin{subfigure}{0.48\linewidth}
    \includegraphics[width=\textwidth]{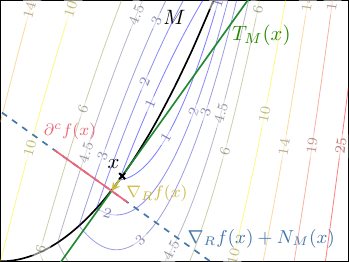}
    \caption{First case: $\nabla_R f(x) \in \partial^c f(x)$, which implies $\|\grad f(x)\| = \dist(0, \partial^c f(x))$.\label{fig:projformula_inclusion}}
    \end{subfigure}
  \begin{subfigure}{0.48\linewidth}
    \includegraphics[width=\textwidth]{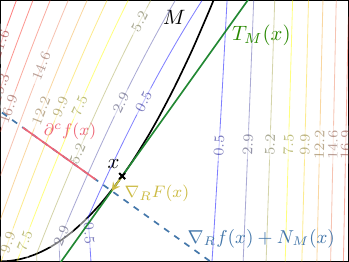}
    \caption{Second case: $\nabla_R f(x) \not\in \partial^c f(x)$, which implies $\|\grad f(x)\| < \dist(0, \partial^c f(x))$.\label{fig:projformula_noinclusion}}
  \end{subfigure}
  \caption{\label{fig:projformula}
    Illustration of \cref{lmm:stratfun} for two definable functions.
    In both cases, the stratum $\M$ is smooth, $f|_\M$ is smooth, and there holds $\partial^c f(x) \subset \nabla_R f(x) + \normal{x}{\M}$.
    In the context of a curve satisfying \cref{eq:SSMcont}, the situation \textbf{(a)} is generic, while the situation \textbf{(b)} is exceptional; see \cref{lemma:chainrule}.
  }
\end{figure}

\noindent
Using this property of the Clarke subdifferential, we can now provide an expression for the derivative of $f$ along any absolutely continuous curve $x:[0, +\infty) \to \bbR^n$.
This property has been observed many times; see \eg{} Theorem 5.8 of Davis \emph{et al} \cite{davis2020stochastic}, and the general theory from Bolte and Pauwels \cite{bolteConservativeSetValued2021}.

\begin{chainRule*}
  \label{lemma:chainrule}
  Suppose $f:\R^n\to\R$ is a~definable locally Lipschitz function and $\stratif$ is any definable $C^1$-stratification of $\R^n$ provided by \cref{lmm:stratfun}.
  Then for every absolutely continuous curve $x:[0,+\infty)\to\R^n$ we have that, for a.e. $t\ge 0$,
  \begin{enumerate}
    \item $x'(t)$ exists and $x'(t) \in \tangent{x(t)}{M_{x(t)}}$, \label{it:xdertan}
    \item $(f\circ x)'(t)$ exists and \label{it:chainrule}
      \(
        (f\circ x)'(t) \ = \  \langle \grad f(x(t)),x'(t)\rangle,
      \)
  \end{enumerate}
  where $\M_{x(t)} \in \stratif$ denotes the stratum that contains $x(t)$.
\end{chainRule*}
\noindent Before we proceed to the proof, note that we may reformulate part (\ref{it:chainrule}) as: for a.e. $t\ge 0$, $(f\circ x)'(t)$ exists and
\[
(f\circ x)'(t) \ = \ \langle v,x'(t)\rangle, \quad \text{for any } v \in \partial^c f(x(t)).
\]
This is a direct consequence of \cref{lmm:stratfun}
and part (\ref{it:xdertan}).
The derivative of $f\circ x$ thus involves the subdifferential of $f$ and the velocity of $x$ in a formula reminiscent of the smooth setting, hence the term ``chain rule''.
This is the form employed by \cite{davis2020stochastic} and others.
We prefer part (\ref{it:chainrule}) of \cref{lemma:chainrule}, as it highlights the role played by the stratification, and the importance of the smoothness of $f|_\M$.

\begin{proof}[Proof of \cref{lemma:chainrule}]
  Consider a definable locally Lipschitz function $f:\bbR^n \to \bbR$, let $\stratif$ be a definable $C^1$-stratification of $\R^n$ provided by \cref{lmm:stratfun}, and let $x:[0, +\infty) \to \bbR^n$ denote an absolutely continuous curve.
  We will prove part (\ref{it:xdertan}) with the help of a~claim using a~term ``bad''.
  We say a time $t>0$ is \emph{bad} if there exists $\varepsilon>0$ such that:
  \[
    x[(t-\varepsilon,t+\varepsilon)\setminus\{t\}]\cap M_{x(t)} \ = \ \varnothing.
  \]
    \begin{claim*}
    The set $B:=\{t\in\R^>:\text{$t$ is bad}\}$ has Lebesgue measure zero.
  \end{claim*}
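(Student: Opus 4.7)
The plan is to exploit the finiteness of the stratification $\mathfrak{S}$ together with a standard topological fact about isolated points of subsets of $\R$. Specifically, I would first unpack the definition of ``bad'' into a statement about preimages: a time $t$ is bad with associated stratum $M_{x(t)} = M$ exactly when $t$ is an isolated point of the set $x^{-1}(M) \subseteq \R$, because the displayed equation $x[(t-\varepsilon,t+\varepsilon)\setminus\{t\}]\cap M_{x(t)}=\varnothing$ says precisely that no other $s$ near $t$ has $x(s)\in M$.

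Next I would partition the bad set by the stratum the curve lies in at that instant. Since $\mathfrak{S}$ is a finite partition of $\R^n$, we have
\[
B \ = \ \bigsqcup_{M\in\mathfrak{S}} B_M, \qquad B_M \ := \ \{t\in B: x(t)\in M\},
\]
and it suffices to show each $B_M$ has measure zero, and in fact I expect to show the stronger statement that each $B_M$ is countable. By the observation above, $B_M$ is contained in the set of isolated points of $x^{-1}(M)\subseteq\R$. Any subset of $\R$ has only countably many isolated points (each isolated point lies in a punctured open interval disjoint from the set, and such intervals can be chosen with rational endpoints, giving an injection into $\Q^2$). Hence each $B_M$ is countable, and so is $B = \bigcup_{M\in\mathfrak{S}} B_M$ as a finite union of countable sets; in particular $B$ has Lebesgue measure zero.

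There is no real obstacle here: the finiteness of $\mathfrak{S}$ supplied by \cref{lmm:stratfun} together with the elementary fact that isolated-point sets in $\R$ are countable does all the work. The only small care needed is to handle the association $t\mapsto M_{x(t)}$ consistently — i.e., to note that membership of $t$ in $B_M$ refers to the ``reference'' stratum $M_{x(t)}=M$ used in the definition of bad, so that the isolation is with respect to the same $M$ as in the ``bad'' condition at $t$. Once that bookkeeping is in place, the argument is immediate, and no appeal to absolute continuity, definability of $x$, or any further property of the stratification (e.g., frontier condition, Whitney/Verdier regularity) is required at this step — those will enter only in the subsequent parts (\ref{it:xdertan}) and (\ref{it:chainrule}) of the chain rule.
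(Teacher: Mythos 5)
Your proof is correct and follows essentially the same route as the paper: partition $B$ by stratum using the finiteness of $\mathfrak{S}$, observe that each $B_M$ consists of isolated points of $x^{-1}(M)$, and conclude countability (the paper phrases this as $B_M$ being discrete, which is the same elementary fact). Your identification of ``bad'' times as isolated points of the preimage is a slightly more explicit rendering of the paper's ``the definition of bad implies directly'' step, but there is no substantive difference.
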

    Indeed, for each $M\in \stratif$, define the set
    $B_M:=\{t\in\R^>:\text{$x(t)\in M$ and $t$ is bad}\}$.
    Since $B=\bigcup_{M\in \stratif}B_M$ and $\stratif$ has finitely many elements, it suffices to show that each $B_M$ has Lebesgue measure zero.
    By definition, each $t\in B_M$ has a neighborhood $(t-\varepsilon,t+\varepsilon)$ in which $x$ meets $M$ only at $t$.
    So each $B_M$ is discrete in $\R$, hence countable, hence of Lebesgue measure zero. This establishes the claim.

  \proofof{(\ref{it:xdertan})}
  We know that for almost every $t>0$,
  $x'(t)$ exists (from $x$ being an absolutely continuous curve)
  and $t$ is not bad (from the above claim).
  Fix such a $t$ and set $M=M_{x(t)}$.
  It suffices to prove that $x'(t)\in \tangent{x(t)}{\M}$.
  Since $t$ is not bad, there is a sequence $t_i\to t$ with $t_i\neq t$ and $x(t_i)\in M$ for each $i$ (negating the definition of ``bad'' with, for instance, $\varepsilon = i^{-1}$).
  By definition of the derivative,
  \begin{equation}\label{eq:abc}
    \lim_{i\to \infty}\frac{x(t_i)-x(t)}{t_i-t} \ = \ x'(t).
  \end{equation}
  Let $h:\bbR^n\to\bbR^{n-\dim(\M)}$ denote a $\C^1$ manifold defining map of the $\C^1$-manifold $\M$ near $x(t)$, that is, a $\C^1$ mapping such that, on a neighborhood of $x(t)$, $x\in\M$ is equivalent to $h(x)=0$, and $\D h(x)$ has rank $n-\dim(\M)$.
  Fréchet differentiability of $h$ implies, for all $i$,
  \[
    h(x(t_i)) - h(x(t)) = \D h(x(t))[x(t_i) - x(t)] + o(\| x(t_i) - x(t)\|).
  \]
  Since $x(t_i)$ and $x(t)$ belong to $\M$, the left-hand side is zero.
  Dividing by $t_i - t$, for all $i$,
  \[
    0 = \D h(x(t))\left[\frac{x(t_i) - x(t)}{t_i - t}\right] + o\left(\left\|\frac{x(t_i) - x(t)}{t_i - t}\right\|\right).
  \]
  In view of \cref{eq:abc}, taking the limit $i \to \infty$ yields
  $x'(t) \in \ker \D h(x(t))=\tangent{x(t)}{\M}$.

  \proofof{(\ref{it:chainrule})}
  Let $t>0$ be a time such that $x'(t)$ exists and $x'(t)\in \tangent{x(t)}{\M}$.
  By definition of the tangent space, there is a $C^1$ curve $\gamma:(t-\varepsilon,t+\varepsilon)\to M\subseteq\R^n$ such that $\gamma(t)=x(t)$ and $\gamma'(t)=x'(t)$; in particular, $\gamma$ is a first-order approximation of $x$ at $t$ as per \cref{def:foapp}.

  Since $f|_M$ is $C^1$, we have $f\circ \gamma=f|_M\circ\gamma$ is Fréchet differentiable at $t_{0}$ -- see \cref{def:Frechet}, which implies $f\circ x$ is Fréchet differentiable at $t_{0}$ by \cref{lmm:foapp}.
  Moreover, we have $(f\circ x)'(t)=(f|_\M \circ\gamma)'(t)$.
  Thus,
  \begin{align*}
    (f\circ x)'(t) \
    &= \ (f|_M\circ\gamma)'(t) \quad&& \\
    &= \ \D f|_M(\gamma(t))[\gamma'(t)] \quad&&\text{by def. of the Riemannian differential} \\
    &= \ \langle\grad{} f(\gamma(t)),\gamma'(t)\rangle \quad&&\text{by def. of the Riemannian gradient}\\
    &= \ \langle\grad{} f(x(t)),x'(t)\rangle \quad&&\text{by $\gamma$ being a first-order approx. of $x$ at $t$. \qedhere}
  \end{align*}
\end{proof}

\medskip \noindent
We are now ready to prove \cref{lemma:subgradientdescent}.
The main idea of this proof is that, for an absolutely continuous curve that follows the SSM dynamics \cref{eq:SSMcont}, the Riemannian gradient is (almost always) contained in the Clarke subdifferential.
Geometrically, the situation of \cref{fig:projformula_inclusion} holds generically, while the situation of \cref{fig:projformula_noinclusion} is exceptional.

\begin{proof}[Proof of \cref{lemma:subgradientdescent}]
Consider a definable locally Lipschitz function $f:\bbR^n \to \bbR$ and an absolutely continuous
curve $x:[0, +\infty)\to\bbR^n$ satisfying the continuous time SSM dynamics:
\begin{equation*}
    -x'(t) \in \partial^c f(x(t)), \quad \text{ for a.e. } t \ge 0.
\end{equation*}
Let $\stratif$ denote a definable $C^1$-stratification of $\R^n$ adapted to $f$ provided by \cref{lmm:stratfun}.
Then, \cref{lemma:chainrule} (\ref{it:xdertan}) provides
\begin{align*}
    -x'(t) \in \tangent{x(t)}{\M_{x(t)}}, \quad \text{ for a.e. } t \ge 0,
\end{align*}
where $\M_{x(t)}\in\stratif$ denotes the stratum to which $x(t)$ belongs. Applying
\Cref{cor:projformula} with $v = -x'(t)$ provides
\begin{equation}\label{eq:absec}
    x'(t) = -\nabla_R f(x(t)), \quad \text{ for a.e. } t \ge 0.
\end{equation}
Thus, there holds for a.e. $t\ge 0$,
\begin{align*}
    (f\circ x)'(t) \
    &= \ \langle \grad f(x(t)), x'(t)\rangle \quad&\text{by \cref{lemma:chainrule} (\ref{it:chainrule})}   \\
    &= \ -\|\grad f(x(t))\|^2 \quad& \text{by \cref{eq:absec}} \\
    &= \ -\operatorname{dist}^2(0,\partial^cf(x(t))) \quad &\text{ by \cref{cor:projformula}}
\end{align*}
and integrating both sides yields for any $t \ge 0$
\begin{align}
&f(x(t))-f(x(0)) \ = \ \int_0^t (f\circ x)'(\tau)d\tau \ = \ -\int_0^t\operatorname{dist}^2(0,\partial^cf(x(\tau))) d\tau. \qedhere
\end{align}
\end{proof}

\subsection{Stochastic Subgradient Method converges on tame functions: a simplified, deterministic, exposition}
\label{sec:SSMdisc}

\noindent
We note that the convergence of SSM on general definable functions is a~surprisingly recent result (2020, in Davis \emph{et al} \cite{davis2020stochastic}), and it fundamentally depends on properties guaranteed by o-minimality.
We outline the general ideas of the proof, with particular emphasis on the aspects where o-minimality plays a~critical role.
For the sake of clarity and length, we focus here on a noiseless full-batch setting, and discuss extensions to noisy and minibatch settings in the next section.

\medskip\noindent
Setting the stage, we consider finding a Clarke critical point of a given function $f:\bbR^{n}\to\bbR$, that is a point $\bar{x}$ such that $0 \in \partial^c f(\bar{x})$.
To that end, we consider using the Stochastic Subgradient Method, defined as:
\begin{equation}\label{eq:SSM_clarke}
  \tag{SSM\textsuperscript{discrete}}
    x_{k+1} = \curr + \curr[\step] \curr[y], \quad \text{ for } \curr[y] \in -\partial^c f(\curr).
\end{equation}
Our main application case is the training of DNNs, which involves, for example, the minimization problem \eqref{eq:learnpb}.
For simplicity, we consider an idealized setting where $y_{k}$ is an element of the Clarke subdifferential; this setting corresponds to full-batch subgradient method.
This simplified setting suffices to discuss the features of o-minimality.
Extensions to small-batch settings and Automatic Differentiation are discussed in \cref{rem:batch,rem:clarkeAD}.

\medskip\noindent
The convergence result is given below \cite[Corollary 5.9]{davis2020stochastic}.
\begin{proposition}\label{prop:SSM_clarke_cv}
  Suppose $f:\R^n\to\R$ is locally Lipschitz and \jaedit{definable in an o-minimal structure}, and we have a sequence $(\gamma_k)$ of step sizes in $[0,+\infty)$ and a sequence $(x_k)$ of iterates in $\R^n$ such that:
  \begin{enumerate}
    \item the sequence $(\gamma_{k})$ is non-negative, square-summable, but not summable:
      \[
      \textstyle \gamma_k \ \ge \ 0, \quad \sum_{k\geq 1}\gamma_k \ = \ +\infty,\quad \text{and}\quad \sum_{k\geq 1}\gamma_k^2 \ < \ +\infty;
      \]
    \item the sequence $(x_k)$ is bounded;
    \item the sequence $(x_k)$ satisfies the SSM iteration \cref{eq:SSM_clarke}.
  \end{enumerate}
  Then every limit point $\bar{x}$ of $(x_k)$ is Clarke critical for $f$, i.e., $0 \in \partial^{c} f(\bar{x})$, and the sequence $(f(x_k))$ converges.
\end{proposition}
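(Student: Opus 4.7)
The plan is to reduce the discrete recursion \cref{eq:SSM_clarke} to its continuous-time counterpart \cref{eq:SSMcont} via an interpolation (ODE-method) argument, and then invoke the descent identity of \cref{lemma:subgradientdescent}. First, I would construct the piecewise affine interpolation $X:[0,+\infty)\to\R^n$ on the rescaled time grid $\tau_k:=\sum_{i=0}^{k-1}\gamma_i$, with $X(\tau_k)=x_k$ and affine interpolation on $[\tau_k,\tau_{k+1}]$. Since $\sum\gamma_k=+\infty$ we have $\tau_k\to+\infty$, and since $\sum\gamma_k^2<+\infty$ we have $\gamma_k\to 0$. Boundedness of $(x_k)$ together with local Lipschitzness of $f$ ensures that $(y_k)$ is uniformly bounded by part ii) of \cref{prop:clarke}, so $X$ is globally Lipschitz and in particular absolutely continuous.

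Second, I would establish the ODE-method estimate: for any sequence $s_n\to+\infty$ and horizon $T>0$, the shifted curves $X(s_n+\cdot)|_{[0,T]}$ are uniformly Lipschitz, so Arzelà--Ascoli provides a subsequence converging uniformly to some $\tilde X:[0,T]\to\R^n$; the upper semicontinuity of $\partial^c f$ from part iii) of \cref{prop:clarke}, combined with $\gamma_k\to 0$ and $\sum\gamma_k^2<+\infty$, then guarantees that $\tilde X$ is absolutely continuous and satisfies $\tilde X'(t)\in-\partial^c f(\tilde X(t))$ for a.e.\ $t\in[0,T]$. At this point \cref{lemma:subgradientdescent} applies to $\tilde X$ and yields
\begin{equation*}
f(\tilde X(T))\;=\;f(\tilde X(0))\;-\;\int_0^T\dist^2\bigl(0,\partial^c f(\tilde X(\tau))\bigr)\,d\tau.
\end{equation*}

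Third, I would use $f$ as a Lyapunov function to derive both conclusions simultaneously. For the convergence of $(f(x_k))$: if $\liminf_k f(x_k)<\limsup_k f(x_k)$, then applying the identity above to limit trajectories obtained near the oscillating iterates would produce arbitrarily large cumulative decreases of $f$, contradicting the fact that $f$ is bounded on the (bounded) orbit. For Clarke criticality of every limit point $\bar x$: if $0\notin\partial^c f(\bar x)$, then by part iv) of \cref{prop:clarke} there exist $c,\eta>0$ with $\dist(0,\partial^c f(y))\ge c$ for all $y\in\ball(\bar x,\eta)$. Choosing $T$ so small that any limit curve starting near $\bar x$ remains in $\ball(\bar x,\eta)$ on $[0,T]$, the descent identity gives $f(\tilde X(T))\le f(\tilde X(0))-c^2T$; transferring this decrement back to the discrete iterates along a subsequence $x_{k_j}\to\bar x$ contradicts the already-established convergence of $(f(x_k))$.

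The main obstacle is the second step: promoting uniform limits of $X(s_n+\cdot)$ to genuine solutions of the differential inclusion. Since $\partial^c f$ is only upper semicontinuous with convex compact values, showing that weak limits of the piecewise constant selections $y_k$ land inside $-\partial^c f(\tilde X(t))$ for a.e.\ $t$ requires a Mazur-type convex-analytic argument together with a careful accounting of the interpolation error; this is precisely where the square-summability $\sum\gamma_k^2<+\infty$ is indispensable. O-minimality enters this chain only indirectly, through the stratification \cref{lmm:stratfun} that powers \cref{lemma:subgradientdescent}; the tracking argument itself is the classical ODE method of stochastic approximation.
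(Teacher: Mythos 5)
Your overall architecture is the same as the paper's: pass from the discrete recursion to the differential inclusion \cref{eq:SSMcont} by the ODE method of stochastic approximation, then use $f$ as a Lyapunov function via the descent identity of \cref{lemma:subgradientdescent}. Your steps 1 and 2 correctly flesh out what the paper delegates wholesale to the stochastic-approximation literature (Benaïm et al., Duchi--Ruan, Borkar), and your criticality argument in step 3 is the standard one \emph{given} that $(f(x_k))$ converges. The genuine gap is in your proof that $(f(x_k))$ converges, and it is exactly the ingredient the paper isolates as the \textbf{Weak Sard} property, which your proposal never mentions.

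The difficulty is that the interpolated process is only an asymptotic pseudo-trajectory: on each fixed horizon $[s_n,s_n+T]$ it shadows a true solution with an error that vanishes as $s_n\to\infty$, but $f$ along the discrete iterates is \emph{not} monotone, and nothing prevents $f(x_k)$ from creeping back up between successive horizons. Your claim that oscillation of $f(x_k)$ would ``produce arbitrarily large cumulative decreases'' does not close, because the decreases along different limit trajectories do not telescope against the total variation of $f(x_k)$; the tracking errors are small per horizon but there are infinitely many horizons. The way this is repaired in the Benaïm--Hofbauer--Sorin / Davis et al.\ framework is to show that the limit set of the iterates is internally chain transitive, and that an ICT set must lie in the Clarke-critical set $\mathcal{C}=\{x:0\in\partial^c f(x)\}$ \emph{provided} $f(\mathcal{C})$ has empty interior (one shows $f$ of the limit set is an interval contained in $f(\mathcal{C})$, hence a point). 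Without that empty-interior condition the conclusion of the abstract theorem can fail. In the present setting the condition is supplied by a second, independent use of o-minimality: the stratification of \cref{lmm:stratfun} reduces matters to the classical Sard theorem on each of finitely many $C^1$ strata, and the \cref{th:smallsets} then shows the set of Clarke-critical values is in fact finite. So o-minimality does not enter ``only indirectly through \cref{lemma:subgradientdescent}'' as you assert at the end; it is also what makes the Lyapunov machinery applicable at all. (A minor further remark: in this noiseless setting the tracking in your step 2 needs only $\gamma_k\to 0$ and boundedness; square-summability is what controls the martingale noise in the genuinely stochastic version, not the Mazur argument.)
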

\noindent
The key idea of the proof is to replace the analysis of the discrete dynamical system given by \cref{eq:SSM_clarke}, which is notoriously difficult, with that of a continuous dynamical system.
\begin{proof}[Proof sketch]
  The continuous dynamical system is given by \cref{eq:SSMcont}.

  Classical results of stochastic approximation theory guarantee that sequences $(x_{k})$ given by \cref{eq:SSM_clarke} approximate the trajectories of \cref{eq:SSMcont}, under assumptions (1) and (2) of \cref{prop:SSM_clarke_cv}; see e.g. \cite[Th. 2]{duchi2018stochastic} and \cite{benaim2006stochastic,borkar2009stochastic}.
  It thus remains to show that the trajectories of \cref{eq:SSMcont} have the desired properties, namely that they converge to points $\bar{x}$ such that $0 \in \partial^{c} f(\bar{x})$.
  Formally, this is done by proving that $f$ is a~so-called \emph{Lyapunov function} for the trajectories of \cref{eq:SSMcont}, i.e., $f$ fulfils:
  \begin{enumerate}
    \item\label{it:dense0} {\bf (Weak Sard)} For a dense set of values $r\in \bbR$, the set $\{x : f(x) = r \text{ and } 0 \in \partial^{c} f(x)\}$ is empty.
    \item\label{it:descent} {\bf (Descent)} For every
    absolutely continuous curve $x$ satisfying \cref{eq:SSMcont} such that $0 \notin \partial^{c} f(x(0))$, there exists a real $T>0$ satisfying
      \begin{equation*}%
        f(x(T))<\sup_{t\in [0,T]} f(x(t)) \leq f(x(0)).
      \end{equation*}
  \end{enumerate}

  O-minimality is key to proving the Weak Sard and Descent properties for functions $f$ matching realistic settings, such as training of DNNs.
  Indeed, o-minimality provides an easy-to-check criterion, formalized by the composability
  (\cref{definable_composability}) and the guarantee that $f$ admits a~stratification (\cref{lmm:stratfun}).

  We first consider the Weak Sard property, and begin with a general observation.
  Suppose $r\geq 1$ is arbitrary and $\frak{S}$ is an arbitrary definable $C^r$-stratification as provided by the \cref{lmm:stratfun}.
  Then \cref{cor:projformula} implies that the set of Clarke critical points of $f$ are a subset of the union of all critical points of the $f|_M$, for $M\in\frak{S}$.
  Therefore, the Clarke critical values will be a subset of the union of all critical values of $f|_M$, as $M$ ranges over $\frak{S}$.
  
Next, we want to apply the Classical Sard's Theorem~\cite{sardMeasureCriticalValues1942}.
In order to do this, we choose $r\geq 1$ sufficiently large ($r\geq n$ will suffice), and choose a definable $C^r$-stratification $\frak{S}$ as above. By our choice of $r$, the Classical Sard's Theorem tells us for each $M\in\frak{S}$ that the set of critical values of $f|_M$ is ``small'' (Lebesgue measure zero). Since $f|_M$ is definable and $C^r$, the set of its critical points is definable; hence its image, the set of critical values, is definable. Thus by the \cref{th:smallsets} it follows that $f|_M$ has only finitely many critical values. By the above observation, it follows that the set of Clarke critical values of $f$ is also finite, hence its complement is dense, as required by Weak Sard.
We refer to \cite[Cor. 9(ii)]{bolte2007clarke}, and \cite[Lem. 5.7]{davis2020stochastic} for other proofs.

  To prove the Descent property, recall that \cref{lemma:subgradientdescent} shows that a~trajectory $x$ of \cref{eq:SSMcont} satisfies, for any $t \ge 0$,
  \begin{equation}\label{eq:nsintegral}
        f(x(t)) = f(x(0)) - \int_0^t \dist^2(0, \partial^c f(x(\tau))) d\tau
  \end{equation}
    and, in particular, $f\circ x$ is non-increasing.
    Assume in addition that $0 \not\in -\partial^c f(x(0))$.
    Then by \cref{prop:clarke} \ref{it:uppersemi},
    there exists $T>0$ and $\eta>0$ such that $\operatorname{dist}^2(0,\partial^cf(x(t)))\geq\eta$ for all $t\in [0,T]$.
    Therefore, $\int_0^t \dist^2(0, \partial^c f(x(\tau))d\tau$ increases strictly for $t \in [0, T]$.
    In view of \cref{eq:nsintegral}, $f\circ x$ is strictly decreasing over $[0, T]$.
    Hence, the Descent property holds.

  Consequently, $f$ is a continuous, lower-bounded function that meets the Weak Sard and Descent properties.
  Thus, Theorem 3.2 from Davis \emph{et al} \cite{davis2020stochastic} applies and provides that any limit point $\bar{x}$ of $(x_k)$ is such that $0 \in \partial^c f(\bar{x})$ and the sequence $(f(x_k))$ converges.
\end{proof}

\subsection{The broader picture around Stochastic Subgradient Method}
\label{sec:remarks}

We presented above a convergence proof for a simplified version of Stochastic Subgradient Method, described in \cref{eq:SSM_clarke}.
Here, we discuss how the gap between this simplified setting and practice can be bridged.

\subsubsection{On the need to account for nonsmoothness}
\label{sec:needtoaccountforns}
  Rademacher's theorem states that, if a~function is Lipschitz continuous, then the set of points where it is nonsmooth has measure zero.
  Therefore optimization algorithms are unlikely to encounter such points--especially so when the iteration features noise, as is the case of \cref{eq:SSM_clarke}.
  One might then be led to give credit to the following statement, or a variation of it.
  \begin{quote}
  \emph{
      Since with probability one, the function is differentiable at iterates, everything happens as if the function was differentiable, and thus smooth convergence arguments apply directly.
      }
  \end{quote}
  Unfortunately, things are not so simple.
  Indeed, methods for smooth optimization have vastly different behaviors on nonsmooth functions, as we now illustrate.
  \Cref{fig:labela} displays the behavior of SSM and two other optimization methods on a particular nonsmooth function.
  Overall, the behavior of the methods is driven by the nonsmoothness of the function (its stratification), and particularly so by $\M^\star$, the stratum that contains the minimizer.
  SSM incurs strong oscillations perpendicular to $\M^\star$.
  The BFGS method, a~state-of-the-art method for minimizing $C^{1}$ functions with superlinear convergence speed, incurs oscillations and ultimately converges linearly at best on nonsmooth functions \cite{lewisNonsmoothOptimizationQuasiNewton2013}.
  The proximal gradient method can be applied only to a restricted class of nonsmooth functions, including that of \cref{fig:labela} \cite{beckFirstOrderMethodsOptimization2017}.
  Contrary to SSM and BFGS, it generates iterates that are attracted to nonsmooth points -- the cartesian axes here.
  Even more, the iterates belong to $\M^\star$ after a finite number of iterations; this can be used to accelerate the proximal gradient method \cite{bareillesNewtonAccelerationManifolds2022}.
  This illustrates the need to account for nonsmoothness when analyzing optimization methods.

\begin{figure}[t]
  \centering
  \resizebox{0.48\textwidth}{!}{
    \tikzsetnextfilename{Lasso_pgnsbfgs}
    \input{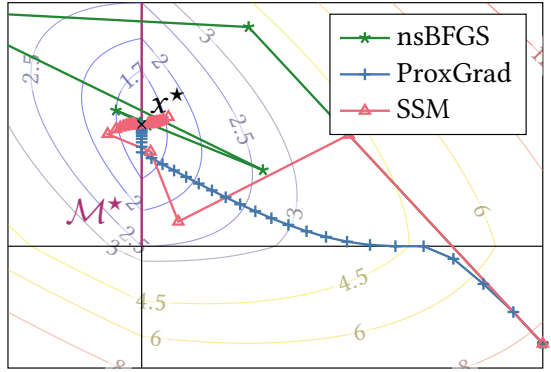}
  }
  \caption{\label{fig:labela}Illustration of \Cref{sec:needtoaccountforns}. The behavior of optimization algorithms is driven by the optimal stratum $\opt[\M] = \{ (0, u)^\top \;:\; u>0 \}$. Here, we show the iterates of noiseless full-batch SSM \eqref{eq:SSM_clarke}, the nonsmooth-BFGS \citep{lewisNonsmoothOptimizationQuasiNewton2013} and the Proximal Gradient \cite[Chap. 10.4]{beckFirstOrderMethodsOptimization2017} algorithms, on a~LASSO-type objective function $F(x) = \|Ax-b\|^{2}_{2} + \lambda \|x\|_{1}$. }
\end{figure}

\subsubsection{From Clarke critical points to local minimizers}
\label{rem:clarkeAD}
  \Cref{prop:SSM_clarke_cv} shows that the iterates of SSM converge to the Clarke critical points.
  Such points include local minima, but also saddle points which may trap iterates for a number of iterations.
  Davis \emph{et al.} \cite{davis2025active} prove that SSM avoids, or more precisely, eventually escapes from neighborhoods of, saddle points.
  Stratifications are once more instrumental to this result.

\subsubsection{From full-batch to small-batch iteration}
\label{rem:batch}
  The above exposition focused on SSM in the simple setting of \cref{eq:SSM_clarke}, which assumes that a subgradient of $f$ is computed at each iteration.
  Using stochastic approximation theory \cite{duchi2018stochastic,benaim2006stochastic,borkar2009stochastic}, this setting extends to
  \begin{equation}\label{eq:SSM_smallbatch}
    x_{k+1} = \curr + \curr[\step] (\curr[y] + \curr[\xi]), \quad \text{ for } \curr[y] \in - \partial^c f(\curr),
  \end{equation}
  where $\xi_{k}$ is a random variable with zero mean, and typically bounded variance.
  In the context of training DNNs, where $f$ is written  as a large sum (recall \eqref{eq:learnpb}), the extension \eqref{eq:SSM_smallbatch} accounts for the more realistic version of \eqref{eq:SSM_clarke} where $y_{k}$ is computed relative to one or a few elements of the sum, as opposed to all elements of the sum.
  This is the small-batch setting, as opposed to the full-batch setting of \eqref{eq:SSM_clarke}.

\begin{figure}[t]
  \begin{center}
    \includegraphics[width=0.5\textwidth]{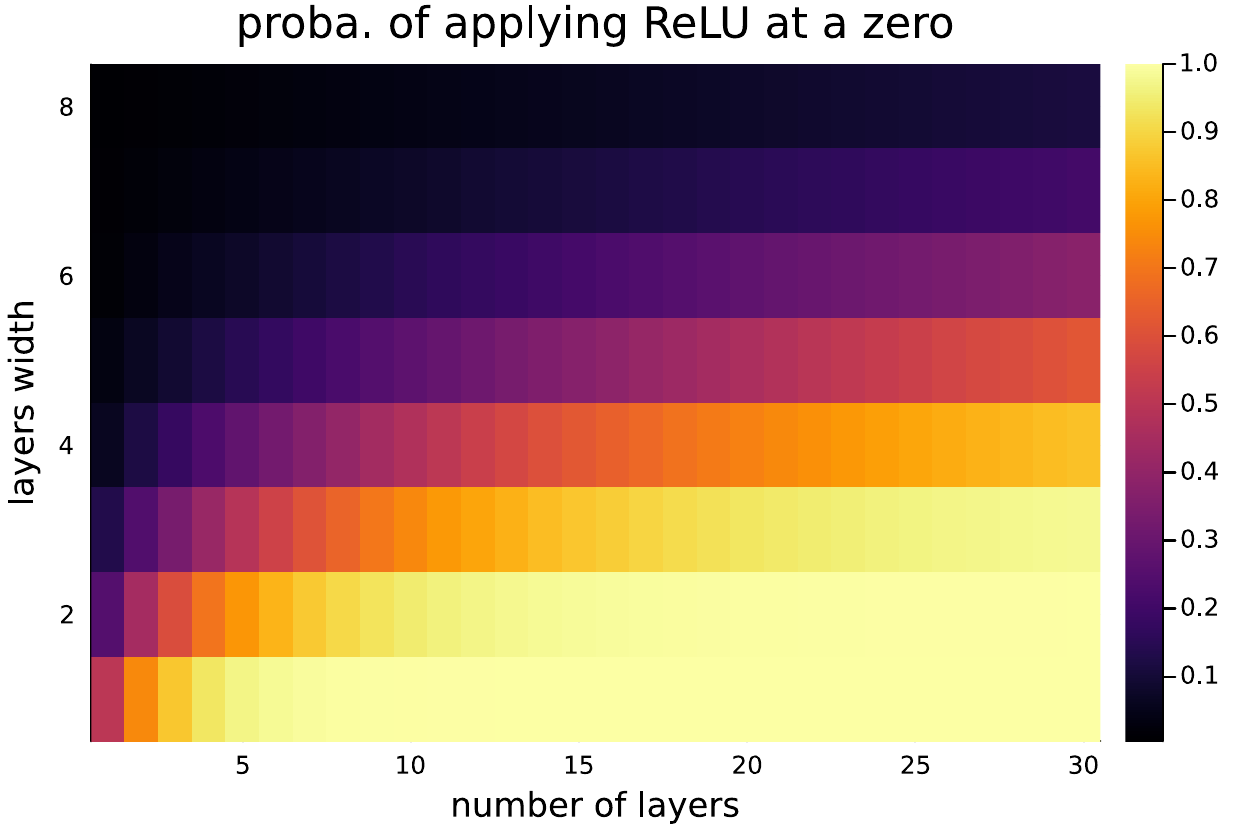}
    \caption{
      Chances of evaluating an activation function at a nonsmooth point for feedforward ReLU Neural Networks of varying number of layers and layer width, for random weights and evaluation points. Probabilities are computed with $10^{5}$ samples.
    }
    \label{fig:ReLUactivity}
  \end{center}
\end{figure}

\subsubsection{Automatic Differentiation of definable mappings}
\label{sec:remarkAD}

\noindent
In order to implement the SSM iteration \cref{eq:SSM_clarke}, one must compute an element $y_k$ of the (negation of the) Clarke subdifferential of $f$ at $x_k$.
Computing such a ``derivative'' is a challenging task, especially so in the context of training DNNs, where the function to optimize involves a deep neural network, which itself is a large composition of elementary functions; see \cref{eq:defDNN,eq:learnpb}.
We now discuss some aspects of this challenging task.

Automatic Differentiation (AD) groups methods that describe how to compute the derivative of a function that is written as a composition of elementary functions with known derivatives.
To fix ideas, consider a function
\begin{equation}
  \label{eq:compo}
  f = g_1 \circ \dots \circ g_m,
\end{equation}
where $g_1$, \dots, $g_m$ are elementary functions.
Note that this covers the training of DNNs, as sketched in \cref{eq:defDNN,eq:learnpb} in the introduction.
AD methods essentially implement the chain rule: assuming that $g_1$ and $g_2$ are two differentiable mappings, their composition is also differentiable, and their Jacobians\footnote{The Jacobian is a natural generalization of the derivative to multivariate functions; see \cref{def:Frechet}.} satisfy $J_{g_1\circ g_2}(x) = J_{g_1}(g_2(x)) J_{g_2}(x)$.
This extends directly to the composition of $m$ functions.
Thus, AD methods are guaranteed to exactly compute the Jacobian $J_f(x)$ as soon as all the elementary functions $g_i$ are differentiable; see \eg{} \cite{griewankEvaluatingDerivativesPrinciples2008} and references therein.

At this stage, one may be led to give credit to the following statement:
\begin{quote}
  \emph{Given any differentiable function $f$, say with a compositional expression as per \cref{eq:compo}, an AD method provides the correct derivative at any point.}
\end{quote}
Again, things are more complex, as illustrated by the following examples. We denote by $x^+=\max(0,x)$ the ReLU operator.
\begin{itemize}
  \item Consider the \emph{smooth} function defined by $f(x) = ((-x)^+ + x) - x^+$, and an AD algorithm which provides the correct derivative of $x \mapsto x^+$ when $x\neq 0$, and the value $0$ when $x = 0$. A reasonable choice, since $\partial^c (\cdot)^+ (0) = [0, 1]$.
  Then, $f$ is the null function, and yet the AD algorithm specified above would provide derivative $1$ at $x = 0$; see \cite{bolte2020mathematical} for details.
  Thus, AD on a smooth function may fail to compute the correct derivative.
  \item When $f$ is \emph{nonsmooth}, the situation is similar. Consider $f(x) = x^+ - \tfrac{1}{2}(-x)^+$. This function is nonsmooth at $0$, with Clarke subdifferential $\partial^c f(0) = [\tfrac{1}{2}, 1]$.
  The same AD algorithm as above would provide derivative $0$ at $x = 0$.
  \item To complete the picture, we reproduce in \cref{fig:ReLUactivity} the experiment of \cite{bolteConservativeSetValued2021} illustrating that, given a random point and a feedforward ReLU-based neural network with random weights, there is a positive probability that some ReLU be evaluated at a nonsmooth point.
\end{itemize}
We refer to \cite{bolte2020mathematical,kakadeProvablyCorrectAutomatic2018,lee2020correctness} for other problematic examples.
The bottom line is this: as soon as a function $f$ contains a nonsmooth component $g_i$, the output of an AD method need not correspond to the usual derivative, be it the Euclidean gradient when $f$ is smooth, or an element of the Clarke subdifferential when $f$ is nonsmooth.

At this stage, it should be clear that the behavior of AD on functions that feature nonsmooth components is surprisingly not straightforward, and deserves investigation.
To address this issue, Bolte and Pauwels introduced in 2020 a theory that formalizes AD methods as instances of \emph{Conservative Fields} \cite{bolte2020mathematical,bolteConservativeSetValued2021}.
One key takeaway is that any definable Conservative Field applied to a definable function returns the gradient of the function almost everywhere; see Th.~8, Coro.~5 and Rem.~11 in \cite{bolte2020mathematical}.
Besides, any convex-valued conservative field contains the Clarke subdifferential.
Another notable feature is that \cref{cor:projformula} for Clarke's subdifferential extends to conservative fields: Lewis and Tian \cite[Th. 2.2]{lewisStructureConservativeGradient2021} show that, for a conservative field of a function, there exists a Whitney stratification of that function (\cref{def:stratifVerdier}) such that, at any point, the conservative field is contained in the sum of the Clarke subdifferential of the function with the normal space to the stratum at that point.
Finally, this theory allows one to give convergence guarantees for AD-based implementations of SSM, such as in PyTorch \cite{paszke2017automatic} or TensorFlow \cite{abadi2016tensorflow}, thus bridging the gap between the convergence result detailed above and actual implementations; see \eg{} \cite[Th.~4]{bolte2020mathematical}.

In conclusion, the definability of the functions provides a notion of \emph{chain rule} for a variety of notions of derivatives.
Such notions include the Clarke subdifferential, recall \cref{lemma:chainrule}, and AD methods, see \eg{} \cite[Th. 3]{bolte2020mathematical} and \cite[Coro. 5]{bolteConservativeSetValued2021}.
This observation provides a theoretical understanding of AD, which is a solid theoretical foundation to develop convergence guarantees for AD-based methods, under a realistic set of assumptions on the functions and algorithms.

\section{Conclusion}

\noindent
In this expository document, we have discussed ways in which tame geometry provides a good framework for the study of optimization and deep learning: it is both realistic, in that it encompasses current deep-learning architectures, and prolific, in that it allows one to develop nontrivial theoretical guarantees that are relevant in~practice.

\medskip\noindent
The realism of o-minimality is shown by the fact that a vast majority of DNNs are definable in an o-minimal structure.
This follows notably from the remarkable composability guarantees in tame geometry.
In stark contrast, all modern theoretical frameworks do not account for DNNs, even simple ReLU-based feedforward MultiLayer Percetprons.
Such frameworks include: convexity, weak convexity, smoothness (e.g., continuous differentiability with Lipschitz first derivative), and subdifferential regularity.
Yet, the activation and loss functions encountered in the training of Deep Neural Networks (DNNs) (see \cref{table:definable} and \cref{400_AFs_remark}) feature nonconvexity and nonsmoothness.
Specifically, DNNs are in general not differentiable, so the smoothness assumption does not apply (gradients are discontinuous; think of absolute value), and feature nonsmoothness that cannot be accounted for by a convexity assumption, or any weakened form (e.g., weak convexity, prox-regularity, or even Clarke regularity; think of minus absolute value).

\medskip\noindent
The prolific nature, while not yet fully realized, is also becoming apparent.
While optimization over trigonometric functions, for instance, is not decidable even in the box-constrained case, see \eg{} \cite[3.5.5]{liberti2019undecidability}, optimization in tame geometry is computable, at least in the sense of first-order critical points, both in theory and practice.
The chain rule on some generalized derivatives enables automatic differentiation in PyTorch or TensorFlow. It is consequently a key component for Deep Learning practice, while at the same time intimately tied to tame geometry.
We hope that the prolific nature of o-minimality will motivate future research.

\appendix
\section{Some facts used in the proofs of \Cref{sec:descentmethods}}

\subsection{A geometric property of inner product spaces}

We provide a quick geometric fact about projection on subspaces, that is used in the proof of \cref{cor:projformula}.
For any linear subspace $V \subseteq \bbR^n$, we denote $V^\perp$ the orthogonal space to $V$, that is $V^\perp = \{ w \in \bbR^n : w \cdot v = 0 \text{ for any } v \in V \}$.
Given a vector $p\in\bbR^n$ and a subspace $V$ of $\bbR^n$, we denote by $p_V$ the orthogonal projection of $p$ on $V$.

\begin{lemma}\label{lmm:geomprojform}
  Suppose $V\subseteq\R^n$ is a linear subspace, $v\in V$ is a vector, and $C\subseteq\R^n$ satisfies
  $C\subseteq v+V^{\perp}$.
  If $\ V\cap C\neq\varnothing$, then
  \[
    (1)\ V\cap C=\{v\} \quad \text{and} \quad (2)\  \|v\|=\operatorname{dist}(0,C).
  \]
\end{lemma}
\begin{proof}
Note that for every $y \in V$, there holds $y_{V^\perp} = 0$ and
\begin{equation}\label{eq:xV}
    y = y_V.
\end{equation}

(1)
Assume $w\in V\cap C$.
Since $w\in C \subseteq v + V^\perp$, there exists $z\in V^\perp$ such that $w = v + z$. As $v\in V$, using \eqref{eq:xV} we have $v = v_V$. Combining the previous two equalities gives $w=v_V+z$, which directly implies
\begin{equation}\label{eq:wV}
    w_V = v_V.
\end{equation}
Hence, as $v\in V$ and $w\in V$,
we have $w\overset{\eqref{eq:xV}}{=}w_V\overset{\eqref{eq:wV}}{=}v_V\overset{\eqref{eq:xV}}{=} v$. Consequently,
$V\cap C=\{v\}$.

(2) Let $x\in C$ be arbitrary and take $z\in V^{\perp}$ such that $x=v+z$. Note that:
\[
\|x\|^2 \ = \ \|v+z\|^2 \ = \ \|v\|^2+2(v\cdot z)+\|z\|^2 \ = \ \|v\|^2+\|z\|^2 \ \geq \ \|v\|^2.
\]
Thus $\operatorname{dist}(0,C)\geq \|v\|$. Since $V\cap C\neq \varnothing$, we have $v\in C$ by (1). Thus $\|v\|=\operatorname{dist}(0,C)$.
\end{proof}

\subsection{Differentiability and first-order approximations of continuous curves.}
\label{sec:appx_Frechet}
In this section, we recall the definition of (Fréchet) differentiability, and prove some basic facts on first-order approximations of nonsmooth continuous curves.
The main result is \cref{lmm:foapp}, used in the proof of \cref{lemma:chainrule}.

Throughout this section, we consider two functions $f,g:U\to\R$, where $U$ denotes an open subset of $\bbR^\ndim$, and a point $x_0\in U$.

\begin{definition}\label{def:foapp}
  We say that $g$ is a \textbf{first-order approximation} of $f$ at $x_0$ if:
  \[
    f(x_0)=g(x_0) \quad \text{and} \quad\lim_{x\to x_0}\frac{|f(x)-g(x)|}{|x-x_0|} \ = \ 0.
  \]
\end{definition}

\begin{remark}\label{rem:first_order_approx}
The notion ``$g$ is a first-order approximation of $f$ at $x_0$'' defines an equivalence relation on $\R$-valued functions defined on neighborhoods of $x_0$ in $U$.
\end{remark}

\begin{definition}\label{def:Frechet}
  We say that $f:U\to\bbR^m$ is \textbf{(Fr\'{e}chet) differentiable} at $x_0$ if there exists a linear map $\linearmap:\R^\ndim\to\R^m$ such that $f(x_0)+\linearmap (x-x_0)$ is a first-order approximation of $f$ at $x_0$.
  In this case, we define the \textbf{Jacobian} $J_f(x_0)$ of $f$ at $x_0$ as the matrix of the linear mapping $\linearmap$, so that $J_f \cdot d = \linearmap(d)$ for every $d\in\R^\ndim$.
  When $m=1$, we rather consider the \textbf{gradient} $\nabla f(x_0)$ of $f$ at $x_0$, defined as the unique vector in $\R^\ndim$ such that $\nabla f(x_0)^Td=\linearmap (d)$ for every $d\in\R^\ndim$.
\end{definition}

Note that, in particular, $f$ is a first-order approximation of $g$ at $x_0$ as soon as $f$ and $g$ are differentiable at $x_0$, and $f'(x_0)=g'(x_0)$.

We are now ready to state the main result of this section.
\begin{lemma}\label{lmm:foapp}
  Consider $t_0\in\R$ and $\varepsilon>0$, two curves $x,\gamma:(t_0 -\varepsilon,t_0 + \varepsilon)\to\R^\ndim$, and a function $f:\bbR^\ndim\to\bbR$. If
    $x$ and $\gamma$ are continuous at $t_0$,
    $x$ is a first-order approximation of $\gamma$ at $t_0$, and
    $f$ is locally Lipschitz at $x_0:=x(t_0)=\gamma(t_0)$,
  then
  \begin{enumerate}
    \item $f\circ x$ is a first-order approximation of $f\circ\gamma$ at $t_0$ (and thus also vice-versa); \label{it:firstorderapp}
    \item the following are equivalent: \label{it:Fdiff}
      \begin{enumerate}
        \item $f\circ x$ is differentiable at $t_0$,
        \item $f\circ \gamma$ is differentiable at $t_0$.
      \end{enumerate}
      Moreover, if either of the above holds, then $(f\circ x)'(t_0)=(f\circ\gamma)'(t_0)$.
  \end{enumerate}
\end{lemma}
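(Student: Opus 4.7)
The plan is to prove part (\ref{it:firstorderapp}) directly from the local Lipschitz property of $f$ at $x_0$, and then obtain part (\ref{it:Fdiff}) as a formal consequence of part (\ref{it:firstorderapp}) combined with the transitivity of ``being a first-order approximation of'' noted after \cref{def:foapp}.

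For part (\ref{it:firstorderapp}), I would first observe that $(f\circ x)(t_0)=f(x_0)=(f\circ\gamma)(t_0)$, so the pointwise agreement at $t_0$ is immediate and only the limit condition needs work. Pick $L,\delta>0$ such that $|f(u)-f(v)|\le L|u-v|$ for all $u,v\in B(x_0,\delta)$, and use continuity of $x$ and $\gamma$ at $t_0$ to find $\eta>0$ such that $x(t),\gamma(t)\in B(x_0,\delta)$ whenever $|t-t_0|<\eta$. On this neighborhood,
\[
\frac{|f(x(t))-f(\gamma(t))|}{|t-t_0|} \ \le \ L\cdot\frac{|x(t)-\gamma(t)|}{|t-t_0|} \ \xrightarrow[t\to t_0]{} \ 0,
\]
by the hypothesis that $x$ is a first-order approximation of $\gamma$ at $t_0$. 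This proves that $f\circ x$ is a first-order approximation of $f\circ\gamma$ at $t_0$; the ``vice versa'' clause follows because the right-hand side of the displayed bound is symmetric in $x$ and $\gamma$.

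For part (\ref{it:Fdiff}), suppose $f\circ\gamma$ is differentiable at $t_0$ with derivative $a\in\R$, i.e., the affine function $\ell(t):=f(\gamma(t_0))+a(t-t_0)$ is a first-order approximation of $f\circ\gamma$ at $t_0$. Part (\ref{it:firstorderapp}) gives that $f\circ x$ is also a first-order approximation of $f\circ\gamma$ at $t_0$. Since first-order approximation is an equivalence relation on $\R$-valued functions defined near $t_0$, transitivity yields that $\ell$ is a first-order approximation of $f\circ x$ at $t_0$ as well (note $\ell(t_0)=f(\gamma(t_0))=f(x(t_0))$), so $f\circ x$ is differentiable at $t_0$ with derivative $a$. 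The reverse implication follows by exchanging the roles of $x$ and $\gamma$. The proof is largely bookkeeping; the only place any care is needed is in aligning the neighborhoods so the Lipschitz estimate can be applied to the quotient uniformly for $t$ near $t_0$, which is why we invoke continuity of both curves before passing to the limit.
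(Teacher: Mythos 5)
Your proof is correct and follows essentially the same route as the paper's: part (1) via the local Lipschitz bound $|f(x(t))-f(\gamma(t))|\le L|x(t)-\gamma(t)|$ combined with the first-order approximation hypothesis, and part (2) by invoking that first-order approximation is an equivalence relation so the affine approximant transfers between $f\circ x$ and $f\circ\gamma$. Your write-up is in fact slightly more careful than the paper's in making the choice of neighborhoods for the Lipschitz estimate explicit.
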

\begin{proof}
  (\ref{it:firstorderapp}) Let $L$ be a Lipschitz modulus of $f$ around $x_0$. Then for $t$ sufficiently close to $t_0$, we have: $x(t)$ and $\gamma(t)$ are sufficiently close to $x_0$ (by continuity)
  and thus we have:
  \[
    \frac{|f(x(t))-f(\gamma(t))|}{|t-t_0|} \ \leq \ \frac{L|x(t)-\gamma(t)|}{|t-t_0|} \ \to \ 0 \quad\text{as} \quad t \ \to \ t_0
  \]
  since $x$ is a first-order approximation of $\gamma$ at $t_0$.

  (\ref{it:Fdiff}) If either (a) or (b) holds, then there is a linear map $\linearmap:\R\to\R$ such that $f(x_0)+\linearmap(t-t_0)$ is a first-order approximation of both $f\circ x$ and $f\circ \gamma$ at $t_0$ (using (1) and \cref{rem:first_order_approx}). This linear map is the derivative of both functions at $t_0$.
\end{proof}

\section*{Acknowledgments}
This work has received funding from the European Union’s Horizon Europe research and innovation programme under grant agreement No. 10107056 (Human-compatible AI with Guarantees).

\printbibliography

\end{document}